\newtheorem{thm}{Theorem}[section]
\newtheorem{cor}[thm]{Corollary}
\newtheorem{lem}[thm]{Lemma}
\newtheorem{prop}[thm]{Proposition}
\newtheorem{conj}[thm]{Conjecture}
\newtheorem*{thm*}{Theorem}
\newtheorem*{prop*}{Proposition}
\newtheorem*{cor*}{Corollary}
\newtheorem*{conj*}{Conjecture}
\theoremstyle{definition}
\newtheorem{defn}[thm]{Definition}
\theoremstyle{remark}
\newtheorem{ex}[thm]{Example}
\newtheorem{rem}[thm]{Remark}
\newtheorem{conv}[thm]{Convention}
\newcommand{\ka}{{\mathcal A}}
\newcommand{\kb}{{\mathcal B}}
\newcommand{\kc}{{\mathcal C}}
\newcommand{\kd}{{\mathcal D}}
\newcommand{\ke}{{\mathcal E}}
\newcommand{\kf}{{\mathcal F}}
\newcommand{\kl}{{\mathcal L}}
\newcommand{\km}{{\mathcal M}}
\newcommand{\ko}{{\mathcal O}}
\newcommand{\kp}{{\mathcal P}}
\newcommand{\kt}{{\mathcal T}}
\newcommand{\IC}{{\mathbb C}}
\newcommand{\IH}{{\mathbb H}}
\newcommand{\IP}{{\mathbb P}}
\newcommand{\IR}{{\mathbb R}}
\newcommand{\IZ}{{\mathbb Z}}
\newcommand{\gu}{\mathfrak{U}}
\newcommand{\fa}{{\mathfrak a}}
\newcommand{\fg}{{\mathfrak g}}
\newcommand{\fk}{{\mathfrak k}}
\newcommand{\fm}{{\mathfrak m}}
\newcommand{\curly}[1]{\mathcal{#1}}
\newcommand{\DR}{\mathrm{\mathbb R}} % Derived functors
\newcommand{\DL}{\mathrm{\mathbb L}}
\newcommand{\vphi}{\varphi}
\newcommand{\eps}{\varepsilon}
\newcommand{\id}{{\rm id}}
\newcommand{\tensor}{\otimes}
\newcommand{\ra}{\rightarrow}
\newcommand{\lra}{\longrightarrow}
\newcommand{\isom}{\cong}
\newcommand{\del}{\partial}
\newcommand{\union}{\cup}
\newcommand{\dotcup}{\ensuremath{\mathaccent\cdot\cup}}
\newcommand{\dunion}{\dotcup}
\newcommand{\xlra}[1]{\overset{#1}{\lra}}
\newcommand{\sra}{\xlra{\sim}}
\newcommand{\qtext}[1]{\quad\text{#1}\quad}
\newcommand{\stext}[1]{\;\text{#1}\;}
\newcommand{\Set}[2]{\left\{\, #1 \;|\; #2 \,\right\}}
\DeclareMathOperator{\td}{td}
\DeclareMathOperator{\pr}{pr}
\newcommand{\HH}{\mathrm{H}}
\newcommand{\half}{\frac{1}{2}}
\newcommand{\<}{\langle}
\renewcommand{\>}{\rangle}
\newcommand{\vsum}{\oplus}
\newcommand{\inj}{\hookrightarrow}
\newcommand{\surj}{\twoheadrightarrow}
\renewcommand{\HH}{H}
\newcommand{\dual}{\hspace{0cm}^\vee}
\renewcommand{\forall}{\text{ for all }}
\newcommand{\GLT}{\widetilde{Gl}_2^+(\IR)}
\newcommand{\GL}{Gl_2^+(\IR)}
\newcommand{\WT}{\widetilde{W}}
\newcommand{\HT}{\tilde{\HH}}
\newcommand{\NDel}{\Delta^{>0}}
\newcommand{\limt}{\underset{t \ra \infty}{\lim}}
\newcommand{\KM}{\overline{KM}}
\newcommand{\pd}{\mathfrak{D}} % Symbol for the period domain
\newcommand{\cd}{\mathcal{D}} % Symbol for the derived category
\title[Cusps and stability conditions]{Cusps of the K\"ahler moduli space and stability conditions on K3 surfaces}
\author{Heinrich Hartmann}
\date{\today}
\newcommand{\settocdepth}[1]{
\addtocontents{toc}{\protect\setcounter{tocdepth}{#1}}}
\begin{document}

\begin{abstract}
  In \cite{Ma1} S. Ma established a bijection between Fourier--Mukai partners 
  of a K3 surface and cusps of the K\"ahler moduli space. 
  The K\"ahler moduli space can be described as a quotient of Bridgeland's
  stability manifold. We study the relation between stability
  conditions $\sigma$ near to a cusp and the associated Fourier--Mukai
  partner $Y$ in the following ways.
  (1) We compare the heart of $\sigma$ to the 
  heart of coherent sheaves on $Y$. (2) We construct $Y$ as moduli
  space of $\sigma$-stable objects.
    
  An appendix is devoted to the group of auto-equivalences of $\kd^b(X)$
  which respect the component $Stab^\dagger(X)$ of the stability
  manifold.
\end{abstract}

\maketitle

\settocdepth{1}

\tableofcontents 
\hspace{1cm}

\section{Introduction}

\label{sec:Introduction}

Let $X$ be a projective K3 surface over the complex numbers, and let
$\kt = \cd^b(X)$ be the bounded derived category of coherent sheaves
on $X$.

We associate to $\kt$ the complexified K\"ahler moduli space $KM(\kt)$ by the following
procedure. Let $N(\kt)$ be the numerical Grothendieck group of $\kt$ endowed with
the (negative) Euler pairing. We consider the following period domain 
\[ \pd(\kt)=\{ [z] \in \IP(N(\kt)_{\IC}) \,|\, z.z = 0, z.\bar{z}>0 \}. \]
and define $KM(\kt)$ to be a connected component of $Aut(\kt) \setminus \pd(\kt)$.
The image of $Aut(\kt)$ in the orthogonal group $O(N(\kt))$ is known
by \cite{HuybrechtsMacriStellariOrientation2009}. In particular it is
an arithmetic subgroup.
Therefore, we can compactify the K\"ahler moduli space to a projective variety 
$\overline{KM}(\kt)$ using the Baily--Borel construction \cite{BailyBorel}.

The boundary $\overline{KM}(\kt) \setminus KM(\kt)$ consist of
components, called cusps, which are divided into the following types (cf.\ section \ref{sec:Ma}):
\begin{itemize}
  \item $0$-dimensional standard cusps,
  \item $0$-dimensional cusps of higher divisibility and
  \item $1$-dimensional boundary components.
\end{itemize}
In \cite{Ma1}, \cite{Ma2}  Shouhei Ma establishes a bijection between
\[   \left\{ \begin{array}{c} \text{ K3 surfaces $Y$ } \\ 
    \text{ with $\cd^b(Y) \isom \kt$ }\end{array} \right\}
\longleftrightarrow \left\{ \begin{array}{c} \text{ 
      standard cusps of the } \\ 
\text{  K\"ahler moduli space $KM(\kt)$ }
\end{array} \right\}. \tag{*} \]
Moreover, cusps of higher divisibility correspond to realizations of
$\kt$ as the derived category of sheaves on a K3 surface twisted by a
Brauer class.  Unfortunately the proof is not geometric but uses deep
theorems due to Mukai and Orlov to translate the statement into lattice
theory.

The aim of this work is to find a more geometric explanation for this
phenomenon using Bridgeland stability conditions
\cite{BridgelandStability}, \cite{BridgelandK3}.  The space
$Stab(\kt)$ of Bridgeland stability conditions on $\kt$ is a complex
manifold and carries canonical actions of $Aut(\kt)$ and of the universal
cover $\GLT$ of $\GL$.  
For each pair of $\omega \in Amp(X)$ and $\beta \in NS(X)_{\IR}$ with
$\omega^2>2$ Bridgeland constructs an explicit  stability conditions
$\sigma_{X}(\beta,\omega) \in Stab(\kt)$. Denote by
$Stab^\dagger(\kt)$ the connected component of $Stab(\kt)$ containing
these stability conditions.

A special open subset of $KM(\kt)$ can be identified with the quotient
space
\begin{align}\label{KM0_formula} 
  KM_0(\kt) \isom Aut^\dagger(\kt) \setminus Stab^\dagger(\kt) /\GLT, 
\end{align}
where $Aut^\dagger(\kt)$ is the group of auto-equivalences respecting
the distinguished component $Stab^\dagger(\kt)$. This statement is
essentially due to Bridgeland and was stated in \cite{Ma1} and
\cite{BridgelandSurvey} before.  However, it seems to rely on
properties of the group $Aut^\dagger(\kt)$ which are established in
appendix \ref{sec:aut_dagger}, cf. Corollary \ref{double_quot}.  We
denote the quotient map by $\pi: Stab^\dagger(\kt) \ra KM(\kt).$

\subsection{Hearts of stability conditions} 
Our first result addresses the following question:
Every stability condition $\sigma$ determines a heart $\ka(\sigma)$ of a bounded
t-structure. Also, every derived equivalence $\Phi:\kd^b(Y) \sra \kt$
determines the heart $\Phi(Coh(Y))$. How are these
two hearts related for $\sigma$ near the cusp  associated to $Y$?

\begin{thm*}[\ref{limiting_hearts}]
  Let $[v] \in \KM(\kt)$ be a standard cusp, and $Y$ the K3 surface
  associated to $[v]$ by (*).
  Then, there exists a path $\sigma(t) \in Stab^\dagger(\kt), t \gg 0$ 
  and an equivalence $\Phi: \cd^b(Y) \sra \kt$ such that
  \begin{enumerate}
  \item $\limt \pi(\sigma(t)) = [v] \in \KM(\kt)$ and
  \item $\limt \ka(\sigma(t)) = \Phi( Coh(Y) )$  as subcategories of $\kt$.
  \end{enumerate}
\end{thm*}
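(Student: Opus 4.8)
The plan is to transport the whole picture to the Fourier--Mukai partner $Y$ and there approach the large volume limit along a suitably chosen ray. By Ma's correspondence (*) the standard cusp $[v]$ singles out a K3 surface $Y$ together with a Hodge isometry $N(\cd^b(Y)) \isom N(\kt)$ sending the point class $(0,0,1)$ of $Y$ to $v$; concretely $Y$ is a moduli space of stable sheaves with Mukai vector $v$, and a skyscraper $\ko_y$ is carried to the corresponding stable sheaf, so the point class maps to $v$. By the theorems of Mukai and Orlov this isometry is induced by a derived equivalence, and using the orientation results of \cite{HuybrechtsMacriStellariOrientation2009} together with the analysis of $Aut^\dagger$ in the appendix (cf.\ Corollary \ref{double_quot}) I would arrange the equivalence $\Phi \colon \cd^b(Y) \sra \kt$ to carry the distinguished component $Stab^\dagger(\cd^b(Y))$ onto $Stab^\dagger(\kt)$. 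It then suffices to produce on $Y$ a path $\tau(t) \in Stab^\dagger(\cd^b(Y))$ approaching the point-class cusp with $\ka(\tau(t)) \to Coh(Y)$, and to set $\sigma(t) = \Phi(\tau(t))$.

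For the path I would take Bridgeland's geometric stability conditions $\tau(t) = \sigma_Y(t\beta_0, t\omega_0)$, where $\omega_0 \in Amp(Y)$ and $\beta_0 \in NS(Y)_\IR$ is chosen with $\omega_0 \cdot \beta_0 < 0$; since $(t\omega_0)^2 > 2$ for $t \gg 0$, these lie in $Stab^\dagger(\cd^b(Y))$. Scaling $\beta$ as well as $\omega$ is essential: along the pure ray $\sigma_Y(\beta, t\omega)$ the defining torsion pair, and hence the heart, is invariant under $t$, so the heart would never reach $Coh(Y)$. The central charge of $\tau(t)$ is $\langle \exp(t(\beta_0 + i\omega_0)), - \rangle$, and the period vector $\exp(t(\beta_0 + i\omega_0)) = (1,\, t(\beta_0 + i\omega_0),\, \tfrac{t^2}{2}(\beta_0 + i\omega_0)^2)$ projectivizes, after dividing by $t^2$, to $[(0,0,1)]$ as $t \to \infty$.

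Condition (1) then follows: $\pi(\sigma(t))$ is the $Aut^\dagger$-orbit of this period point, which converges in the Baily--Borel compactification $\KM(\kt)$ to the cusp attached to the isotropic line spanned by $\Phi_*(0,0,1) = v$, namely $[v]$. For condition (2) recall that $\ka(\tau(t))$ is the tilt $\langle \kf_t[1], \kq_t \rangle$ of $Coh(Y)$, where $\kq_t$ collects torsion sheaves and $\omega$-slope semistable sheaves of slope above the threshold $t\omega_0 \cdot (t\beta_0) = t^2(\omega_0 \cdot \beta_0)$, and $\kf_t$ the remaining semistable sheaves. Since slopes scale linearly, $\mu_{t\omega_0}(E) = t\,\mu_{\omega_0}(E)$, whereas the threshold $t^2(\omega_0 \cdot \beta_0)$ tends to $-\infty$; hence every fixed sheaf lies in $\kq_t$ for $t \gg 0$, the torsion pairs are nested with $\bigcup_t \kq_t = Coh(Y)$ and $\bigcap_t \kf_t = 0$. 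An object $E$ lies in $\ka(\tau(t))$ exactly when $H^i(E) = 0$ for $i \neq -1, 0$, with $H^{-1}(E) \in \kf_t$ and $H^0(E) \in \kq_t$; imposing this for all large $t$ forces $H^{-1}(E) \in \bigcap_t \kf_t = 0$ and $H^0(E) \in \bigcup_t \kq_t = Coh(Y)$, that is $E \in Coh(Y)$. This gives $\limt \ka(\tau(t)) = Coh(Y)$, and applying $\Phi$ yields $\limt \ka(\sigma(t)) = \Phi(Coh(Y))$.

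I expect the main obstacle to be the first step rather than these computations: one must reconcile Ma's lattice-theoretic assignment of $Y$ to $[v]$ with the geometry, showing that the derived equivalence realizing the cusp isometry can be chosen compatibly with the distinguished components, so that the large volume degeneration of $Y$ is carried to $[v]$ itself and not to an $Aut$-translate or to a cusp of a different partner. This is precisely where the structure of $Aut^\dagger(\kt)$ from the appendix and the identification \eqref{KM0_formula} are needed. The heart computation, by contrast, is elementary once the ray is chosen with $\omega_0 \cdot \beta_0 < 0$; the only points requiring care are fixing the precise meaning of $\limt \ka(\sigma(t))$ as eventual membership and invoking the cohomological description of objects in a tilted heart.
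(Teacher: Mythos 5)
Your proposal follows the same two--step strategy as the paper's own proof: first reduce to the large volume limit by an equivalence obtained from realizing $Y$ as a fine, two-dimensional moduli space of Gieseker-stable sheaves with Mukai vector $v$, chosen so that it matches point class with $v$ and respects the distinguished component (this is precisely Proposition \ref{reduction_to_lvl}, which rests on Proposition \ref{sheaf_respect}; note that Corollary \ref{double_quot} is not quite the statement you need here), and then push forward an explicit ray of geometric stability conditions in which \emph{both} $\beta$ and $\omega$ are scaled, computing the limiting heart from the torsion pair. The one genuine difference is the direction of your ray, and it works in your favor. The paper takes $\sigma_Y(t\omega,t\omega)$, so the slope threshold tends to $+\infty$, and asserts that no sheaf lies in $\kt(t)$ for all $t\gg 0$; this fails for torsion sheaves, which satisfy the defining condition of $\kt(t)$ for \emph{every} $t$ (indeed skyscrapers are stable of phase one in every geometric stability condition, cf.\ Proposition \ref{stab_criterion}, so they can never leave the hearts), while torsion sheaves placed in degree $-1$ are never in $\kf(t)$. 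Hence the limit of the paper's path is the tilted heart $\Set{E}{H^{-1}(E) \text{ torsion free},\ H^{0}(E) \text{ torsion}}$, not $Coh(Y)[1]$ as claimed. Your choice $\beta_0.\omega_0<0$ drives the threshold to $-\infty$, so the torsion class exhausts $Coh(Y)$ and the free class shrinks to zero, and the limit is exactly $Coh(Y)$, as the theorem demands and without any shift. In this respect your computation is the corrected version of the paper's argument; your remark that scaling $\beta$ as well as $\omega$ is essential (otherwise the heart is constant, Remark \ref{hearts}) is exactly the right observation.

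Two smaller points of rigor. For condition (1), convergence of the projectivized central charges to $[(0,0,1)]$ in $\IP(N(\kt)_\IC)$ does not by itself give convergence in the Baily--Borel topology; you must test the path against Looijenga's neighborhood basis $U(K,v_0)$ as in the proof of Proposition \ref{lin_deg_prop}. This is straightforward for your path: $\beta_0+it\omega_0$ is a linear degeneration, hence eventually lies in every $U(K,v_0)$, and $U(K,v_0)$ is invariant under translation by $L(v_0)_\IR$, so $t\beta_0+it\omega_0$ eventually lies in it as well --- this is literally the argument the paper uses for its own path, and it should be made explicit in yours. Finally, your claim that $(t\omega_0)^2>2$ already puts $\sigma_Y(t\beta_0,t\omega_0)$ in $V(Y)\subset Stab^\dagger(\cd^b(Y))$ is correct, but it uses Lemma \ref{L_crit} (the region $y^2>2$ avoids all A-type walls), which you should cite rather than leave implicit.
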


The path in this theorem is the image of $\sigma_Y(t \beta, t \omega)$
under a certain equivalence.  It is easy to construct other paths
satisfying (1) which have limiting hearts given by tilts of $Coh(Y)$.
The natural question arises, how all limiting hearts look like.

Instead of allowing all possible paths we identify a class of paths
$\gamma(t) \in KM(\kt)$, called {\em linear degenerations to a cusp}
$[v] \in \KM(\kt)$, and restrict our attention to them. The
prototypical example of a linear degeneration is $\pi(\sigma_X(\beta,
t \omega))$. In this case the heart of $\sigma_X(\beta,
t \omega)$ is constant and given by an explicit tilt of $Coh(X)$.
We prove the following proposition.

\begin{prop*}[\ref{linear_geodesics}, \ref{lin_deg_prop}]
  Let $[v] \in \KM(\kt)$ be a standard cusp and $\gamma(t) \in KM(\kt)$ be
  a linear degeneration to $[v]$, then  $\gamma(t)$ is a geodesic
  converging to $[v]$.
\end{prop*}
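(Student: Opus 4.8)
The plan is to transport a linear degeneration into the period domain $\pd(\kt)$ and to recognize it there as a complete geodesic sitting inside a totally geodesic $\IH^2$. Recall that the quotient map $\pi$ factors through the period map $Stab^\dagger(\kt)\to\pd(\kt)$, $\sigma\mapsto[Z_\sigma]$, which is invariant under $\GLT$ and sends Bridgeland's $\sigma_X(\beta,\omega)$ to $[\exp(\beta+i\omega)]=[(1,\beta+i\omega,\tfrac12(\beta+i\omega)^2)]$. I model $\pd(\kt)$ as the space of oriented positive-definite $2$-planes in $N(\kt)_\IR$ via $[X+iY]\mapsto\mathrm{span}(X,Y)$, equipped with its $O(N(\kt)_\IR)$-invariant metric; then $KM(\kt)=\Gamma\backslash\pd(\kt)$, with $\Gamma$ the arithmetic image of $Aut^\dagger(\kt)$, carries the induced metric whose geodesics and Baily--Borel cusps are the relevant ones. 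Fix a primitive isotropic $v\in N(\kt)$ representing the cusp, choose a hyperbolic partner $f$ (so $f^2=0$, $f.v=-1$) and put $L_0=\{v,f\}^\perp$, of signature $(1,\rho-1)$; this gives the tube-domain chart $\iota_v(w)=f+w+\tfrac12 w^2 v$, whose cusp at $\mathrm{Im}\,w\to\infty$ is $[v]$. A linear degeneration to $[v]$ is the $\pi$-image of a vertical ray $w(t)=x_0+it\,y_0$ with $y_0$ in the positive cone, the prototype being $v=(0,0,1)$, $\iota_v=\exp$, $w(t)=\beta+it\omega$.

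First I would split $Z(t)=\iota_v(w(t))$ into real and imaginary parts. With $X_0:=\iota_v(x_0)$ a real isotropic vector, expanding $\tfrac12 w(t)^2$ gives
\[ \mathrm{Re}\,Z(t)=X_0-\tfrac12 t^2 y_0^2\, v,\qquad \mathrm{Im}\,Z(t)=t\,Y_1, \]
where $Y_1:=y_0+(x_0.y_0)\,v$ does not depend on $t$. Hence every plane $P(t)=\mathrm{span}(\mathrm{Re}\,Z(t),\mathrm{Im}\,Z(t))$ contains the fixed vector $Y_1$, while its second generator moves along the affine line $X_0+\IR v$. Computing the Gram matrix of $\{X_0,v,Y_1\}$ one finds $X_0^2=v^2=0$, $X_0.v=-1$, $Y_1^2=y_0^2>0$ and $Y_1.X_0=Y_1.v=0$; thus $V:=\mathrm{span}(X_0,v,Y_1)$ is non-degenerate of signature $(2,1)$, a hyperbolic plane $\langle X_0,v\rangle$ orthogonal to the positive line $\IR Y_1$, and $P(t)\subset V$ for all $t$.

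Next I pass to the sub-symmetric-space cut out by $V$. The reflection of $N(\kt)_\IR$ that is $+1$ on $V$ and $-1$ on $V^\perp$ is an isometry, and since $V$ contains the whole positive part, its fixed locus in $\pd(\kt)$ is exactly the set of positive $2$-planes inside $V$, a totally geodesic $\IH^2=O(2,1)/(O(2)\times O(1))$. Sending a positive plane $P\subset V$ to its negative line $P^{\perp_V}$ identifies our curve with $t\mapsto P(t)^{\perp_V}$, which lies in $Y_1^{\perp_V}=\langle X_0,v\rangle$ because $Y_1\in P(t)$; in the hyperboloid model this is precisely the intersection of $\IH^2$ with the linear $2$-plane $Y_1^{\perp_V}$, i.e.\ a complete geodesic joining the boundary points $[X_0]$ and $[v]$ (the two isotropic lines of $\langle X_0,v\rangle$). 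As a geodesic of a totally geodesic submanifold it is a geodesic of $\pd(\kt)$, and because $\pd(\kt)\to KM(\kt)$ is a local isometry its image $\gamma(t)$ is a geodesic of $KM(\kt)$. Finally $Z(t)/t^2\to-\tfrac12 y_0^2\,v$, so $\limt[Z(t)]=[v]$ and $\gamma(t)$ converges to the cusp.

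I expect the main difficulty to be the bookkeeping around the quotient rather than the $\IH^2$ computation, which is the clean core. One must extract from the definition of a linear degeneration that its lift really has the vertical form $w(t)=x_0+it\,y_0$, so that $\mathrm{Im}\,Z(t)$ is genuinely $t$-linear and produces the common fixed vector $Y_1$; and one must justify that projection along the arithmetic quotient carries geodesics to geodesics and is compatible with Baily--Borel convergence, notwithstanding the orbifold points of $KM(\kt)$. A secondary subtlety is parametrization: the parameter $t$ is not arc length---already for $\rho=1$, where $\pd(\kt)$ is the upper half-plane and the ray is the vertical line $\beta+it\omega$, the arc-length parameter is logarithmic in $t$---so ``geodesic'' is to be read as a statement about the image curve.
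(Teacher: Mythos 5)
Your argument splits into two halves, and they fare differently.

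The geodesic half is correct, and it takes a genuinely different route from the paper. The paper works with the symmetric-space structure directly: at the point $[z]=exp_{v_0}(x+iy)$ it forms the Cartan decomposition $\fg=\fk_P\vsum\fm_P$, exhibits the one-dimensional subalgebra $\fa(v_0,x)=\mathfrak{so}(U)\subset\fm_P$ (where $U$ is the hyperbolic plane spanned by $v_0$ and the isotropic lift $x_1=-x_0$ of $-x$), and verifies by an explicit matrix computation that $\exp(A_\lambda)\cdot[z]=exp_{v_0}(x+ie^{\lambda}y)$; Helgason's theorem that geodesics are orbits of one-parameter subgroups generated by $\fm_P$ then finishes the argument and even yields a constant-speed parametrization. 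Your route---confining the planes $P(t)$ to the signature-$(2,1)$ subspace $V=\mathrm{span}(X_0,v,Y_1)$, identifying the fixed locus of the reflection along $V^\perp$ as a totally geodesic $\IH^2$, and recognizing the curve as the hyperboloid-model geodesic $\{$negative lines in $\langle X_0,v\rangle\}$ with endpoints $[X_0]$ and $[v]$---is sound (your $\langle X_0,v\rangle$ is essentially the paper's plane $U$), and it trades Helgason's classification for the elementary facts that fixed loci of isometries are totally geodesic and that geodesics of $\IH^2$ are plane sections. Your worry about orbifold points in the quotient is dissolved by the paper's convention: geodesics of $\Gamma\setminus\pd(N)$ are \emph{defined} to be images of geodesics of $\pd(N)$.

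The convergence half has a genuine gap: from $Z(t)/t^2\to-\tfrac12 y_0^2\,v$ you conclude $\limt\gamma(t)=[v]$, but convergence in $\IP(N_\IC)$ (the compact dual) does not imply convergence in the Baily--Borel topology on $\KM(\kt)$, which is what the proposition asserts. A clean counterexample within this very setup: take $\rho=1$, so $T(N,v)\isom\IH$ and $KM(X)$ is a modular curve; the \emph{horizontal} path $w(t)=t\,x_1+iy_0$ also satisfies $[Z(t)]=[\,f+tx_1+iy_0+\tfrac12(t^2x_1^2-y_0^2+2itx_1.y_0)v\,]\to[v]$ in $\IP(N_\IC)$ (divide by $t^2$, using $x_1^2>0$), yet its image stays at bounded height and does not converge to the cusp. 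So projective convergence cannot distinguish paths that do converge to the cusp from paths that do not, and your last sentence is a non sequitur. The paper closes exactly this hole with Looijenga's description of the cusp topology: the images of the $\Gamma_v$-invariant sets $U(K,v)=\Gamma_v\cdot\bigl(L(v)_\IR\times C^+(L(v))\bigr)\cdot K$ form a neighborhood basis of $[v]$ in $\overline{KM}(X)$, and one checks that a \emph{vertical} ray $exp_v(x_0+ity_0)$ eventually enters every such set (this uses the translation action of the semigroup $L(v)_\IR\times C^+(L(v))$, under which verticality is preserved). Your proof needs this step, or some equivalent handle on the Satake/Baily--Borel topology at the cusp, to be complete.
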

The (orbifold-)Riemannian metric we use is induced via the isomorphism 
$\pd(\kt) \isom O(2,\rho)/SO(2) \times O(\rho)$.

\begin{conj*}
  Every geodesic converging to $[v]$ is a linear degeneration.
\end{conj*}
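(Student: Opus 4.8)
The plan is to work entirely in a horoball neighbourhood of the cusp, where by \eqref{KM0_formula} and Corollary \ref{double_quot} the space $\KM(\kt)$ has an explicit local model as an arithmetic quotient of a tube domain, and then to classify the geodesic rays reaching the cusp point directly. Fix a primitive isotropic vector representing the standard cusp $[v]$ together with a hyperbolic partner, so that $N(\kt) = \IZ u \oplus \IZ v \oplus L$ with $L = \langle u,v\rangle^{\perp}$ of signature $(1,\rho-1)$. In the associated affine chart of $\pd(\kt)$ the period point is $w = \beta + i\omega \in L_{\IC}$ with $\omega$ in a component $C$ of the positive cone, which identifies $\pd(\kt)$ near $[v]$ with the tube domain $\kh = L_{\IR} + iC$; the cusp $[v]$ is the point reached by letting $\omega$ tend to infinity inside $C$, and a horoball neighbourhood of $[v]$ in $\KM(\kt)$ is the quotient of $\{\,w : Q(\mathrm{Im}\,w) > M\,\}$ (with $Q$ the form on $L$) by the arithmetic cusp stabiliser, acting through the lattice translations $w \mapsto w + \ell$ ($\ell \in L$), the linear action of a finite-index subgroup of $O^{+}(L)$ on $C$, and integral rescalings. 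In these coordinates the prototype $\pi(\sigma_X(\beta,t\omega))$ becomes the vertical ray $w(t) = \beta + i\,t\,\omega$, and a linear degeneration is a path of this shape up to the actions of $Aut^\dagger(\kt)$, $\GLT$ and reparametrisation.

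First I would lift a given geodesic ray $\gamma(t)$ converging to $[v]$ to a geodesic $\tilde\gamma(t) = x(t) + i\,y(t)$ in $\kh$ for $t \gg 0$; convergence to $[v]$ then says precisely that $\tilde\gamma$ eventually enters every horoball, i.e.\ $Q(y(t)) \to \infty$. The decisive structural input is that the metric induced by $\pd(\kt) \isom O(2,\rho)/SO(2)\times O(\rho)$ is translation invariant in the real directions and, up to scale, is the Hessian metric $h(y)$ of $-\log Q$ applied simultaneously to $dx$ and $dy$. Translation invariance in $x$ yields conserved momenta $h(y(t))\,\dot x(t) = c$ for a constant covector $c$. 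Since $h(y) \to 0$ as $y$ leaves every compactum of $C$ towards the vertex at infinity, a nonzero $c$ would force $|\dot x(t)| \to \infty$, incompatible with $\tilde\gamma$ remaining in a horoball; hence $c = 0$, so $\dot x \equiv 0$ and $\mathrm{Re}\,\tilde\gamma \equiv \beta$. Equivalently, $\tilde\gamma$ lies in the real slice $\{\mathrm{Re}\,w = \beta\} \isom C$, which is totally geodesic as the fixed locus of the isometric involution $w \mapsto 2\beta - \bar w$.

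It then remains to analyse geodesic rays of the Lorentzian cone $(C,h)$ that run out to the vertex at infinity. Here I would use that $C$, a symmetric cone of rank two, is a Riemannian product $\IR_{>0} \times \IH^{\rho-1}$, the first factor being the radial coordinate $\log Q(y)$ and the second the hyperboloid $\{Q = 1\}$ with its hyperbolic metric. A geodesic of $C$ is thus a pair consisting of an affine radial part and a geodesic of $\IH^{\rho-1}$, and escaping to the vertex forces the radial part to tend to $+\infty$. Projecting back to $\KM(\kt)$ and comparing with Proposition \ref{lin_deg_prop}, the aim is to conclude that every geodesic ray to $[v]$ is, up to the cusp stabiliser and reparametrisation, of the form $\beta + i\,y(t)$ with $y(t)$ a cone geodesic escaping to the vertex, which is exactly the class of linear degenerations.

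The hard part will be the final identification, and it is genuinely delicate once $\rho \geq 2$. The cone geodesics running to the vertex are straight only in logarithmic coordinates: in the light-cone splitting $C \isom \IR_{>0} \times \IH^{\rho-1}$ they are affine, so in the linear coordinate $y$ they are radial rays precisely when the hyperbolic component is constant, and are otherwise exponentially reparametrised with a drifting hyperbolic direction. These drifting geodesics still converge to $[v]$ in the Baily--Borel topology --- the $\rho = 2$ case $\kh \isom \IH \times \IH$ already exhibits them, both coordinates tending to $i\infty$ at unequal rates --- so they cannot simply be discarded, and a discrete unit translation on $\IH^{\rho-1}$ does not straighten a continuously winding direction in the quotient. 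One must therefore show that the defined class of linear degenerations already contains all such log-linear rays, which I expect to require exploiting the linear $O^{+}(L)$-action on $\IH^{\rho-1}$ inside the cusp stabiliser and, very likely, passing between the Fourier--Mukai partners attached to $[v]$ via (*). By comparison, making the momentum-decay argument of the second paragraph fully rigorous --- ruling out geodesics whose real part drifts within the horoball by invoking the precise boundary asymptotics of $h(y)$ rather than only its leading decay --- I expect to be technical rather than conceptual.
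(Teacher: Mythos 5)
You should first be aware that this statement is Conjecture \ref{geodesic_conjecture} of the paper: the author does not prove it, and offers only evidence, namely the Picard rank one case (where $\pd^+(X)\isom\IH$ and the geodesics entering the cusp are exactly the vertical lines) together with a heuristic reduction, for the reductive Borel--Serre compactification, to Ji--MacPherson's classification of EDM geodesics \cite{JiMacPherson}. So there is no proof in the paper to compare yours with; you are attacking an open problem, and your text is, by your own admission in the final paragraph, not a complete argument.

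More importantly, the step you defer is not a removable technicality: it is precisely where the strategy, and indeed the conjecture as literally stated, breaks down once $\rho(X)\geq 2$. Take $\rho(X)=2$ with $NS(X)\isom U$, so that $N(X)\isom U\oplus U$, $\pd^+(X)\isom\IH\times\IH$ in tube coordinates, and $[v_0]$ is the cusp at which both imaginary parts go to infinity. Your ``drifting'' rays, say $\tilde\gamma(t)=(x_1+ie^{t},\,x_2+ie^{2t})$, are genuine geodesics of the product metric, and the image of $\tilde\gamma$ in $KM(X)$ does converge to $[v_0]$ in $\KM(X)$: both coordinates of the imaginary part tend to $+\infty$, so $\tilde\gamma$ eventually enters every Looijenga neighbourhood $U(K,v_0)$ used in the proof of Proposition \ref{lin_deg_prop}. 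But such a ray is not a linear degeneration, and no group action can make it one. If it were, then by the uniqueness-of-lifts argument of Proposition \ref{lin_deg_prop}(2) the lift $\tilde\gamma$ would, for $t\gg0$ and up to reparametrisation, coincide with $g\cdot exp_w(x_0+isy_0)$ for a single $g\in\Gamma_X^+$; comparing limit points in $\IP(N(X)_\IC)$ forces $g\cdot w=\pm v_0$, and since the stabiliser of $v_0$ acts linearly on the tube model (as used in that same proof), the imaginary part of any such lift traces a straight ray $\IR_{>0}\cdot y_0'$, whereas the imaginary part of $\tilde\gamma$ traces the curve $\Set{(u,cu^{2})}{u\gg0}$, which no linear map straightens. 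Passing to another Fourier--Mukai partner via Ma's bijection only conjugates by a further lattice isometry and changes nothing. So the class of linear degenerations cannot be enlarged to ``contain all log-linear rays'' as you hope; these rays are counterexamples, and any correct statement must exclude them --- for instance by requiring the geodesic to be eventually distance minimising in the quotient (the drifting rays are not EDM when the cusp stabiliser is infinite), or by replacing Baily--Borel convergence with convergence to the boundary component $e([v])$ in the Borel--Serre compactification, which is what the paper's discussion of \cite{JiMacPherson} implicitly does. Note also that your momentum argument is unsound in the same generality: when the image of the cusp stabiliser in $O(L(v))$ is infinite, the Baily--Borel neighbourhoods are norm-horoballs $Q(\mathrm{Im}\,w)>M$, on which $h(y)$ does not decay, and geodesics with non-constant real part (a semicircle in one factor against a steeper vertical ray in another) also converge to the cusp, so even the reduction to the real slice $\{\mathrm{Re}=\beta\}$ fails there.
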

This conjecture is true in the case that $X$ has Picard rank one.
Moreover, if one uses the Borel--Serre compactification to compactify
$KM(\kt)$ the conjecture seems to follow from \cite{JiMacPherson}.

The next theorem classifies paths of stability conditions mapping to 
linear degenerations in the K\"ahler moduli space.
\begin{thm*}[\ref{degeneration_thm}]
  Let $[v]$ be a standard cusp of $\KM(\kt)$. Let $ \sigma(t) \in
  Stab^\dagger(X)$ be a path in the stability manifold such 
  that $\pi(\sigma(t)) \in \KM(X)$ is a linear
  degeneration to $[v]$.  Let $Y$ be the K3 surface
  associated to $[v]$ by (*).  Then there exist
  \begin{enumerate}
  \item a derived equivalence $\Phi: \cd^b(Y) \xlra{\sim} \kt$
  \item classes $\beta \in NS(Y)_\IR$, $\omega \in \overline{Amp}(Y)$ and
  \item a path $g(t) \in \GLT$
  \end{enumerate}
  such that 
  \[ \sigma(t)  =  \Phi_* (\sigma_Y^*(\beta,t\, \omega) \cdot g(t))  \]
  for all $t \gg 0$.

  Moreover, the hearts of $\sigma(t) \cdot g(t)^{-1}$ are independent
  of $t$ for $t \gg 0$.
  If $\omega \in Amp(X)$, then the heart can be explicitly described
  as a tilt  of $Coh(Y)$.
\end{thm*}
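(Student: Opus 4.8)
The plan is to transport the whole situation to the large--volume limit of $Y$, to read off $\beta$ and $\omega$ from the defining shape of a linear degeneration, and then to show that once the picture sits over $Y$ the only freedom left in lifting a linear degeneration from $\KM(\kt)$ back to $Stab^\dagger(\kt)$ is the $\GLT$--action. First I would use Ma's bijection (*), together with the construction underlying Theorem \ref{limiting_hearts}, to fix a derived equivalence $\Phi_0 \colon \cd^b(Y) \sra \kt$ carrying the large--volume cusp of $Y$ to $[v]$. Replacing $\sigma(t)$ by $\tilde\sigma(t) := (\Phi_0)_*^{-1}\,\sigma(t)$ reduces us to the case in which $[v]$ is the standard cusp of $Y$ itself. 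In the tube--domain coordinates on $\pd(\kt)$ adapted to the isotropic vector representing $[v]$, the definition of a linear degeneration says precisely that a lift of $\gamma(t)=\pi(\sigma(t))$ to $\pd(\kt)$ has the form $z(t)=\beta+i\,t\,\omega$; this is exactly what produces the data $\beta\in NS(Y)_\IR$ and $\omega\in\overline{Amp}(Y)$, and the reference path $\sigma_Y^*(\beta,t\,\omega)$ is sent by $\pi$ onto $\gamma(t)$.

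Next I would compare the two lifts. Both $\tilde\sigma(t)$ and $\sigma_Y^*(\beta,t\,\omega)$ lie in the same fibre $\pi^{-1}(\gamma(t))$, and by the double--quotient description \eqref{KM0_formula} (Corollary \ref{double_quot}) this fibre is a single $Aut^\dagger(\kt)\times\GLT$--orbit. Hence for each $t$ there are $\phi_t\in Aut^\dagger(\cd^b(Y))$ and $g_t\in\GLT$ with $\tilde\sigma(t)=(\phi_t)_*\bigl(\sigma_Y^*(\beta,t\,\omega)\cdot g_t\bigr)$. Since the $\GLT$--action is free and proper, $g_t$ is uniquely determined and depends continuously on $t$; since $Aut^\dagger$ acts properly discontinuously (this is where the appendix is needed), $\phi_t$ is locally constant in $t$ and therefore equal to a fixed $\phi_\infty$ for $t\gg0$. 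Setting $\Phi:=\Phi_0\circ\phi_\infty$ and $g(t):=g_t$ then gives $\sigma(t)=\Phi_*\bigl(\sigma_Y^*(\beta,t\,\omega)\cdot g(t)\bigr)$ for all $t\gg0$, as required.

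For the final assertion, observe that $\sigma(t)\cdot g(t)^{-1}=\Phi_*\,\sigma_Y^*(\beta,t\,\omega)$, so its heart is $\Phi\bigl(\ka(\sigma_Y^*(\beta,t\,\omega))\bigr)$. It therefore suffices to recall Bridgeland's explicit analysis of these stability conditions: as $t\to\infty$ no new (semi)stable objects appear, so the heart $\ka(\sigma_Y^*(\beta,t\,\omega))$ stabilises and is independent of $t$ for $t\gg0$. When $\omega\in Amp(Y)$ this heart is the explicit tilt of $Coh(Y)$ at the torsion pair determined by $(\beta,\omega)$, which transports under $\Phi$ to the asserted tilt.

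The hard part will be the control of the auto--equivalence $\phi_t$ in the second step: one must know that an arbitrary continuous lift of a linear degeneration differs from the reference path by a gauge transformation that can be chosen continuously in $t$ and is eventually constant. This rests on the properties of $Aut^\dagger(\kt)$ proved in the appendix (proper discontinuity of its action) and on the freeness and properness of the $\GLT$--action. A secondary difficulty is to check that $\sigma_Y^*(\beta,t\,\omega)$ really realises every linear degeneration to $[v]$, including those approaching along boundary directions $\omega\in\overline{Amp}(Y)\setminus Amp(Y)$, where Bridgeland's construction has to be extended and the limiting heart need no longer be the standard tilt.
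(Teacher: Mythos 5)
Your overall strategy --- reduce to the large-volume limit of $Y$ by an equivalence supplied by the appendix, compare $\sigma(t)$ with the reference path $\sigma_Y^*(\beta,t\omega)$, and show the discrepancy is a $\GLT$-gauge --- matches the paper's, but the mechanism you use to identify the two paths has a genuine gap. You decompose fibrewise over the K\"ahler moduli space: for each $t$ you write $\tilde\sigma(t)=(\phi_t)_*\bigl(\sigma_Y^*(\beta,t\omega)\cdot g_t\bigr)$ and claim that $g_t$ is uniquely determined (hence continuous) and that $\phi_t$ is locally constant by proper discontinuity. Neither claim is justified. The pair $(\phi_t,g_t)$ is not unique: the even shifts $[2k]$ lie in $Aut^\dagger(\kt)$ and act on $Stab^\dagger$ exactly as $\Sigma_{2k}\in\GLT$ (Example \ref{shift_example}), so $(\phi_t,g_t)$ can be traded for $(\phi_t\circ[-2k],\,\Sigma_{2k}\,g_t)$; consequently neither $g_t$ nor $\phi_t$ is a well-defined function of $t$, and the local-constancy argument has nothing to apply to. Moreover, proper discontinuity of the full group $Aut^\dagger(\kt)$ on $Stab^\dagger$ is neither stated nor proved in the appendix (what the appendix provides is Proposition \ref{surj_of_aut}, Theorem \ref{reduction_to_lvl} and Corollary \ref{double_quot}); and even granting it, promoting ``only finitely many $\phi$ occur locally'' to ``one $\phi$ works for all $t\gg0$'' requires the disjoint-or-equal analysis of fixed loci that the paper performs in Proposition \ref{lin_deg_prop}(2), exploiting that $\Gamma_v$ acts \emph{linearly} in tube coordinates --- an argument available on the period domain, not on $Stab^\dagger$. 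The paper sidesteps your fibrewise gauge problem entirely: Proposition \ref{lin_deg_prop}(2) shows that the specific lift $\bar\pi(\sigma(t))\in\pd_0^+(X)$ is itself of the form $exp_w(x_0+i\,t\,y_0)$ for a single $w\in\Gamma_X\cdot v$; Theorem \ref{reduction_to_lvl} replaces $w$ by $v_0$; then a single application of Lemma \ref{cover_stab} and one even shift put $\sigma(t_0)$ into $\overline{V}_{>2}(X)$ for one value $t_0$, Lemma \ref{sigma_extension_lemma} gives $\sigma(t_0)=\sigma_X^*(x_0,t_0y_0)$, and uniqueness of path lifting for the covering $\pi\colon Stab^\dagger(X)\ra\kp_0^+(X)$ forces $\sigma(t)=\sigma_X^*(x_0,ty_0)$ for all $t\ge t_0$. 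The auto-equivalence ambiguity is resolved once, at $t_0$, never ``for each $t$''.

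The second gap is the final assertion about hearts. For $\omega\in Amp(Y)$ the heart of $\sigma_Y(\beta,t\omega)$ is constant by construction (Remark \ref{hearts}), as you say. But when $\omega$ lies on the boundary of the ample cone --- exactly the case the theorem must also cover, and which you set aside as a ``secondary difficulty'' --- the stability condition $\sigma_Y^*(\beta,t\omega)$ is defined only abstractly, by extending a section of the covering $\pi$ over a contractible set (Lemma \ref{sigma_extension_lemma}); its heart is not given by any explicit torsion pair, and there is no result of Bridgeland asserting that ``no new semistable objects appear as $t\to\infty$''. That $t$-independence is the hardest new content of the paper's proof: it is established by grouping Harder--Narasimhan factors, proving the three claims about $\kp_t((0,1))$, $\kp_t(0)$ and $\kp_t(1)$, perturbing $ty$ to $t(y+s\omega)$ with $\omega$ ample so as to import Remark \ref{hearts}, and invoking bounded mass and finiteness of Mukai vectors \cite[Prop.\ 9.3, Lem.\ 9.3]{BridgelandK3} to split the relevant set of Mukai vectors as $S=S^0\dunion S'$. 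Treating this as a recollection of known results leaves the main point of the ``moreover'' clause unproven.
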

Here, $\sigma^*_X(\beta,\omega)$ is an extension of Bridgeland's
construction of $\sigma_X(\beta,\omega)$ to the case that $\omega
\in \overline{Amp}(X)$ and $\omega^2>2$ (cf.\ Lemma
\ref{sigma_extension_lemma}).

\begin{figure}
  \includegraphics[width=7cm]{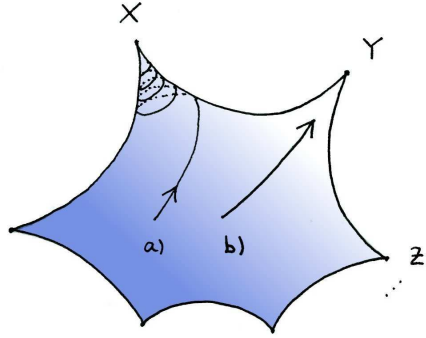}
  \caption{K\"ahler moduli space with cusps, associated K3 surfaces
    and two different degenerating paths.
    Paths of type a) are considered in Theorem \ref{limiting_hearts}, 
    whereas b) pictures a linear degeneration.}
\end{figure}

\subsection{Moduli spaces}
Another question we posed ourselves is: 
Can we construct $Y$ as a moduli space of stable objects in stability 
conditions near the associated cusp $[v] \in \KM(\kt)$?

For $v \in N(X)$ and $\sigma\in Stab(X)$ we consider the following
moduli space of semi-stable objects
\[ \km^\sigma(v) = \Set{ E \in \cd^b(X) }{ E \,
  \sigma\text{-semi-stable}, \, v(E)=v } / \sim \] where $E \sim F$ if
there is an even number $k \in 2\IZ$ and a quasi-isomorphism $E \isom
F[k]$. This is a version of the moduli stack constructed by Lieblich
\cite{Lieblich} and Toda \cite{Toda}. We prove the following result.

\begin{thm*}[\ref{reconstruction_thm}]
  If $v \in N(X)$ is an isotropic vector with $v.N(X)=\IZ$ 
  and $\sigma \in Stab^\dagger(X)$ a $v$-general stability condition, then:
  \begin{enumerate}
  \item The moduli space $\km^\sigma(v)$ is represented by a K3 surface $Y$.
  \item The Hodge structure $H^2(Y,\IZ)$ is the isomorphic to the subquotient 
    of $\widetilde{H}(X,\IZ)$ given by $v^\perp/ \IZ v$.
  \item The universal family $E \in \km^{\sigma}_X(v)(Y) \subset \cd^b(X \times Y)$ induces a derived 
    equivalence $\cd^b(X) \sra \cd^b(Y)$.
  \end{enumerate}
\end{thm*}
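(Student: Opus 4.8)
The plan is to construct $Y$ directly as the moduli space and to exhibit the universal object as a Fourier--Mukai kernel, verifying the equivalence by a stability-based orthogonality criterion rather than by first reducing $\sigma$ to a distinguished chamber. First I would analyze the local structure. Since $\sigma$ is $v$-general, every $\sigma$-semistable $E$ with $v(E)=v$ is in fact $\sigma$-stable, hence simple, so $\Hom(E,E)=\IC$ and, by Serre duality on the K3 surface $X$, also $\Ext^2(E,E)=\IC$. With the pairing on $N(X)$ taken to be the negative Euler form, $v.v=0$ gives $\dim\Ext^1(E,E)=v.v+2=2$. The Yoneda pairing $\Ext^1(E,E)\times\Ext^1(E,E)\ra\Ext^2(E,E)\isom\IC$ is alternating and non-degenerate, so standard deformation theory shows that $\km^\sigma(v)$, where it exists, is smooth of dimension $2$ and carries a nowhere-degenerate holomorphic $2$-form; in particular its canonical bundle is trivial. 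The hypothesis $v.N(X)=\IZ$ is exactly the numerical criterion guaranteeing a universal family: choosing $w\in N(X)$ with $v.w=1$ lets one twist a quasi-universal object into a genuine universal family $E\in\cd^b(X\times Y)$, so the moduli space is \emph{fine}.

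Granting for the moment that $Y:=\km^\sigma(v)$ is a non-empty proper surface, I would next prove (3), from which (1) and (2) follow. The universal family defines a Fourier--Mukai functor $\Phi_E:\cd^b(Y)\ra\cd^b(X)$ with $\Phi_E(\ko_y)=E_y$, the stable object represented by $y\in Y$. To apply the Bondal--Orlov/Bridgeland--Maciocia criterion it suffices to compute graded $\Hom$'s between these objects. For $y\neq y'$ the $E_y,E_{y'}$ are non-isomorphic $\sigma$-stable objects of the same phase, so $\Hom(E_y,E_{y'})=0$, and Serre duality gives $\Ext^2(E_y,E_{y'})=0$; since $\chi(E_y,E_{y'})=-v.v=0$ the middle term $\Ext^1(E_y,E_{y'})$ vanishes as well. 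For $y=y'$ the computation above gives $(\IC,\IC^2,\IC)$, matching $\Ext^\bullet(\ko_y,\ko_y)$ on a surface exactly. Hence $\Phi_E$ is fully faithful, and as $Y$ is connected with trivial canonical bundle it is an equivalence, so its inverse realizes $\cd^b(X)\sra\cd^b(Y)$. Since $Y$ has trivial canonical bundle and is derived equivalent to the K3 surface $X$, comparison of Hochschild homology forces $h^{1,0}(Y)=0$, so $Y$ is a K3 surface, completing (1). For (2), the equivalence induces a Hodge isometry $\widetilde{H}(Y,\IZ)\isom\widetilde{H}(X,\IZ)$ carrying the point class $v(\ko_y)=(0,0,1)$ to $v$; since $H^2(Y,\IZ)$ is recovered inside $\widetilde{H}(Y,\IZ)$ as $(0,0,1)^\perp/\IZ(0,0,1)$, transport of structure yields the Hodge isometry $H^2(Y,\IZ)\isom v^\perp/\IZ v$.

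The step I expect to be the main obstacle is the one I have so far taken for granted: showing that $\km^\sigma(v)$ is non-empty, separated and proper, so that it is genuinely a smooth projective surface rather than merely smooth wherever points happen to exist. Separatedness and the absence of strictly semistable objects come from $v$-generality, but non-emptiness and properness rest on the representability and boundedness results of Lieblich \cite{Lieblich} and Toda \cite{Toda} for moduli of semistable objects on K3 surfaces. To seed non-emptiness I would use the $\GLT$- and $Aut^\dagger(X)$-actions, which leave the set of $\sigma$-stable objects of class $v$ unchanged up to equivalence, to connect $\sigma$ to a geometric stability condition $\sigma_X(\beta,\omega)$ in a large-volume chamber, where Bridgeland's large-volume description identifies the stable objects of class $v$ with Gieseker-stable sheaves whose moduli space is the non-empty K3 surface furnished by Mukai's theorem. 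Controlling the wall-crossing between that chamber and $\sigma$ for the isotropic class $v$ — or, alternatively, invoking Toda's chamber-independent non-emptiness for primitive classes — is the technical heart of the argument.
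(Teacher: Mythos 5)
Your second paragraph is essentially correct and is a genuinely different route from the paper's: the orthogonality computation (for $y\neq y'$, stability in the same phase gives $\Hom(E_y,E_{y'})=0$, Serre duality gives $\Ext^2=0$, and $\chi=-v.v=0$ kills $\Ext^1$) combined with the Bondal--Orlov/Bridgeland--Maciocia criterion is a valid way to get part (3), and the paper never argues this way --- there, (3) falls out for free because the universal family is by construction the kernel of a composition of known Fourier--Mukai equivalences. The genuine gap is the step you flag and then defer: your whole argument presupposes that $Y=\km^\sigma_X(v)$ exists as a non-empty, smooth, \emph{proper} surface representing the paper's set-valued functor. This is not a technicality to be outsourced. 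Lieblich and Toda give Artin stacks (of all complexes, resp.\ of semistable objects), not representability of this functor by a scheme; bridging that requires rigidifying the stack, trivializing the resulting gerbe (where $v.N(X)=\IZ$ enters), algebraization, and properness of the coarse space, none of which the cited references supply in the form needed. Worse, your concrete plan for non-emptiness --- deform stable objects from a large-volume chamber to $\sigma$ and ``control the wall-crossing'' --- fails as stated: for the isotropic class there are totally destabilizing walls. The paper's own closing remark of Section \ref{sec:Moduli} records that when $\sigma$ crosses an $(A^{\pm})$-type wall \emph{every} skyscraper sheaf $\ko_x$ is destabilized and replaced by $T_A^{\pm 2}\ko_x$; so the set of stable objects of class $v$ is not locally constant along your path, and only the isomorphism class of the moduli problem survives.

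What makes the paper's proof work is precisely the mechanism that repairs this: Theorem \ref{moduli_under_fm} shows that a Fourier--Mukai equivalence transports the moduli \emph{functor} isomorphically, so one never needs objects to stay stable along a path. Concretely, Theorem \ref{reduction_to_lvl} produces an equivalence respecting $Stab^\dagger$ with $\Phi^H(v)=v_0$, reducing to $v=v_0$; Lemma \ref{cover_stab} produces $\Psi\in\WT(X)$ (fixing $v_0$, hence preserving $v_0$-generality) with $\Psi_*\sigma\in\overline{U}(X)$, and Lemma \ref{v_0_general} excludes $\del U(X)$, reducing to $\sigma\in U(X)$; there Proposition \ref{v_0_semi_stable} classifies all semistable objects of class $v_0$ as the $\ko_x[2k]$, and the functor is identified with $\underline{X}$ by hand via the graph construction and semicontinuity. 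In other words, your idea of ``connecting $\sigma$ to the geometric chamber by group actions'' is the right one, but once carried out it transports the entire moduli problem and leaves nothing to prove --- no abstract representability, no properness, no deformation theory, and no Bondal--Orlov. As written, your paragraphs 1--2 reduce the theorem to the existence statement, and the existence statement \emph{is} the theorem.
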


This is in some sense a negative answer to our question:
The isomorphism type of $\km^\sigma(v)$ does not depend on
whether the stability condition $\sigma$ is close to a cusp or not.  On
the other hand, the isotropic vector $v$ determines a standard cusp
$[v] \in \KM(\kt)$ and $Y$ is indeed the K3 surface associated to the
cups.

\subsection{Auto-equivalences}

On the way of proving the above result we need to construct enough
equivalences that respect the distinguished component $Stab^\dagger(\kt)$.
We collected our results in an appendix which is essentially independent
of the rest of the paper.

\begin{thm*}[\ref{sheaf_respect}, \ref{spherical_respect}, \ref{curve_respect}]
  The following equivalences respect the distinguished component.
  \begin{itemize}
    \item For a fine, compact, two-dimensional 
      moduli space of Gieseker-stable sheaves $M^h(v)$, the  
      Fourier--Mukai equivalence induced by the universal family.
    \item The spherical twists along Gieseker-stable spherical
      vector bundles.
    \item The spherical twists along $\ko_C(k)$ for a $(-2)$-curve
      $C \subset X$ and $k \in \IZ$.
  \end{itemize}
\end{thm*}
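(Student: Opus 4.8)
The plan is to reduce all three statements to a single verification, using the general principle that an equivalence $\Phi\colon \cd^b(X)\sra \cd^b(Y)$ (with $Y=X$ in the twist cases) acts as a homeomorphism $\Phi_*$ on the stability manifold and therefore permutes its connected components. Consequently $\Phi$ respects the distinguished component, i.e.\ $\Phi_*\big(Stab^\dagger(X)\big)=Stab^\dagger(Y)$, as soon as $\Phi_*$ maps a \emph{single} stability condition of $Stab^\dagger(X)$ into $Stab^\dagger(Y)$. To certify that a stability condition $\tau$ on a K3 surface $Z$ lies in the distinguished component I would use Bridgeland's criterion: the geometric stability conditions form an open subset of $Stab^\dagger(Z)$, so if every skyscraper $\ko_z$ is $\tau$-stable (necessarily all of one phase, since they share the Mukai vector $(0,0,1)$), then $\tau\in Stab^\dagger(Z)$. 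In each case the task thus becomes: exhibit a geometric $\sigma\in Stab^\dagger(X)$ together with a transported partner whose stable skyscrapers are the images of the $\ko_z$, i.e.\ check that a whole family of distinguished objects is simultaneously stable in one geometric $\sigma$.

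For the Fourier--Mukai equivalence $\Phi$ induced by a fine, compact, two-dimensional moduli space $Y=M^h(v)$, the objects $\Phi(\ko_y)$ are, up to shift, exactly the Gieseker-stable sheaves $\ke_y$ parametrised by $Y$, all of Mukai vector $v$. I would take $\sigma=\sigma_X(\beta,\omega)$ in the large-volume limit ($\omega=t\,\omega_0$, $t\gg 0$) and invoke the comparison between Bridgeland stability and $\beta$-twisted Gieseker stability in that limit: for $t$ large every $h$-Gieseker-stable sheaf of class $v$ is $\sigma$-stable. Since the phase of a $\sigma$-stable object depends only on its class through $Z_\sigma(v)$, the entire family $\{\ke_y\}$ is stable of one phase, so the transported condition $\Phi^{-1}_*\sigma$ on $Y$ is geometric and lies in $Stab^\dagger(Y)$, which by the reduction step finishes this case.

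The same large-volume input drives the twist $T_S$ along a Gieseker-stable spherical vector bundle $S$: here one computes $T_S^{\pm1}(\ko_x)$ explicitly from the twist triangle (the evaluation $S^{\oplus r}\to\ko_x$ and its dual) and places the resulting complexes in the tilted heart $\ka(\sigma_X(\beta,\omega))$, using stability of $S$ in the large-volume limit together with the long exact sequences to conclude they are $\sigma$-stable of a common phase. For the $(-2)$-curve case I would first reduce in $k$ by conjugation: since $T_{\Psi(S)}=\Psi\,T_S\,\Psi^{-1}$, tensoring by $L=\ko_X(mC)$ (so $\deg_C L=-2m$) gives $T_{\ko_C(k)}=(-\otimes L)\circ T_{\ko_C(k+2m)}\circ(-\otimes L^{-1})$, and tensoring by a line bundle respects the distinguished component because it carries geometric stability conditions to geometric ones; it therefore suffices to treat the two residues, say $T_{\ko_C}$ and $T_{\ko_C(1)}$. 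For these, $T_{\ko_C(k)}(\ko_x)=\ko_x$ when $x\notin C$, while for $x\in C$ the image is an object supported on $C$ built from line bundles on $C$, so one is reduced to a local analysis near the $(-2)$-curve and to checking $\sigma$-stability of these local objects in a geometric $\sigma$ adapted to $C$.

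The main obstacle in all three cases is \emph{uniform} stability: one must produce one geometric stability condition in which an entire family of objects is \emph{simultaneously} stable. For the moduli case this is essentially automatic, as the images are honest stable sheaves of fixed class; the real work is in the two twist cases, where the images are complexes. There one must control $\sigma$-stability of these complexes rather than mere slope or Gieseker stability of sheaves, keeping careful track of the tilted heart in which they live and of the extensions coming from the twist triangle. The curve case carries the additional difficulty that $\ko_C(k)$ is not locally free, so the twisted skyscrapers are genuine complexes concentrated near $C$ and the base-case verification must be carried out by hand in the local model.
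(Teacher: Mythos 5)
Your reduction principle (one stability condition suffices, by connectedness) and your treatment of the first case do match the paper: there the certificate is Corollary \ref{stab_eq_criterion} plus a large-volume-limit comparison. Two caveats, though. First, the uniformity you call ``essentially automatic'' is precisely the technical content of the paper's Proposition \ref{StableFamilies}: Bridgeland's comparison is proved for a \emph{single} Gieseker-stable sheaf, with a threshold depending a priori on the sheaf, and making it uniform over the family requires bounding $\nu(A)=s(A)/r(A)$ for all destabilizing subobjects of all $E_m$ simultaneously (via semicontinuity of $h^0$ and quasi-compactness of $M$); also ``same Mukai vector $\Rightarrow$ same phase'' is not automatic (phases of stable objects of equal class can differ by even integers), and is settled by noting all $E_m$ lie in one heart $\ka(0,h)$. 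Second, for the spherical bundle twist $T_A$ you propose to verify Bridgeland stability of the complexes $T_A(\ko_x)$ directly in a tilted heart via the twist triangle; this hard step is never carried out, and it is unnecessary: since $A$ is locally free, $T_A(\ko_x)$ is, up to shift, the \emph{sheaf} $\ker\bigl(A\otimes\Hom(A,\ko_x)\surj\ko_x\bigr)$, these are Gieseker-stable by Mukai \cite{Mukai1987}, and the resulting flat family over $X$ exhibits $X$ as a fine two-dimensional moduli space $M_h(v_1)$. So the twist \emph{is} an instance of the first case (Proposition \ref{sheaf_respect}), and no new stability computation is needed.

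The genuine failure is in the $(-2)$-curve case. Your certificate requires a \emph{geometric} stability condition in which all images $T_{\ko_C(k)}(\ko_x)$ are stable, but no such condition exists. Indeed, $v_0.v(\ko_C(k))=0$, so $T_{\ko_C(k)}^H(v_0)=v_0$ and the images have Mukai vector $v_0=(0,0,1)$; yet for $x\in C$ the object $T_{\ko_C(k)}(\ko_x)$ is a genuine two-term complex supported on $C$, not a shifted skyscraper, and by Proposition \ref{v_0_semi_stable} the only $\sigma$-semistable objects of class $v_0$ for $\sigma\in U(X)$ are even shifts of skyscrapers. So your ``geometric $\sigma$ adapted to $C$'' cannot exist, and the proposed local analysis cannot get off the ground. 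The certifying stability condition must be non-geometric, and its membership in $Stab^\dagger(X)$ must be established by wall-crossing rather than by geometricity: the paper shows that \emph{every} pair $(C,k)$ occurs as a nonempty boundary component of $U(X)$ of type $(C_k)$ --- choosing $\eta\in\overline{Amp}(X)$ with $C.\eta=0$, $\eta^2>2$, and $\beta$ with $\beta.C+k\in(-1,0)$ plus genericity --- and then invokes Bridgeland's Theorem \ref{U_boundary}, which says the twist maps this boundary component back into $\del U(X)$, so ${T_{\ko_C(k)}}_*\overline{U}(X)\cap\overline{U}(X)\neq\emptyset$ and connectedness finishes the argument (Remark \ref{WT_Dagger}). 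Your conjugation trick $T_{\ko_C(k)}=\Psi\, T_{\ko_C(k\pm 2m)}\,\Psi^{-1}$ with $\Psi=(-\otimes\ko_X(mC))$ is correct and would legitimately reduce to $k\in\{0,1\}$, but it does not touch this core obstruction; in the paper's argument it is also unnecessary, since the choice of $\beta$ handles all $k$ uniformly.
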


This allows us to show the following strengthening of a result
of \cite{HLOY2004}, \cite{HuybrechtsMacriStellariOrientation2009}.

\begin{prop*}[\ref{surj_of_aut}]
  Let $Aut^\dagger(\cd^b(X)) \subset Aut(\cd^b(X))$ be the subgroup of
  auto-equivalences which respect the distinguished component. 
  Then 
  \[ Aut^\dagger(\cd^b(X)) \lra O_{Hodge}^+(\HT(X,\IZ)) \]  
  is surjective.
\end{prop*}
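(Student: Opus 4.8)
The plan is to lift a generating set of $O^+_{Hodge}(\HT(X,\IZ))$ to auto-equivalences that lie in $Aut^\dagger(\cd^b(X))$, using the equivalences already shown to respect the distinguished component in the preceding theorem. By the orientation theorem of \cite{HuybrechtsMacriStellariOrientation2009} (completing \cite{HLOY2004}) the image of the representation $Aut(\cd^b(X)) \to O(\HT(X,\IZ))$ is exactly $O^+_{Hodge}(\HT(X,\IZ))$; the new content is therefore only that this image is already attained inside $Aut^\dagger$. I would first record the elementary equivalences which respect the distinguished component because they preserve the geometric locus of the $\sigma_X(\beta,\omega)$: the shift $[1]$, realizing $-\id$; tensoring $\otimes\,\ko_X(B)$ by a line bundle with $B\in NS(X)$, realizing the Eichler transvection $\exp(B)$; and push-forward $f_*$ along an automorphism $f$ of $X$. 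Together with the equivalences of the preceding theorem these should furnish all generators.

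I would organize the surjectivity around the primitive isotropic class $v_0:=v(\ko_x)=(0,0,1)$, the Mukai vector of a skyscraper sheaf, which satisfies $v_0.\HT(X,\IZ)=\IZ$. Given $g\in O^+_{Hodge}(\HT(X,\IZ))$, the image $w:=g(v_0)$ is again primitive, isotropic, algebraic and of divisibility one; after composing $g$ with a shift and a line-bundle twist I may assume $w=(r,c,s)$ has $r>0$, so that $w$ is represented by Gieseker-stable sheaves. The moduli space $Y:=M^h_X(w)$ is then a fine, compact, two-dimensional moduli space, hence a K3 surface, and its universal family induces a Fourier--Mukai equivalence $\Phi_1\colon\cd^b(Y)\sra\cd^b(X)$ with $(\Phi_1)_*(v_0^Y)=w$. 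By the first item of the preceding theorem $\Phi_1$ respects the distinguished components, so $h:=(\Phi_1)_*^{-1}\circ g\colon\HT(X,\IZ)\to\HT(Y,\IZ)$ is an orientation-preserving Hodge isometry fixing the point class, $h(v_0^X)=v_0^Y$.

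It then remains to realize $h$ by an equivalence $\Psi\colon\cd^b(X)\sra\cd^b(Y)$ respecting the distinguished components; granting this, $\Phi:=\Phi_1\circ\Psi$ is an auto-equivalence of $\cd^b(X)$ mapping $Stab^\dagger(X)$ to itself, hence lies in $Aut^\dagger$, and satisfies $\Phi_*=g$. Since $h$ fixes $v_0$ it preserves the flag $\IZ v_0\subset v_0^\perp\subset\HT$ and descends to a Hodge isometry $\bar h\colon H^2(X,\IZ)=v_0^\perp/\IZ v_0\sra H^2(Y,\IZ)$. Decomposing the stabilizer of $v_0$ as $O(H^2)\ltimes H^2$, I would factor $h$ as an Eichler transvection $\exp(B)$ with $B\in NS(Y)$ (realized by $\otimes\,\ko_Y(B)$) composed with the canonical lift of $\bar h$ fixing both isotropic directions. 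By the global Torelli theorem, up to the sign $\pm\id$ (the shift) and an element of the Weyl group generated by reflections $s_\delta$ in $(-2)$-classes $\delta$, the isometry $\bar h$ is induced by an isomorphism $f\colon X\sra Y$ of K3 surfaces. The reflections are in turn realized by spherical twists --- along $\ko_C(k)$ for the $(-2)$-curves $C$ in the rank-zero case and along Gieseker-stable spherical bundles in the positive-rank case --- which respect the distinguished components by the second and third items of the preceding theorem. Assembling these factors yields $\Psi$, and hence $\Phi$.

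The hard part will be the orientation bookkeeping in this last step. One must guarantee that the assembled product of isomorphisms, line-bundle twists, shifts and spherical twists realizes the prescribed $\bar h$ (and hence $g$) exactly, rather than an isometry differing by a reflection of the positive-definite four-plane; this is precisely the orientation subtlety that forced the correction of \cite{HLOY2004} in \cite{HuybrechtsMacriStellariOrientation2009}, and it has to be tracked through every factor. A secondary technical point is to check that the $(-2)$-reflections needed to express $\bar h$ can always be chosen among those arising from the \emph{stable} spherical objects covered by the preceding theorem, which amounts to matching the chamber structure on $(-2)$-classes with effective curves and with stable bundles for a suitable polarization.
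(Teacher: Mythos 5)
Your overall strategy---realize a set of generators of $O_{Hodge}^+(\HT(X,\IZ))$ by equivalences already known to respect $Stab^\dagger$---is the same as the paper's, but you diverge on the crucial point: the paper does \emph{not} re-derive the decomposition of an arbitrary $g \in O_{Hodge}^+(\HT(X,\IZ))$ into such generators. It simply cites \cite[Cor.\ 10.13]{HuybrechtsFM}, which states that every element of $O_{Hodge}^+(\HT(X,\IZ))$ is induced by a composition of equivalences of five explicit types (line bundle twists, push-forwards along isomorphisms, Fourier--Mukai transforms from fine two-dimensional moduli spaces of Gieseker-stable sheaves, $T_{\ko_X}$, and $T_{\ko_C}$ for $(-2)$-curves $C$), so the whole proof reduces to one observation: each type respects the distinguished component by Lemma \ref{easy_respect} and Propositions \ref{sheaf_respect}, \ref{spherical_respect}, \ref{curve_respect}. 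You instead attempt to re-prove that decomposition (moduli-space step to carry $g(v_0)$ back to $v_0$, Eichler transvection, Torelli plus Weyl group), and it is exactly there that your write-up has genuine gaps: you yourself flag the ``orientation bookkeeping'' and the realizability of the required $(-2)$-reflections as unresolved. These are not side issues; they are precisely the content of the cited corollary, so as written your argument is an outline of a known proof rather than a proof.

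For the record, the gaps are closable. Orientation: after your reductions the isometry $h_0$ fixes the hyperbolic plane $U=H^0\oplus H^4$ pointwise and is orientation-preserving; since every Hodge isometry preserves the orientation of the plane $\IR\langle Re(\sigma),Im(\sigma)\rangle$, such an $h_0$ automatically maps the positive cone of $NS(Y)$ to itself rather than to its negative, so the problematic correction by $-\id_{H^2}$ (whose $U$-fixing extension is orientation-\emph{reversing}, hence realizable by no equivalence at all---in particular not by the shift, which acts as $-\id$ on all of $\HT$) is never needed, and the remaining discrepancy lies in the Weyl group. Reflections: the Weyl group of $NS(Y)$ is generated by reflections $s_C$ in classes of smooth rational curves, and $s_C$ extended by the identity on $U$ equals $T_{\ko_C(-1)}^H$ (note $v(\ko_C(-1))=(0,C,0)$), which is covered by Proposition \ref{curve_respect}; no twists along stable spherical \emph{bundles} are needed at this stage, so your ``matching chambers with stable bundles'' worry evaporates. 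Finally, a smaller slip: when $g(v_0)$ has rank zero, a shift and a line-bundle twist cannot produce positive rank (both preserve the condition $r=0$); you also need $T_{\ko_X}$, the reflection in $(1,0,1)$, exactly as in the paper's Proposition \ref{reduction_to_lvl}. With these three points supplied, your argument becomes a correct, self-contained alternative to the citation; without them it does not yet establish the proposition.
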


Another direct consequence is the description (\ref{KM0_formula}) of
the K\"ahler moduli space, cf. Corollary \ref{double_quot}.

\subsection*{Acknowledgment}
 This work is part of a Ph.D. thesis written under the supervision of Prof. D. Huybrechts in Bonn
 whom we thank sincerely for his generous support.
 
 We thank T. Bridgeland, E. Looijenga,  N. Perrin, M. Rapoport, D. van Straten  for their help 
 with various questions 
 and  P. Sosna for his careful reading of earlier versions
 of this text.
 We thank the {\em Bonn International Graduate School in Mathematics} and the {\em Sonderforschungsbereich TR45} for financial support.

%%% Local Variables: 
%%% mode: latex
%%% TeX-master: "Cusps"
%%% End: 

\subsection{Notation}
\label{sec:notation}
Our notation will largely follow Huybrechts' book \cite{HuybrechtsFM}.
Let $X$ be a projective K3 surface over the complex numbers. The
Picard rank of $X$ is denoted by $\rho(X)=rk(NS(X))$.

We write $\tilde{\HH}(X,\IZ)$ for the full cohomology 
$\HH^0(X,\IZ) \vsum \HH^2(X,\IZ) \vsum \HH^4(X,\IZ)$
endowed with the Mukai pairing
$ (r,l,s).(r',l',s') = l.l'-rs'-r's$, 
and the weight-two Hodge structure
\[ \HT^{1,1}(X)=\HH^{0,0}(X) \vsum \HH^{1,1}(X) \vsum \HH^{2,2}(X), \] 
\[ \HT^{2,0}(X)=\HH^{2,0}(X), \quad \HT^{0,2}(X)=\HH^{0,2}(X). \] 
We write $ N(X)=\HH^0(X,\IZ) \vsum NS(X) \vsum \HH^4(X,\IZ)$ for the
extended N\'eron--Severi group.  It is an even lattice of signature
$(2,\rho(X))$.

To a sheaf $A \in Coh(X)$ we associate the Mukai vector
\[ v(A) = \sqrt{\td(X)}.ch(A)=(r(A),c_1(A),s(A)) \in N(X), \]
where $s(A)=\frac{1}{2}c_1(A)^2-c_2(A)+r(A)$.
By the Riemann--Roch theorem we have $-\chi(A,B) = v(A).v(B).$
Therefore, we can identify $N(X)$ with the numerical Grothendieck
group $N(Coh(X))=K(Coh(X))/\mathrm{rad}(\chi)$ via the map 
$A \mapsto v(A)$. 

We denote by $\cd^b(X)$ the bounded derived category of coherent
sheaves on $X$.  We have natural isomorphisms between the numerical
Grothendieck groups
$N(\kd^b(X)) \isom N(Coh(X)) \isom N(X)$.
Every $\IC$-linear, exact equivalence $\Phi:\cd^b(X)
\ra \cd^b(Y)$  induces a Hodge isometry which we denote by
\[ \Phi^H: \HT(X,\IZ)\lra \HT(Y,\IZ).\] 
We say that two K3 surfaces
$X$ and $Y$ are derived equivalent if $\cd^b(X)$ is equivalent to
$\cd^b(Y)$ as a $\IC$-linear, triangulated category.

%%% Local Variables: 
%%% mode: latex
%%% TeX-master: "Cusps"
%%% End: 

\section{Geometry of the Mukai lattice}\label{sec:Lattice}
Let $X$ be a projective K3 surface over the complex numbers and
$\kd^b(X)$ its derived category. Let $N=N(\kd^b(X))$ be the 
numerical Grothendieck group of $\kd^b(X)$.
In this section, we will introduce various groups and spaces that are
naturally associated to the lattice $N$.

The isomorphism $N \isom N(X)$ gives us the following 
extra structures.
\begin{enumerate}
\item An isotropic vector $v_0=(0,0,1) \in N$.
\item An embedding of a hyperbolic plane \[\vphi: U\isom H^0(X,\IZ)
  \vsum H^4(X,\IZ) \ra N.\]
\item The choice of an ample chamber $Amp(X) \subset NS(X)_\IR \subset N_\IR$.
\item A weight-two Hodge structure $\HT(X,\IZ)$ with $\HT^{1,1}(X) \cap \HT(X,\IZ)=N$.
  In particular, a group action of $O_{Hodge}(\HT(X,\IZ))$ on $N$.
\end{enumerate}
We will pay special attention to which constructions depend on what
additional data.

\begin{conv}\label{conv}
  In later sections, when we have fixed an identification $N=N(X)$, we will
  allow ourselves to abuse the notation by filling in the standard choices of
  the above extra structures. For example, we shall write $\kl(X)$ for the space
  $\kl(N(X),v_0,Amp(X))$ introduced in Definition \ref{plus_defn}.
\end{conv}

\newcommand{\pl}{\theta}
\subsection{The K\"ahler period domain}

Let $N$ be a non-degenerate lattice of signature $(2,\rho)$. 

\begin{defn}
  We define the {\em K\"ahler period domain} to be
  \[ \pd(N) = \Set{ [z] \in \IP(N_\IC) }{ z^2 =0, \; z.\bar{z} > 0} \subset \IP(N_\IC). \]
  We also introduce the following open subset of $N_\IC$:
  \[ \kp(N) =  \Set{ z \in N_\IC }{ \IR\<Re(z),Im(z)\> \subset N_\IR\, 
    \text{ is a positive $2$-plane} } \subset N_\IC. \]
  This set carries a natural free $Gl_2(\IR)$-action by identifying $N_\IC = N \tensor_\IZ \IC$  
  with $N \tensor_\IZ \IR^2$.
\end{defn}

\begin{lem}\label{grass_lemma}
  There is a canonical map
  \[ \pl :\kp(N) \lra \pd(N) \]
  which is a principal $\GL_2^+$-bundle.
\end{lem}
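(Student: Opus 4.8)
The plan is to exhibit $\pl$ as the quotient map of the $\GL$-action, thereby identifying $\pd(N)$ with the space of \emph{oriented} positive-definite $2$-planes in $N_\IR$. First I would construct $\pl$ directly. Given $z \in \kp(N)$ write $x = \mathrm{Re}(z)$, $y = \mathrm{Im}(z)$, so that $P_z := \IR\langle x,y\rangle$ is a positive $2$-plane and the Gram matrix of the ordered basis $(x,y)$ is positive definite. Hence there is an $h \in \GL$ for which $(hx,hy)$ is an oriented orthonormal basis of $P_z$; equivalently $w := h\cdot z$ satisfies $w.w=0$ and $w.\bar w>0$, so that $[w]\in\pd(N)$, and I set $\pl(z):=[w]$. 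To see this is well defined, observe that any two normalizing elements differ by an element of $SO(2)\subset\GL$, which under the identification $N_\IC = N\otimes_\IZ\IR^2$ acts on $w$ by multiplication with a scalar $e^{i\alpha}\in\IC^\ast$; thus $[w]\in\IP(N_\IC)$ is independent of the choice. The same computation shows that the conformal subgroup $\IC^\ast\subset Gl_2(\IR)$ acts on $N_\IC$ by scalars, from which $\pl(g\cdot z)=\pl(z)$ for every $g\in\GL$.

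Next I would record the elementary properties. If $z$ already satisfies $z.z=0$ and $z.\bar z>0$, then $z\in\kp(N)$ and $\pl(z)=[z]$; in particular $\pl$ is surjective and is just projectivization on the isotropic vectors. For the fibres, suppose $\pl(z)=\pl(z')$. Then the isotropic vectors $h z$ and $h' z'$ lie on the same complex isotropic line of $(P_z)_\IC=(P_{z'})_\IC$, hence are proportional, and unwinding the normalizations places $z$ and $z'$ in a single $\GL$-orbit; conversely each fibre is exactly one orbit. Freeness of the action is already part of the Definition, so $\GL$ acts freely and transitively on fibres. (As a sanity check, $z$ and $\bar z$ carry opposite orientations and map to the two components of $\pd(N)$, which are interchanged by $Gl_2(\IR)\setminus\GL$.)

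It remains to prove local triviality, which I expect to be the main technical point. For local sections I would cover $\pd(N)\subset\IP(N_\IC)$ by the standard affine charts $U_\ell=\{[z]:\ell(z)\neq 0\}$ for linear forms $\ell$ on $N_\IC$; on $U:=U_\ell\cap\pd(N)$ the assignment $p\mapsto s(p):=z/\ell(z)$ (for any representative $z$ of $p$) is a holomorphic map $s:U\to N_\IC$ with $s(p).s(p)=0$ and $s(p).\overline{s(p)}>0$, so $s(p)\in\kp(N)$ and $\pl(s(p))=p$. Then I would define
\[
 \Psi: U\times\GL \lra \pl^{-1}(U), \qquad \Psi(p,g)=g\cdot s(p),
\]
which is smooth, lands in $\pl^{-1}(U)$ because $\pl(g\cdot s(p))=\pl(s(p))=p$, and intertwines the $\GL$-actions. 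By the fibre description $\Psi$ is bijective, its inverse sending $z$ to $(\pl(z),g)$, where $g$ is the unique element of $\GL$ with $z=g\cdot s(\pl(z))$; since $g$ is the change of basis between the two positively oriented bases $(\mathrm{Re}\,z,\mathrm{Im}\,z)$ and $(\mathrm{Re}\,s,\mathrm{Im}\,s)$ of the common plane $P_z$, it is given by a rational (hence smooth) formula in $z$. Thus $\Psi$ is a diffeomorphism and $\pl$ is a principal $\GL$-bundle. The genuine obstacle is therefore packaging the above linear algebra into honestly smooth trivializations; once the local sections landing in $\kp(N)$ are produced from the projective charts, the freeness-and-transitivity on fibres makes the remaining verifications formal.
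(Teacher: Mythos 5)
Your proof is correct, and it rests on the same geometric fact as the paper's: points of $\pd(N)$ correspond to oriented positive-definite $2$-planes in $N_\IR$, and $\GL$ acts simply transitively on the oriented bases of any such plane. The difference is in packaging. The paper simply cites the isomorphism $\pd(N)\isom Gr^{po}_2(N_\IR)$ (Beauville), defines $\pl(z)$ to be the oriented plane $\IR\<\mathrm{Re}(z),\mathrm{Im}(z)\>$, and asserts the principal-bundle property from simple transitivity, leaving local triviality entirely implicit. You instead define $\pl$ in the opposite direction, by normalizing $z$ to an isotropic representative $w$ of the same oriented plane, and you supply the missing local triviality argument explicitly: the affine-chart sections $s(p)=z/\ell(z)$, which land in $\kp(N)$ because an isotropic vector with $z.\bar z>0$ automatically spans a positive $2$-plane, and the trivialization $\Psi(p,g)=g\cdot s(p)$ whose inverse is rational in $z$. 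This makes your proof self-contained where the paper defers to a reference, at the cost of more linear algebra. One small caution: your justification of $\GL$-invariance, namely that the conformal subgroup $\IC^*\subset Gl_2(\IR)$ acts by scalars, only covers conformal $g$; for general $g\in\GL$ the correct argument is that $g$ preserves the oriented plane $P_z$, so if $h$ normalizes $z$ then $hg^{-1}$ normalizes $g\cdot z$ and produces the same isotropic line. That argument is implicit in your fiber discussion, so this is a gap in exposition rather than in substance.
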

\begin{proof}
  This map is most easily described using the canonical isomorphism $\pd(N)
  \isom Gr^{po}_2(N_\IR)$, where $Gr^{po}_2(N_\IR)$ is the Grassmann manifold of
  positive definite, oriented two-planes in $N_{\IR}$ (cf.\ \cite[VII.\
  Lem.1]{Beauville1985}).  We define $\pl$ to map a vector $[z] \in \kp(N)$ to
  the oriented two-plane $P=\IR\<Re(z),Im(z) \>$. As $\GL$ acts simply and
  transitively on the set of oriented bases of $P$, this map is a principal
  $\GL$-bundle.
\end{proof}

In the case $N=N(X)=H^0(X) \vsum NS(X) \vsum H^4(X)$ there is a well known {\em tube model} of 
the period domain, given by
\[ exp\colon \{ z = x + i y \in NS(X)_\IC \,|\, y^2 > 0 \} \xlra{\isom} \pd(N),\; 
z \mapsto [(1,z,\half z^2 )].   \]
To define this map we used the full information about the embedding $U \isom H^0(X) \vsum H^4(X)$
into $N(X)$. In this section we will construct a similar map, which only depends on the isotropic 
vector $v_0=(0,0,1)$. Compare also \cite[Sec.\ 4]{Dolgachev1996}.

Let $N$ be a non-degenerate lattice of signature $(2,\rho), \rho \geq
1$. To a primitive isotropic vector $v \in N(X)$ we associate the lattice 
\[ L(v) = v^\perp / \IZ v = \Set{z \in N }{ z.v = 0 } / \IZ v \] of
signature $(1,\rho-1)$ and the affine space $ A(v) = \Set{z \in N}{
  z.v = -1 } / \IZ v $ over $L(v)$.  Note that, if $N=N(X)$ and
$v=v_0$, then $L(v) \isom NS(X)$.

\begin{defn}
  We define the { \em tube domain } associated to $N$ and $v$ as 
  \[ T(N,v)= A(v)_\IR \times C(L(v)) \]
  where  $ C(L(v))  = \Set{y \in L(v)_\IR }{y^2 > 0 }$.
  Note that $A(v)_\IR$ is naturally an affine space over $L(v)_\IR$.
  We consider $T(N,v)$ as a subset of $N_\IC/ \IC v$ by mapping $(x,y)$ 
  to $x+iy  \in N_\IC / \IC v.$
  We will often write $x+iy$ for a pair $(x,y) \in T(N,v)$.
\end{defn}

\begin{lem}\label{tube_model}
  There is a canonical map $Exp_v:  T(N,v) \ra  \kp(N)$  such that 
  \[ exp_v = \theta \circ Exp_v: T(N,v) \lra \pd(N) \] 
  is an isomorphism. 
\end{lem}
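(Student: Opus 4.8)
The plan is to imitate the classical tube-model embedding $z \mapsto [(1,z,\tfrac12 z^2)]$, but to phrase it intrinsically so that it uses only the isotropic vector $v$. First I would define $Exp_v$ as follows. Given a point $\zeta = x+iy \in T(N,v) \subset N_\IC/\IC v$, pick any lift $\tilde\zeta \in N_\IC$. Because $x \in A(v)_\IR$ and $y \in C(L(v)) \subset L(v)_\IR$, the defining relations $x.v = -1$ and $y.v = 0$ give $\tilde\zeta.v = -1$ independently of the lift. Since $v^2 = 0$, for $\lambda \in \IC$ the square $(\tilde\zeta + \lambda v)^2 = \tilde\zeta^2 - 2\lambda$ is affine-linear in $\lambda$, so there is a \emph{unique} lift with vanishing square, namely $z := \tilde\zeta + \tfrac12 \tilde\zeta^2\, v$. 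I would check that replacing $\tilde\zeta$ by $\tilde\zeta + \mu v$ leaves $z$ unchanged, so that $Exp_v(\zeta) := z$ is well defined and manifestly canonical (depending on no choice beyond $v$).

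Next I would verify that $z$ lands in $\kp(N)$. Writing $z = p + iq$ with $p = Re(z),\, q = Im(z) \in N_\IR$, the identity $z^2 = 0$ means $p^2 = q^2$ and $p.q = 0$, so membership in $\kp(N)$ — that $\IR\<Re(z),Im(z)\>$ be a positive $2$-plane — is equivalent to $z.\bar z = p^2 + q^2 > 0$. A short computation with a real representative $\tilde\zeta = \tilde x + i\tilde y$ (so $\tilde x.v = -1$, $\tilde y.v = 0$, $\tilde y^2 = y^2$) gives $Re(z)^2 = Im(z)^2 = \tilde y^2$, hence $z.\bar z = 2y^2 > 0$. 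Thus $Exp_v$ maps into $\kp(N)$ and, composing with the bundle map $\theta$ of Lemma \ref{grass_lemma}, the map $exp_v = \theta \circ Exp_v$ lands in $\pd(N)$.

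It remains to show $exp_v$ is an isomorphism, and this is where I expect the genuine work to lie. Injectivity is immediate: if $[z] = [z']$ with $z,z'$ both normalized so that $z.v = z'.v = -1$, the scalar relating them is forced to be $1$, whence $\zeta = \zeta'$. For surjectivity I would first establish the key auxiliary fact that $w.v \neq 0$ for every $[w] \in \pd(N)$: if $w.v = 0$ then $v$ is orthogonal to the positive-definite plane $\IR\<Re(w),Im(w)\>$, forcing $v$ into its negative-definite orthogonal complement, which contains no nonzero isotropic vector — a contradiction. Given this, I normalize $z := -w/(w.v)$, so that $z.v = -1$, $z^2 = 0$ and $z.\bar z > 0$; projecting $z$ to $N_\IC/\IC v$ yields a class $x + iy$, and the relations $z^2 = 0$, $z.\bar z > 0$ together with $Im(z).v = 0$ show precisely that $y \in C(L(v))$ and $x \in A(v)_\IR$, so the class lies in $T(N,v)$. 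By construction $z$ is then its unique isotropic lift, whence $Exp_v$ recovers it and $exp_v(x+iy) = [w]$.

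Finally I would record that $exp_v$ is a biholomorphism. The tube domain $T(N,v)$ is an open subset of the complex-affine hyperplane $\{\zeta \in N_\IC/\IC v : \zeta.v = -1\} \cong L(v)_\IC$, on which $Exp_v$ is given by the polynomial formula above, while the inverse $[w] \mapsto -w/(w.v) \bmod \IC v$ is rational with nowhere-vanishing denominator on $\pd(N)$ by the auxiliary fact. Hence both directions are holomorphic and $exp_v$ is an isomorphism. The only non-formal ingredient is the signature argument guaranteeing $w.v \neq 0$ and the positivity of the imaginary part of the normalized lift; everything else reduces to the direct computations indicated above.
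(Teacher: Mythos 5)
Your proof is correct and follows essentially the same route as the paper: your image set $\Set{z \in N_\IC}{z^2=0,\, z.\bar z>0,\, z.v=-1}$ is exactly the section $Q(v)$ of the $\GL$-action that the paper uses, and your unique-isotropic-lift formula $z=\tilde\zeta+\tfrac12\tilde\zeta^2 v$ is precisely the inverse of the projection $Q(v)\ra T(N,v)$ that the paper invokes. The only difference is that you spell out the details (well-definedness, the signature argument for $w.v\neq 0$, injectivity/surjectivity) which the paper compresses into ``is a section'' and ``one checks immediately.''
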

\begin{proof}
  We construct the inverse to $Exp_v$. The set
  \[ Q(v) = \Set{ z \in N_\IC }{z^2 =0, z.\bar{z}>0, z.v=-1 } \subset \kp(N) \]
  is a section for the $\GL$-action on $\kp(N)$. 

  One checks immediately, that the projection $N_\IC \ra N_\IC/ \IC v$ 
  induces an isomorphism $Q(v) \ra T(N,v)$. Define $Exp_v$ to be the inverse 
  of this isomorphism.
\end{proof}

\begin{rem}\label{q_v_remark}
  In particular, we obtain a section of the $\GL$-bundle $\theta$,
  namely $q_v=Exp_v \circ exp_v^{-1}: \pd(N) \ra \kp(N)$.
\end{rem}

\begin{lem}\label{equivariance}
  Let $g \in O(N)$ be an isometry of $N$, then $g$ induces a commutative diagram
  \[ \xymatrix{ 
    T(N,v) \ar[d]_g \ar[r]^{Exp_v} & \kp(N) \ar[d]^g\ar[r]^\pl & \pd(N) \ar[d]^g  \\ 
    T(N,w)          \ar[r]^{Exp_w} & \kp(N)       \ar[r]^\pl & \pd(N),
  } \]
  where $w=g \cdot v$. \hfill \qed
\end{lem}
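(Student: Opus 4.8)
The plan is to check that $g$ induces the three vertical arrows and then verify the two squares separately. Since $g \in O(N)$ is an isometry with $w = g\cdot v$, it carries $v^\perp$ isomorphically onto $w^\perp$ and $\IZ v$ onto $\IZ w$, hence descends to an isometry $L(v) \xra{\sim} L(w)$ and to an affine isomorphism $A(v) \xra{\sim} A(w)$ (if $z.v = -1$ then $(gz).w = z.v = -1$). As $g$ preserves the induced quadratic form, it maps the positive cone $C(L(v))$ to $C(L(w))$, so the left vertical map $T(N,v) \ra T(N,w)$ is well defined; on $N_\IC/\IC v$ it agrees with the map descending from $z \mapsto gz$, which sends $\IC v$ to $\IC w$. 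On $\kp(N)$ and $\pd(N)$ the vertical maps are the $\IC$-linear, resp.\ projective-linear, extensions of $g$; since $g$ preserves $z^2$ and, being a real operator, also $z.\bar z$, it preserves both $\kp(N)$ and $\pd(N)$.

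For the right-hand square I would work through the identification $\pd(N) \isom Gr^{po}_2(N_\IR)$ of Lemma \ref{grass_lemma}, under which $\pl(z) = \IR\langle Re(z), Im(z)\rangle$ as an oriented plane. Because $g$ is real, $Re(gz) = g\,Re(z)$ and $Im(gz) = g\,Im(z)$, whence $\pl(gz) = \IR\langle g\,Re(z), g\,Im(z)\rangle = g\cdot\pl(z)$, where $g$ on the right acts on oriented planes by pushforward. This yields $\pl \circ g = g \circ \pl$.

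For the left-hand square I would use the description of $Exp_v$ in Lemma \ref{tube_model} as the inverse of the projection isomorphism $Q(v) \xra{\sim} T(N,v)$, where $Q(v) = \Set{z \in N_\IC}{z^2 = 0,\ z.\bar z > 0,\ z.v = -1}$. The key point is that $g$ maps $Q(v)$ to $Q(w)$: for $z \in Q(v)$ one has $(gz)^2 = z^2 = 0$, $(gz).\overline{gz} = z.\bar z > 0$, and $(gz).w = (gz).(gv) = z.v = -1$. Since $g$ also intertwines the projections $N_\IC \ra N_\IC/\IC v$ and $N_\IC \ra N_\IC/\IC w$ (and the descended map restricts to the vertical $T(N,v) \ra T(N,w)$), for $(x,y) \in T(N,v)$ the vector $g\cdot Exp_v(x,y)$ lies in $Q(w)$ and projects to $g\cdot(x+iy)$. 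By uniqueness of the preimage under $Q(w) \xra{\sim} T(N,w)$ it must equal $Exp_w(g\cdot(x,y))$, giving $Exp_w \circ g = g \circ Exp_v$.

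None of these steps is genuinely hard; each identity is a one-line consequence of $g$ being an isometry that commutes with complex conjugation. The only place that deserves care is the orientation bookkeeping in the Grassmannian model: one must confirm that the projective-linear action of $g$ on $\pd(N)$ matches the pushforward action on oriented positive planes without introducing a sign. This is automatic precisely because $g$ acts by the \emph{same} real-linear map on $Re(z)$ and $Im(z)$, so the ordered basis $(Re(z), Im(z))$ is carried to $(g\,Re(z), g\,Im(z))$ with its orientation intact, independently of whether $g$ globally preserves or reverses orientation of positive planes.
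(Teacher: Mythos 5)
Your proof is correct, and it fills in exactly the routine verifications that the paper omits: the lemma is stated with a \qed and no proof, the point being that every square commutes by one-line computations from $g$ being a real isometry with $g\cdot v = w$. Your two key steps --- that $g$ carries the section $Q(v)$ to $Q(w)$ and hence intertwines the $Exp$ maps, and that $Re(gz)=g\,Re(z)$, $Im(gz)=g\,Im(z)$ handles the Grassmannian square including orientations --- are precisely the intended argument, so there is nothing to add.
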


\subsection{Roots, Walls and Chambers}

Recall that, every lattice $N$ determines a {\em root-system} $ \Delta(N)=\Set{ \delta \in N
}{ \delta^2 = -2 }. $ To every root $\delta \in \Delta(N)$ there is an
associated {\em reflection}, $s_\delta: w \mapsto w + (\delta.w) \delta$ which
is an involutive isometry.  The subgroup $W(N) \subset O(N)$ generated by the
reflections is called {\em Weyl group}.

If we are given an isotropic vector $v \in N$ we define
\[ \NDel(N,v) = \Set{ \delta \in \Delta(N) }{ -v.\delta > 0 },
\quad \Delta^0(N,v) = \Set{ \delta \in \Delta(N) }{ v.\delta = 0 }. \]

The group generated by the reflections $\Set{s_\delta}{ \delta \in
  \Delta^0(N,v)}$ is denoted by $W^0(N,v)$.

\begin{defn}
  To $\delta \in \Delta(N)$ we associate a divisor
  \[ D(\delta)=\Set{[z]}{z.\delta = 0} \subset \pd(N) \]
  and define $ \pd_0(N) = \pd(N) \setminus \bigcup \Set{D(\delta)}{\delta \in \Delta(N)} . $
\end{defn}
The connected components of $\pd(N) \isom T(N,v)$ are clearly
contractible. In contrast the space $\pd_0(N)$ is the complement of a
infinite number of hypersurfaces and will therefore in general not
even have a finitely generated fundamental group.  Following
Bridgeland (cf.\ Remark \ref{U_boundary_rem}) we will decompose
$\pd_0(N)$ into a union of codimension-one submanifolds called {\em
  walls} and their complements called {\em chambers} such that each
individual chamber is contractible.

\begin{defn}\label{wall_notation}
  Given $\delta \in \NDel(N,v)$ and a primitive, isotropic vector $v \in N$, we define
  a real, codimension one submanifold, called  {\em wall}
  \[ W_A(\delta,v) = \Set{ [z] \in \pd(N) }{ - z.\delta / z.v \in \IR_{\leq 0} } \subset \pd(N). \]
  To a vector $\delta \in \Delta^0(N,v)$ we associate the wall
  \[ W_C(\delta,v) = \Set{ [z] \in \pd(N) }{  - z.\delta / z.v \in \IR }. \]
  One can check, that $W_C(\delta,v)$ only depends on the image $l$ of $\delta$ 
  in $L(v)=v^{\perp}/\IZ v$. Therefore we write also $W_C(l,v)$ for $W_C(\delta,v)$.
  We define
  \begin{align*} 
    \pd_{A}(N,v) &= \pd(N) \setminus \bigcup \Set{W_A(\delta,v)}{\delta \in \NDel(N,v)} \\
    \pd_{C}(N,v) &= \pd(N) \setminus \bigcup \Set{W_C(\delta,v)}{\delta \in \Delta^0(N,v)}.
  \end{align*}
  We denote the intersections $\pd_{0}(N) \cap \pd_{A}(N,v), \pd_{A}(N,v) \cap \pd_{C}(N,v),$ etc.\
  by $\pd_{0,A}(N,v),\pd_{A,C}(N,v),$ etc., respectively. 
  For any combination $*$ of the symbols $0,A,C$ we set
  \[ T_*(N,v)=exp_v^{-1}(\pd_*(N,v)). \]
\end{defn}

\begin{rem}
  The seemingly unnatural notation, $-z.\delta/z.v \in \IR_{\leq 0}$, is 
  chosen since for $z=Exp_v(x + i y)$ we have $z.v  = -1$, and hence
  $-z.\delta/z.v=z.\delta$. 
  
  The sets considered above are indeed complex manifolds as the unions 
  of $D(\delta), W_A(\delta,v)$ and $W_C(l,v)$ are locally finite
  \cite[Lem.\ 11.1]{BridgelandK3}.

  If $\delta \in \Delta^0(N,v)$, then $D(\delta) \subset W_C(\delta,v)$, and if
  $\delta \in \pm \Delta^{>0}(N,v)$, then $D(\delta) \subset W_A(\delta,v)$.
  Therefore, \[ \pd_{A,C}(N,v) \subset  \pd_{0}(N,v). \]
\end{rem}

\begin{lem}\label{L_crit} \cite[Lem.\ 6.2, Lem.\ 11.1]{BridgelandK3}
  Let 
  \[ \pd_{>2}(N,v) = \Set{exp_v(x + i y) \in \pd(N) }{ y^2 > 2 } \subset \pd(N)  \]
  and denote by $\pd_{0,>2}(N,v)=\pd_{>2}(N,v) \cap \pd_{0}(N,v)$ etc.\ the various intersections.

  Then \[ \pd_{>2}(N,v) \subset \pd_{A}(N,v) \]
  and the inclusions
  $ \pd_{>2}(N,v) \subset \pd_{A}(N,v)$, $\pd_{0,>2}(N,v) \subset
  \pd_{0,A}(N,v)$ and  $\pd_{0,C,>2}(N,v) \subset \pd_{0,C,A}(N,v)$ 
  are deformation retracts.
\end{lem}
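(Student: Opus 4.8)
The plan is to reduce everything to an explicit computation in the tube coordinates of Lemma \ref{tube_model}, following \cite{BridgelandK3}. First I would fix an isotropic vector $u \in N_\IR$ with $u.v = -1$ and split $N_\IR = \IR u \oplus \IR v \oplus L_\IR$ with $L_\IR \isom L(v)_\IR$, so that $z = Exp_v(x+iy)$ is $z = u + w + \half w^2 v$ with $w = x+iy \in L_\IC$ (one checks $z.v = -1$ and $z^2 = 0$). Expanding the pairing against a root $\delta = a u + b v + \delta_L$, where $a = -v.\delta \in \IZ$ and $\delta_L \in L_\IR$, would give
\[ z.\delta = \left(-b + x.\delta_L - \tfrac a2 (x^2 - y^2)\right) + i\,(y.\delta_L - a\,(x.y)), \]
with the root relation $\delta^2=-2$ reading $\delta_L^2 - 2ab = -2$. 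Since $z.v=-1$, membership of $[z]$ in $W_A(\delta,v)$ is the condition $z.\delta \in \IR_{\leq 0}$, and membership in $W_C(\delta,v)$ (for $\delta \in \Delta^0(N,v)$, i.e.\ $a=0$) is $z.\delta \in \IR$, that is, the vanishing of the imaginary part above.

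Next I would establish the inclusion $\pd_{>2}(N,v) \subset \pd_A(N,v)$. Take $y^2 > 2$ and $\delta \in \NDel(N,v)$, so $a \geq 1$, and put $m = \delta_L - a x \in L_\IR$, for which the imaginary part of $z.\delta$ equals $y.m$. If $[z]$ lay on $W_A(\delta,v)$ this would vanish, giving $m \perp y$; since $L(v)$ has signature $(1,\rho-1)$ and $y^2 > 0$, the orthogonal complement of $y$ is negative definite, so $m^2 \leq 0$. Eliminating $b$ and completing the square, the real part would become
\[ \mathrm{Re}(z.\delta) = -\frac{m^2}{2a} - \frac1a + \frac a2\, y^2. \]
Here $-m^2/(2a) \geq 0$ while $\tfrac a2 y^2 - \tfrac1a > a - \tfrac1a \geq 0$ because $y^2 > 2$ and $a \geq 1$; thus $\mathrm{Re}(z.\delta) > 0$, contradicting $z.\delta \in \IR_{\leq 0}$. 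This positivity, forced by the signature of $L(v)$, is really the heart of the lemma.

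For the three deformation retractions I would use the flow $\Phi_s(x,y) = (x, e^s y)$, $s \geq 0$, which raises $y^2$ and eventually pushes every point into $\{y^2 > 2\}$; the content is to check that scaling $y$ up never crosses a forbidden wall. For $z_s = Exp_v(x + i e^s y)$ the imaginary part of $z_s.\delta$ is $e^s(y.m)$ and the real part is $-m^2/(2a) - 1/a + \tfrac a2 e^{2s} y^2$ when $a \neq 0$, resp.\ $-b + x.\delta_L$ when $a = 0$. For a $C$-wall ($a=0$) the defining condition $y.\delta_L = 0$ is scale-invariant, so $\pd_C$ is preserved. For an $A$-wall ($a \geq 1$): if $y.m \neq 0$ the imaginary part stays nonzero for all $s$, and if $y.m = 0$ then $m^2 \leq 0$ and the real part is strictly increasing in $s$, so a point starting off $W_A$ (real part $>0$) stays off it; hence $\pd_A$ is preserved. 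For a divisor $D(\delta)$ with $a \neq 0$ one has $D(\delta) \subset W_A(\pm\delta,v)$ (Remark after Definition \ref{wall_notation}), so avoiding the removed $A$-walls already keeps us off these divisors, while for $a=0$ the vanishing $z_s.\delta = 0$ again forces the scale-invariant condition $y.\delta_L = 0$; thus $\pd_0$ is preserved simultaneously with $A$ and $C$. Reparametrizing the flow so that each point is carried past the level $y^2 = 2$ then yields the asserted retractions.

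The main obstacle will be the $\pd_{0,A}$ case, where one must avoid the $A$-walls and all root-divisors $D(\delta)$ at once: the reason this works is that for $a \neq 0$ the divisor $D(\delta)$ is already contained in the removed $A$-wall $W_A(\pm\delta,v)$, so the $D$-avoidance is subsumed by the $A$-condition, whereas the $a=0$ divisors and the $C$-walls are scale-invariant; the only genuinely moving constraint — the real part on an $A$-wall — moves monotonically in the favorable direction by the very signature inequality used for the inclusion. Continuity of the homotopy is ensured by the local finiteness of the wall-and-divisor system \cite[Lem.\ 11.1]{BridgelandK3}, and the minor point that $\pd_{>2}(N,v)$ is open is handled by pushing each point slightly beyond $y^2 = 2$ before stopping the flow.
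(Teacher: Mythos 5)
Your proof is correct and is, in effect, the proof the paper relies on: the paper offers no argument of its own for this lemma but defers to Bridgeland's Lemmas 6.2 and 11.1, whose proofs consist of exactly your steps --- the tube-coordinate expansion of $z.\delta$ with the root relation $\delta_L^2 - 2ab = -2$, the signature argument giving $m^2 \leq 0$ and hence $\mathrm{Re}(z.\delta) = -\tfrac{m^2}{2a} - \tfrac{1}{a} + \tfrac{a}{2}y^2 > 0$ when $y^2 > 2$, and the scaling flow $y \mapsto e^s y$ which preserves the $A$-, $C$- and $0$-conditions for the reasons you state. The one caveat is inherited from the statement itself rather than introduced by you: since $\{\,y^2>2\,\}$ is open and its boundary points can lie in $\pd_A(N,v)$, no continuous retraction can fix it pointwise, so what your flow (like Bridgeland's) really produces is a homotopy preserving the subset and ending inside it --- i.e.\ a homotopy equivalence, which is all that is used later in the paper.
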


Recall that, $L=L(v)$ is a lattice of signature $(1,\rho -1)$.
We defined $C(L)=\Set{y \in L_\IR }{ y^2 > 0 }$ to be the {\em positive cone}.
This space has two connected components, let $C(L)^+$ be one of them.
For $l \in \Delta(L)$ define a {\em wall} $W(l)=\Set{ y \in C(L) }{ y.l = 0 }$ and set 
$C(L)_0 = C(L) \setminus \bigcup \Set{W(l)}{l \in \Delta(L) }.$
Connected components of $C(L)_0$ are called {\em chambers}.

\begin{lem}
  We have
  \[  T_{C}(N,v) = A(v)_{\IR} \times C_0(L(v)). \]
  Moreover, the connected components of $\pd_{0,A,C}(N,v) \subset \pd_0(N)$ are contractible. 
\end{lem}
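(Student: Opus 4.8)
\emph{The plan is to prove the two assertions separately.} For the first identity I would compute directly in tube coordinates. Fix $\delta \in \Delta^0(N,v)$ and let $l \in L(v)$ denote its image. Since $\delta \in v^\perp$, the functional $z \mapsto z.\delta$ on $N_\IC$ factors through $N_\IC/\IC v$, so I may evaluate it on $z = Exp_v(x+iy)$ through any lift; taking lifts $\tilde x, \tilde y \in N_\IR$ with $\tilde x.v = -1$ and $\tilde y \in v^\perp_\IR$ gives $z.\delta = \tilde x.\delta + i\,\tilde y.\delta$. As $\tilde y$ and $\delta$ both lie in $v^\perp$, their pairing descends to $L(v)$ and equals $y.l$, so $\mathrm{Im}(z.\delta) = y.l$. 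Because $z \in Q(v)$ we have $z.v = -1$ (Lemma \ref{tube_model}), hence the defining condition $-z.\delta/z.v \in \IR$ of $W_C(\delta,v)$ reads $z.\delta \in \IR$, i.e.\ $y.l = 0$. Thus $exp_v^{-1}(W_C(\delta,v)) = A(v)_\IR \times W(l)$. As $\delta$ runs through $\Delta^0(N,v)$ its image $l$ runs through all of $\Delta(L(v))$ (a root $l$ lifts to some $\delta_0 \in v^\perp$ with $\delta_0^2 = l^2 = -2$), so deleting all these walls from $T(N,v) = A(v)_\IR \times C(L(v))$ leaves $T_C(N,v) = A(v)_\IR \times C_0(L(v))$.

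\emph{For the contractibility statement the plan is to discard the $0$- and $A$-walls and retract into the region $y^2 > 2$, where the geometry becomes convex.} By the remark preceding Lemma \ref{L_crit} we have $\pd_{A,C}(N,v) \subset \pd_0(N)$, whence $\pd_{0,A,C}(N,v) = \pd_{A,C}(N,v)$. Lemma \ref{L_crit} then provides a deformation retract of $\pd_{0,A,C}(N,v) = \pd_{0,C,A}(N,v)$ onto $\pd_{0,C,>2}(N,v)$. Since $\pd_{>2}(N,v) \subset \pd_A(N,v)$ (Lemma \ref{L_crit}) and $\pd_{A,C} \subset \pd_0$, one gets $\pd_{C,>2} \subset \pd_0$ and hence $\pd_{0,C,>2}(N,v) = \pd_{C,>2}(N,v)$; under $exp_v$ and the first part this is $T_{C,>2}(N,v) = A(v)_\IR \times \Set{y \in C_0(L)}{y^2 > 2}$. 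A deformation retract is a homotopy equivalence that restricts to one on each connected component, so it suffices to show every component of $\Set{y \in C_0(L)}{y^2 > 2}$ is contractible, the contractible affine factor $A(v)_\IR$ playing no role.

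\emph{It then remains to settle an elementary convex-geometry question in the positive cone.} Each component $\Gamma$ of $C_0(L)$ is a chamber, and I claim it is a convex cone inside a single sheet $C(L)^+$. The forward cone is convex, since for forward vectors $y,y'$ the reverse Cauchy--Schwarz inequality gives $y.y' > 0$ and hence $(y+y')^2 > 0$; and on the connected set $\Gamma$ each linear form $y \mapsto y.l$ $(l \in \Delta(L))$ has constant sign, so $\Gamma = C(L)^+ \cap \bigcap_l \Set{y}{\eps_l\, y.l > 0}$ is an intersection of convex cones. Radial projection $y \mapsto y/\sqrt{y^2}$ is then a homeomorphism $\Set{y \in \Gamma}{y^2 > 2} \isom (\Gamma \cap \IH) \times (\sqrt 2,\infty)$, where $\IH = \Set{y \in C(L)^+}{y^2 = 1}$ is the hyperboloid model of hyperbolic space. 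The slice $\Gamma \cap \IH$ is geodesically convex --- the minimizing geodesic from $p$ to $q$ consists of the normalized vectors $ap+bq$ with $a,b \geq 0$, which stay in the convex cone $\Gamma$ --- hence contractible, and $(\sqrt 2,\infty)$ is contractible. Therefore $\Set{y \in \Gamma}{y^2 > 2}$ is contractible, which completes the argument.

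\emph{The main obstacle} is exactly the contractibility after removing the infinitely many $A$-walls $W_A(\delta,v)$: inside a single $T_C$-chamber their complement need not be convex, so one cannot reason directly. Retracting into $\Set{y}{y^2 > 2}$, where by Lemma \ref{L_crit} all $A$-walls have disappeared, is the device that reduces the question to the convex geometry above. The only point requiring care is the bookkeeping that $\pd_{0,A,C} = \pd_{A,C}$ and $\pd_{0,C,>2} = \pd_{C,>2}$, so that the deformation retract of Lemma \ref{L_crit} applies verbatim and transports contractibility back to $\pd_{0,A,C}(N,v)$.
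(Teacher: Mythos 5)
Your proposal is correct and takes essentially the same route as the paper: establish the product decomposition by direct computation in tube coordinates, then use the deformation retract from Lemma \ref{L_crit} onto the region $y^2>2$, where contractibility of each component follows from convexity of the chambers in the positive cone. The paper's own proof is a three-sentence compression of exactly this argument; you have simply supplied the calculation, the bookkeeping identities $\pd_{0,A,C}=\pd_{A,C}$ and $\pd_{0,C,>2}=\pd_{C,>2}$, and the convexity details that it leaves implicit.
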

\begin{proof}
  The first assertion is a direct calculation.  It follows, that
  the connected components of $\pd_{C}(N,v)$ and $\pd_{C,>2}(N,v)$
  are contractible.  Now, the components of $\pd_{0,A,C}(N,v)$ are
  contractible since Lemma \ref{L_crit} shows that $\pd_{C,>2}(N,v)
  \subset \pd_{0,A,C}(N,v)$ is a deformation retract.
\end{proof}

\begin{defn}\label{plus_defn}
  If we are given a chamber $Amp \subset C(L(v))_0$ we
  define $\kl(N,v,Amp) \subset \pd_{0,A,C}(N,v)$ to be the connected component
  containing the vectors $exp_v(x + iy)$ with $y \in Amp, y^2 > 2$.

  The connected component of $\pd(N)$ containing $\kl(N,v,Amp)$ is denoted 
  by $\pd^+(N)$. We introduce also the notation 
  $\pd^+_{*}(N) = \pd^+(N) \cap \pd_{*}(N,v)$ for a combination $*$ of the symbols $0,A,C,>2$.

  The orthogonal group $O(N)$ acts on $\pd(N)$. 
  We denote by $O^+(N)$ be the index two subgroup preserving the connected
  components of $\pd(N)$.
\end{defn}

\begin{rem}
  The set $\kl(N,v,Amp)$ can be described more explicitly as
  $\Set{ exp(x+ iy) \in \pd(X) }{ y \in Amp(X), (*) }$
  where $(*)$ is the condition
  \[ Exp(x+iy).\delta \notin \IR_{\leq 0} \qtext{for all} \delta \in \Delta^{>0}(N(X),v).\]
  This is the description used in \cite{BridgelandK3}.
\end{rem}

%%% Local Variables: 
%%% mode: latex
%%% TeX-master: "Cusps"
%%% End: 

\section{Ma's Theorem}\label{sec:Ma}
The goal of this section is to explain Ma's theorem about cusps of the
K\"ahler moduli space of a K3 surface (\cite{Ma1}, \cite{Ma2}).

We use the recent result \cite{HuybrechtsMacriStellariOrientation2009}
to make the construction of the K\"ahler moduli space intrinsic to the
derived category. This allows us to formulate Ma's theorem in a more
symmetric way.

\subsection{The K\"ahler moduli space}\label{subsec:intrinsic}
Recall from Definition \ref{plus_defn}, that $\pd(X)=\pd(N(X))$ has a distinguished connected 
component $\pd^+(X)$ containing the vectors $exp(x+iy)$ with $y \in NS(X)$ ample.
The key ingredient for our construction of $KM(\kt)$ is the following theorem.
\begin{thm}\cite{HLOY2004},\cite{PloogPHD}, \cite[Cor.\ 4.10]{HuybrechtsMacriStellariOrientation2009}  \label{orientation} \label{Or}
  The image of
  \[ Aut(\cd^b(X)) \lra O_{Hodge}(\tilde{H}(X,\IZ)) \]
  is the index-two subgroup $O_{Hodge}^+(\tilde{H}(X,\IZ))$ of isometries preserving 
  the component $\pd^+(X)\subset \pd(X)$.

  Let $\Phi:\cd^b(X) \ra \cd^b(Y)$ be a derived equivalence between two K3 surfaces.
  Then the isomorphism $\Phi^H: \pd(X) \ra \pd(Y)$ maps $\pd^+(X)$ to $\pd^+(Y)$.
\end{thm}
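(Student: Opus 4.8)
The plan is to identify preservation of the distinguished component $\pd^+(X)$ with an orientation condition on the full Mukai lattice, to prove the resulting equality of the image with $O^+_{Hodge}(\HT(X,\IZ))$ by two separate inclusions, and finally to deduce the geometric statement about $\Phi\colon\cd^b(X)\sra\cd^b(Y)$ from the autoequivalence case.

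First I would make the reduction. Any $g\in O_{Hodge}(\HT(X,\IZ))$ fixes the line $\HT^{2,0}(X)$, hence preserves the real positive $2$-plane $P=\langle \mathrm{Re}\,\sigma_X,\mathrm{Im}\,\sigma_X\rangle$ spanned by the period together with its natural orientation; it also preserves $N(X)_\IR=\HT^{1,1}(X,\IZ)\otimes\IR$. Using the orthogonal splitting $\HT(X,\IR)=N(X)_\IR\oplus N(X)^{\perp}_{\IR}$, the facts that $P\subset N(X)^{\perp}_{\IR}$ and that the complement of $N(X)_\IR$ inside $\HT^{1,1}(X,\IR)$ is negative definite, a maximal positive subspace of $\HT(X,\IR)$ can be written $P\oplus P'$ with $P'$ a positive $2$-plane in $N(X)_\IR$, so its orientation is the product of the (fixed) orientation of $P$ and an orientation of $P'$. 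By Lemma \ref{grass_lemma} an oriented positive $2$-plane $P'\subset N(X)_\IR$ is exactly a point of $\pd(X)\isom Gr^{po}_2(N(X)_\IR)$, the two components corresponding to the two orientations. Thus $g$ preserves $\pd^+(X)$ if and only if $g$ preserves the orientation of maximal positive subspaces of $\HT(X,\IR)$, which is the intrinsic definition of $O^+_{Hodge}(\HT(X,\IZ))$. Exhibiting one orientation-reversing Hodge isometry, e.g.\ the reflection in a primitive algebraic class of square $+2$ such as $(1,0,-1)$, shows the index is two.

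For the inclusion $O^+_{Hodge}\subseteq\mathrm{im}(Aut)$ I would write $O^+_{Hodge}$ in terms of geometrically realized generators: shifts act as $-\id$; automorphisms of $X$ and twists $-\otimes L$ act orientation-preservingly (the latter via the unipotent $\exp(c_1(L))$, lying in the identity component of $O(N(X)_\IR)$); spherical twists along $(-2)$-classes give reflections; and Fourier--Mukai transforms to fine two-dimensional moduli spaces of stable sheaves realize the isometries that move the isotropic vector $v_0=(0,0,1)$. By the theorems of Mukai and Orlov every Hodge isometry is induced by some Fourier--Mukai kernel, and by \cite{HLOY2004}, \cite{PloogPHD} the elements above generate $O^+_{Hodge}(\HT(X,\IZ))$; as each is visibly orientation preserving, the image contains $O^+_{Hodge}$. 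The reverse inclusion $\mathrm{im}(Aut)\subseteq O^+_{Hodge}$ — that \emph{every} autoequivalence preserves orientation — is the deep point and the main obstacle, established in \cite[Cor.\ 4.10]{HuybrechtsMacriStellariOrientation2009}. The strategy I would follow is to pass to the Fourier--Mukai kernel $\ke\in\cd^b(X\times X)$ and deform it in a family of derived equivalences over a connected base of (possibly twisted) K3 surfaces: the induced isometry of the local system of Mukai lattices varies continuously, orientation of positive $4$-spaces is locally constant, and one reaches a fibre where the equivalence is one of the manifestly orientation-preserving standard equivalences above. The difficulty lies entirely in arranging such a deformation and controlling the cohomological action throughout, which is why the two inclusions cannot be settled by the same elementary bookkeeping.

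Finally, for the second assertion I would reduce to the first. Choose one Fourier--Mukai equivalence $\Phi_0\colon\cd^b(X)\sra\cd^b(Y)$ whose action can be checked by hand to send $\pd^+(X)$ to $\pd^+(Y)$, for instance the transform attached to a geometric moduli space of stable sheaves, where the positive algebraic directions are transported explicitly. For an arbitrary equivalence $\Phi$, the composite $\Phi_0^{-1}\circ\Phi$ lies in $Aut(\cd^b(X))$ and hence preserves $\pd^+(X)$ by the first part, so $\Phi^H(\pd^+(X))=\Phi_0^H(\pd^+(X))=\pd^+(Y)$. Equivalently, one checks directly that the orientation of positive $4$-spaces, defined intrinsically from the period and an ample direction, is preserved by any Fourier--Mukai transform; this is the two-surface form of the orientation theorem.
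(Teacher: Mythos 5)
The paper gives no proof of this theorem at all---it is imported as a known result from \cite{HLOY2004}, \cite{PloogPHD} and \cite[Cor.~4.10]{HuybrechtsMacriStellariOrientation2009}---so your proposal, which wraps those same citations in the standard reductions (identifying preservation of $\pd^+(X)$ with preservation of the orientation of positive four-planes in $\HT(X,\IR)$, realizing $O^+_{Hodge}(\HT(X,\IZ))$ by geometrically induced generators, and composing with a fixed moduli-space equivalence to deduce the two-surface statement), amounts to essentially the same treatment. Your surrounding reductions are correct, and you rightly identify that the one genuinely deep step, the inclusion of the image of $Aut(\cd^b(X))$ into $O^+_{Hodge}(\HT(X,\IZ))$, is exactly the content of the cited corollary of Huybrechts--Macr\`i--Stellari rather than something obtainable by the same elementary bookkeeping.
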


This theorem allows us to make the following definition.
\begin{defn}
  Let $\pd^+(\kt)$ be the connected component of the period domain $\pd(\kt)=\pd(N(\kt))$
  which is mapped to $\pd^+(X)$ under every derived equivalence $\kt \isom \cd^b(X)$.

  We define the {\em K\"ahler moduli space } of $\kt$ to be
  \[ KM(\kt) = \Gamma_{\kt} \setminus \pd^+(\kt) \]
  where $\Gamma_{\kt}$ is the image of $Aut(\kt)$ in $O(N(\kt))$.
\end{defn}

\begin{rem}
  Let us introduce the notation $KM(X)$ for $KM(\cd^b(X))$.
  Theorem \ref{Or} shows, that we have a canonical isomorphism
  \[ KM(X) \isom \Gamma_X^+ \setminus \pd^+(X), \]
  where $\Gamma_X^+ \subset O(N(X))$ is the image of $O^+_{Hodge}(\HT(X,\IZ))$ in $O(N(X))$.
  Ma works in the setting $\kt=\kd^b(X)$ and uses $\Gamma_X^+ \setminus \pd^+(X)$
  as definition for the K\"ahler moduli space.
\end{rem}

\begin{rem}\label{KM_stab_constr}
  There is another construction of the K\"ahler moduli space using the theory of 
  Bridgeland stability conditions which is proved in the appendix 
  cf.\ Corollary \ref{double_quot}:
  \[
  KM_0(X) \isom Aut^\dagger(\cd^b(X)) \setminus Stab^\dagger(X) / \GLT .
  \]
  Here $KM_0(X) = \Gamma^+_X \setminus \pd^+_0(X)  \subset KM(X)$ is the complement of
  a divisor and $Aut^\dagger(\cd^b(X))$ is the group of auto-equivalences respecting 
  the distinguished component $Stab^\dagger(X)$ of the stability manifold. 
  This was also stated in \cite{Ma1} without proof.

  Note that, this description is not intrinsic to the derived category
  as the component $Stab^\dagger(X) \subset Stab(\cd^b(X))$ may a
  priori depend on $X$.  But in fact, no other component of the
  stability manifold is known.
\end{rem}

\begin{ex}
  If $X$ has Picard rank $\rho(X)=1$ and the ample generator $H \in NS(X)$ has square $H.H=2n$,
  then the K\"ahler moduli space is isomorphic to a Fricke modular 
  curve $KM(X) \isom  \Gamma_0^+(n)  \setminus \IH$. 
  See \cite[Sec.\ 5]{Ma2}, \cite[Thm.\ 7.1]{Dolgachev1996}.
\end{ex}

The subgroup $\Gamma_\kt \subset O(N(\kt))$ is of finite index since it contains
\[ O_0^+(N(\kt)) = O^+(N(\kt)) \cap Ker(O(N(\kt)) \ra Aut(A(N(\kt))))  \] 
where $A(N(\kt))=N(\kt)\dual / N(\kt)$ is the discriminant group, cf.\ \cite[Def.\ 3.1.]{Ma1}.
Hence we can apply a general construction of Baily and Borel to compactify the
K\"ahler moduli space.
\begin{thm}[Baily--Borel]  \cite{BailyBorel}
  There is a natural compactification $KM(\kt)\subset \overline{KM}(\kt)$
  which is a normal, projective variety over $\IC$.

  The boundary $\del KM(\kt)= \overline{KM}(\kt) \setminus KM(\kt)$ consists of 
  zero- and one-dimensional components called {\em cusps}, which are in bijection 
  to $\Gamma_\kt \setminus \kb_i $, where
  \[ \kb_i = \Set{ I \subset N(\kt) }{I \text{ primitive, isotropic, } rk(I)=i+1} \]
  for $i=0,1$ respectively.
\end{thm}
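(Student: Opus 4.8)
The plan is to recognize the statement as an instance of the general Baily--Borel theory for arithmetic quotients of bounded symmetric domains, and then to translate the abstract boundary structure into the lattice-theoretic language of isotropic subspaces. The first thing I would do is identify $\pd^+(\kt)$ as a Hermitian symmetric domain. Since $N=N(\kt)$ has signature $(2,\rho)$, the period domain $\pd(\kt)$ is a domain of Type IV; concretely, via the isomorphism $\pd(\kt)\isom O(2,\rho)/(SO(2)\times O(\rho))$ already noted above, its connected component $\pd^+(\kt)$ is a bounded symmetric domain on which $O^+(N)$, and hence the finite-index subgroup $\Gamma_\kt$, acts properly discontinuously by holomorphic automorphisms.

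The second step is to verify the arithmeticity hypothesis of Baily--Borel. This is immediate from the discussion preceding the statement: $\Gamma_\kt$ contains $O_0^+(N(\kt))$, the kernel of the action on the discriminant group $A(N(\kt))$, which is of finite index in $O^+(N(\kt))$; hence $\Gamma_\kt$ is arithmetic. The Baily--Borel theorem then directly furnishes $KM(\kt)=\Gamma_\kt\setminus\pd^+(\kt)$ with the structure of a quasi-projective variety, together with a canonical minimal compactification $\KM(\kt)$ that is normal and projective over $\IC$. Normality and projectivity therefore require no argument beyond the citation.

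The remaining and genuinely content-bearing step is to describe the boundary $\del KM(\kt)$. By the general theory the boundary components of the compactification are the $\Gamma_\kt$-orbits of rational boundary components of $\pd^+(\kt)$, and for a Type IV domain these are classified by the nonzero totally isotropic subspaces of the rational quadratic space $N_\IQ$. Because $N$ has signature $(2,\rho)$, the Witt index is at most $2$, so only isotropic subspaces of dimensions $1$ and $2$ occur; unwinding the explicit description of the associated boundary component then shows that a dimension-one isotropic subspace contributes a $0$-dimensional component (a point) while a dimension-two isotropic subspace contributes a $1$-dimensional component, a quotient of $\IH$. To pass from rational subspaces to the lattice data $\kb_i$, I would use that each rational isotropic subspace of dimension $i+1$ contains a unique primitive isotropic sublattice $I\subset N$ of rank $i+1$, namely its saturation $I_\IQ\cap N$; this gives an $O^+(N)$-equivariant bijection between rational isotropic subspaces of dimension $i+1$ and elements of $\kb_i$. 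Passing to $\Gamma_\kt$-orbits yields the asserted bijection between the $i$-dimensional cusps and $\Gamma_\kt\setminus\kb_i$.

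The main obstacle, beyond correctly invoking the deep Baily--Borel theorem, lies in the bookkeeping of this last step: one must match the dimension of each rational boundary component with the rank of the corresponding isotropic subspace, and check compatibility both with the $\Gamma_\kt$-action and with passing to primitive saturations. I expect the dimension/rank correspondence to be the most delicate point, since it requires the explicit structure of boundary components of Type IV domains rather than a black-box appeal to the compactification theorem.
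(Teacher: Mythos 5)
Your proposal is correct, and it matches the paper's treatment: the paper states this result purely as a citation to Baily--Borel (with the arithmeticity of $\Gamma_\kt$ noted just beforehand via the finite-index subgroup $O_0^+(N(\kt))$), offering no proof of its own, so your argument --- invoke the general theory for the Type IV domain, verify arithmeticity, and translate rational isotropic subspaces into primitive isotropic sublattices via saturation --- is exactly the intended unwinding of that citation. The lattice-theoretic bookkeeping you supply (Witt index at most $2$, dimension of the boundary component matching the rank of the isotropic sublattice, $O^+(N)$-equivariance of the saturation bijection) is sound and fills in the details the paper leaves implicit.
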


\begin{defn}\label{def_standard_vector}
  The set of zero-dimensional cusps is divided further with respect to divisibility. 
  For $I \in \kb_0$ we define 
  \[ div(I)=g.c.d(\Set{ v.w }{ v \in I, w \in N(\kt)}) \] 
  and set $\kb_0^d=\Set{ I \in \kb_0  }{ div(I)=d }$.
  Cusps corresponding to elements of $\kb_0^1$ are called {\em standard cusps}.   
  
  We call $v \in N$ a {\em standard vector}\footnote{This definition is not standard.} 
  if $v.v=0$ and $div(v):=div(\IZ v)=1$.   
\end{defn}  

\begin{rem}\label{std_vectors}
  The group $\Gamma_\kt$ contains the element $-id_{N(\kt)}=[1]^H$ which interchanges the 
  generators of any $I \in \kb_0^1$.
  Therefore, the map $v \mapsto \IZ v$ induces a bijection
  \[ \Gamma_\kt \setminus \Set{ v \in N(\kt) }{ v \text{ standard } } 
  \isom \{\text{ standard cusps of $\KM(\kt)$}\ \}. \]  
  We will refer to standard cusps as equivalence classes $[v]=\Gamma_\kt \cdot v$ 
  via this bijection.
\end{rem}

\subsection{Ma's theorem}\label{subsec:Ma}

\begin{defn}
  The K\"ahler moduli space of $\kd^b(X)$ comes with a distinguished standard cusp 
  $[v_0] \in \KM(X), v_0=(0,0,1) \in N(X)$, which is called  {\em large volume limit}.
\end{defn}

We are now ready to state Ma's theorem.
\begin{thm}[Ma] \cite{Ma1}, \cite{Ma2} \label{Ma_Thm}
  There is a canonical bijection
  \[ \Set{ X \text{ K3 surface }}{ \kt \isom \cd^b(X) }_{/ \isom}
  \longleftrightarrow 
  \{ \text{ standard cusps of } \KM(\kt) \}. \]
  The cusp of $\KM(\kt)$ associated to  $Y$ corresponds to the large volume limit of $X$ 
  under the isomorphism $\KM(\kt) \isom \KM(X)$ induced by any equivalence $\kt \isom \cd^b(X)$.

  We denote the K3 surface associated to a cusp $[v]$ by $X(v)$.
\end{thm}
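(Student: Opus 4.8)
The plan is to write the bijection down explicitly and to deduce its properties from Orlov's derived global Torelli theorem together with Mukai's construction of moduli spaces of stable sheaves. Fix once and for all an equivalence $\kt \isom \cd^b(X)$, so that $N(\kt) \isom N(X)$ and, by Remark \ref{std_vectors}, standard cusps are identified with $\Gamma_\kt$-orbits of standard vectors. To a K3 surface $Y$ together with a chosen equivalence $\Psi \colon \cd^b(Y) \sra \kt$ I assign the vector $\Psi^H(v_0^Y)$, where $v_0^Y = (0,0,1) = v(\ko_y) \in N(Y)$ is the Mukai vector of a skyscraper sheaf. This vector is isotropic and, since $\Psi^H$ is an isometry and $v_0^Y.(1,0,0) = -1$, it has divisibility one; hence it is a standard vector. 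First I would verify that its class $[\Psi^H(v_0^Y)]$ is independent of all choices: replacing $\Psi$ by $\Psi \circ \Xi$ with $\Xi \in Aut(\cd^b(Y))$ alters the vector by $(\Psi \Xi \Psi^{-1})^H \in \Gamma_\kt$, and an isomorphism $Y \isom Y'$ carries $v_0^Y$ to $v_0^{Y'}$ because the pushforward of a skyscraper is a skyscraper. The normalization $Y = X$, $\Psi = \id$ sends $X$ to $[v_0]$, which is the compatibility with the large volume limit asserted in the theorem.

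For surjectivity I would use the geometric input. Given a standard vector $v \in N(X)$, Mukai's theory --- equivalently Theorem \ref{reconstruction_thm} for a $v$-general stability condition --- produces a fine moduli space $Y = \km^\sigma(v)$ that is a K3 surface, carrying a universal family $\ke$ on $X \times Y$ whose Fourier--Mukai transform $\Psi = \Phi_{\ke} \colon \cd^b(Y) \sra \cd^b(X) = \kt$ is an equivalence with $\Psi(\ko_y) = \ke_y$. Hence $\Psi^H(v_0^Y) = v(\ke_y) = v$, so $Y$ maps to $[v]$ and every standard cusp lies in the image.

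The main content is injectivity. Suppose $Y$ and $Y'$ map to the same cusp. Since $\Gamma_\kt$ is the image of $Aut(\kt)$, I may absorb the connecting element of $\Gamma_\kt$ into $\Psi'$ and assume $\Psi^H(v_0^Y) = \Psi'^H(v_0^{Y'})$; then $F = \Psi^{-1} \circ \Psi' \colon \cd^b(Y') \sra \cd^b(Y)$ is a derived equivalence with $F^H(v_0^{Y'}) = v_0^Y$. Because $F^H$ fixes these primitive isotropic vectors, it restricts to $(v_0^{Y'})^\perp \to (v_0^Y)^\perp$ and descends to the subquotients; the elementary identification $v_0^\perp = H^2 \oplus H^4$, $v_0^\perp / \IZ v_0 \isom H^2$ (the $v = v_0$ case of part (2) of Theorem \ref{reconstruction_thm}) then exhibits the descended map as a Hodge isometry $H^2(Y',\IZ) \to H^2(Y,\IZ)$. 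I would like to conclude $Y \isom Y'$ from the global Torelli theorem for K3 surfaces.

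The genuine obstacle is that Torelli applies only to an \emph{effective} Hodge isometry --- one that is orientation-preserving and maps an ample class to an ample class --- whereas the descent of $F^H$ is a priori arbitrary. The remedy is to pre-compose $F$ with auto-equivalences of $\cd^b(Y)$ that fix $v_0^Y$, such as shifts, twists by line bundles and the spherical twists along $\ko_C(k)$ for $(-2)$-curves $C$; on $H^2(Y)$ these induce, together with $Aut(Y)$, the reflections in $(-2)$-classes and hence the full Weyl group, allowing one to move any Hodge isometry into an effective one. What makes this work is Theorem \ref{Or}: it identifies the image of $Aut$ with the orientation-preserving Hodge isometry group, so that the stabilizer of $v_0^Y$ inside $\mathrm{im}(Aut)$ surjects onto enough of $O_{Hodge}(H^2(Y))$ to achieve effectivity. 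Global Torelli then produces an isomorphism $Y' \isom Y$. I expect this final step --- pinning down the stabilizer of $v_0$ inside the realized Hodge isometries and matching it against the Weyl group and the ample cone --- to be the hard part, and it is precisely here that the argument becomes lattice-theoretic rather than geometric, as in Ma's original proof \cite{Ma1}.
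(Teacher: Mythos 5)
Your construction is, at bottom, the same as the paper's: the map $Y \mapsto [\Psi^H(v_0^Y)]$ is exactly Ma's assignment (an equivalence $\Psi$ determines an embedding of the hyperbolic plane $U \isom H^0(Y) \vsum H^4(Y)$ into $N(\kt)$, and your vector is the image of the basis vector $f$), and your well-definedness and normalization checks match the paper's. Where you diverge is in how bijectivity is established: the paper quotes Orlov's derived global Torelli theorem to identify Fourier--Mukai partners with $Emb(U,N(X))/\Gamma_X$ and then uses Ma's purely lattice-theoretic observation that $\vphi \mapsto \vphi(f)$ induces a bijection onto standard vectors modulo $\Gamma_X^+$; you instead prove surjectivity directly from Mukai's moduli theory (this is legitimate and non-circular, since Proposition \ref{reduction_to_lvl}, on which your citation of Theorem \ref{reconstruction_thm} ultimately rests, makes no use of Theorem \ref{Ma_Thm}) and injectivity by descending $F^H$ to a Hodge isometry $H^2(Y',\IZ) \ra H^2(Y,\IZ)$. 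The injectivity step is in fact the same argument the paper uses to show independence of the chosen equivalence $\kt \isom \cd^b(X)$, so your proof is a reasonable, somewhat more geometric rearrangement of the same ingredients.

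The problem is your last paragraph. The ``genuine obstacle'' you describe does not exist: the global Torelli theorem in its weak form states that two K3 surfaces are isomorphic as soon as there exists \emph{some} Hodge isometry $H^2(Y',\IZ) \isom H^2(Y,\IZ)$; effectivity is needed only if one wants the isomorphism to induce the given isometry, which the bijection does not require. This weak form is precisely what the paper invokes (``the global Torelli theorem shows that the K3 surfaces $X(v_1)$ and $X(v_2)$ are isomorphic''), and it closes your injectivity step immediately: once $F^H$ descends to a Hodge isometry of second cohomologies, you are done. If one insists on deducing the weak form from the strong (effective) form, the correction by $\pm \id$ and by reflections in $(-2)$-classes takes place entirely inside the lattice $H^2(Y,\IZ)$; there is no need to realize those reflections by auto-equivalences of $\cd^b(Y)$ fixing $v_0$, so the stabilizer analysis via Theorem \ref{Or} that you flag as ``the hard part'' is superfluous, and the lattice-theoretic difficulty you anticipate (which in Ma's original argument sits in the comparison of $Emb(U,N(X))/\Gamma_X$ with standard vectors modulo $\Gamma_X^+$) never enters your route. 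As written, your proof stops short of completing injectivity; with the correct form of Torelli it is complete.
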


\begin{proof}
  We sketch Ma's original proof for the case $\kt = \kd^b(X)$  
  and then generalize to our situation.

  Every derived equivalence $\Phi:\cd^b(Y) \ra \cd^b(X)$ induces an isometry
  $\Phi^H: N(Y) \ra N(X)$, and therefore an embedding of the hyperbolic plane
  \[   U \isom H^0(Y) \vsum H^4(Y) \subset N(Y) \xlra{\Phi^H} N(X). \]
  It follows from Orlov's derived global Torelli theorem 
  \cite[Prop.\ 10.10]{HuybrechtsFM} that this construction induces a bijection
  \begin{align*} 
    \Set{ Y \text{ K3 surface}}{\cd^b(X) \isom \cd^b(Y)}_{/ \isom} \lra  Emb(U,N(X))/  \Gamma_X,
  \end{align*}
  where $Emb(U,N(X))$ is the set of all embeddings of the hyperbolic plane $U$ into $N(X)$,
  and $\Gamma_X \subset O(N(X))$ is the image of $O_{Hodge}(\HT(X,\IZ))$ in $O(N(X))$.
  The key insight of Ma is that the map $\vphi \mapsto \vphi(f)$, where $e,f \in U$ is the standard 
  basis, induces a bijection
  \[ 
   Emb(U,N(X)) / \Gamma_X  \lra \Set{ v \in N(X) }{ v \text{ standard } }/ \Gamma_X^+.  
  \]
  Combining with Remark \ref{std_vectors} one gets a bijection
  \[  \Set{ Y \text{ K3 surface}}{\cd^b(X) \isom \cd^b(Y)}_{/ \isom} 
  \lra \{ \text{ standard cusps of $\KM(X)$ }\} \]
  which maps $X$ maps to $[v_0]$.
  Note that, the Hodge structure $H^2(Y,\IZ)$ of a K3 surface $Y$ 
  can be reconstructed from the associated cusp $[v]$
  as the subquotient $v^\perp / v$ of $\HT(X,\IZ)$.

  To generalize to arbitrary $\kt$ we choose an equivalence $\kt \isom
  \cd^b(X)$ and claim that the above bijection is independent of this choice.
  Indeed, if we are given another equivalence $\kt \isom \cd^b(Y)$, then the composition
  $\Phi: \cd^b(X) \isom \kt \isom \cd^b(Y)$ induces a Hodge isometry $\Phi^H:\HT(X,\IZ) \ra \HT(Y,\IZ)$.
  If a standard vector $v \in N(\kt)$ corresponds to $v_1 \in N(X)$ and $v_2 \in N(Y)$ 
  then $\Phi^H$ induces an isomorphism of Hodge structures 
  \[ H^2(X(v_1),\IZ) \isom v_1^\perp/v_1 \lra v_2^\perp / v_2 \isom H^2(X(v_2),\IZ). \]
  Now, the global Torelli theorem shows that the K3 surfaces $X(v_1)$ and $X(v_2)$ are isomorphic. 
\end{proof}

\begin{rem}\label{fm_criterion}
  Let $v \in N(X)$ be a standard vector defining a standard cusp of $\KM(X)$.
  The  Fourier--Mukai partner $Y=X(v)$ associated to this cusp via Ma's theorem
  is determined up to isomorphism, by the property that
  \[ H^2(X(v),\IZ) \isom v^\perp / v \]
  as subquotient of $\HT(X,\IZ)$.
  
  We cannot formulate an analogues statement for the cusps of $\KM(\kt)$ since there is 
  no construction of the Hodge structure $\HT(X,\IZ)$ known, which is intrinsic to 
  the category $\kd^b(X)$.
\end{rem}

\begin{rem}
  Let $X$ be a K3 surface, $[v] \in \overline{KM}(X)$ a standard cusp and let $Y=X(v)$
  be the associated Fourier--Mukai partner.  We have seen that every derived equivalence 
   $\Phi: \cd^b(X) \sra \cd^b(Y)$ maps $[v]$ to the 
  large volume limit $[v_0] \in \overline{KM}(Y)$.

  In appendix \ref{sec:reduction_to_lvl} we will strengthen this result in two directions.
  Firstly, we will construct a $\Phi$, with the property that $\Phi^H$ maps $v$ to $v_0$ 
  and not only the orbit $[v]$ to $[v_0]$.
  Secondly, the equivalence $\Phi$ respects the distinguished component of the stability manifold 
  (cf.\ Subsection \ref{sec:constr-stab-cond}).
\end{rem}

%%% Local Variables: 
%%% mode: latex
%%% TeX-master: "Cusps"
%%% End: 

\section{Stability conditions}\label{sec:Stability}
Our next goal is to relate the K\"ahler moduli space to the stability manifold.
In this section we recall from \cite{BridgelandStability} and
\cite{BridgelandK3} the basic theory of Bridgeland stability conditions in the
special case of a K3 surface. On the way we introduce the notation and establish
some geometric results that will be used in sequel.  The link to the
K\"ahler moduli space will be made in section \ref{sec:Degenerations} and
Corollary \ref{double_quot} in the appendix.

\subsection{Definition of stability conditions}

\renewcommand{\mho}{z} % use symbol z for the central charge.
Let $X$ be a K3 surface.  Recall from \cite[Def.\ 5.7, Def.\ 2.3.,
Prop.\ 5.3]{BridgelandStability}, that a {\em stability condition}
$\sigma$ on $\cd^b(X)$ consists of
\begin{enumerate}
\item a heart $\ka$ of a bounded t-structure on $\cd^b(X)$ and
\item a vector $\mho \in N(X)_{\IC}$ called {\em central charge}
\end{enumerate}
with the property that $Z\colon K(\ka) \ra \IC, A \mapsto v(A).\mho$
satisfies
\[ Z(A) \in \IH \union \IR_{<0} \text{\; for all \;} A \in \ka, \, A
\neq 0. \] We require moreover local-finiteness and the existence of
Harder--Narasimhan filtrations.

In the usual definition, the datum of the heart is replaced by a
collection of subcategories $\kp(\phi) \subset \ka, \phi \in \IR,$
called \textit{slicing}. The equivalence to the above definition was shown
in \cite[Prop.\ 5.3]{BridgelandStability}.

The main result about the stability manifold of a K3 surface is the following theorem.

\begin{thm}\cite[Cor.\ 1.3]{BridgelandStability}, \cite[Thm.\
  1.1]{BridgelandK3} \label{stability_theorem} 
  The set of all stability conditions on a K3 surface $X$ has 
  the structure of a (finite-dimensional) complex manifold $Stab(X)$.

  There is a distinguished connected component $Stab^\dagger(X)$ of
  $Stab(X)$ such that the map $\sigma=(\ka,\mho) \mapsto \mho$ induces 
  a Galois cover
  \[ \pi: Stab^\dagger(X) \lra \kp_0^+(X).  \] 
  Moreover, the Galois group is
  identified with the $Aut^{\dagger}_0(\cd^b(X)) \subset Aut(\cd^b(X))$, the
  group of auto-equivalences that respect the component $Stab^\dagger(X)$ and
  act trivially on the cohomology $\HT(X,\IZ)$.
\end{thm}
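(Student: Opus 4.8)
The statement collects the central results of \cite{BridgelandStability} and \cite{BridgelandK3}, so the plan is to reconstruct Bridgeland's argument while isolating where the K3 geometry enters; the proof splits into four essentially independent tasks. For the manifold structure I would invoke the general deformation principle: the forgetful map $\sigma=(\ka,Z)\mapsto Z$ into $\Hom(N(X),\IC)\isom N(X)_\IC$ (via the Mukai pairing) is a local homeomorphism onto an open subset. Concretely, one topologises $Stab(X)$ through a generalised metric on the associated slicings and shows that any central charge $W$ sufficiently close to $Z$ — close in the operator norm measured against the slicing of $\sigma$ — is realised by a unique nearby stability condition, obtained by perturbing the slicing within controlled angular bounds. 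Transporting the complex structure of $N(X)_\IC$ through this local homeomorphism equips $Stab(X)$ with the structure of a finite-dimensional complex manifold; this step is formal and uses nothing specific to K3 surfaces.

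Next I would produce Bridgeland's explicit stability conditions $\sigma_X(\beta,\omega)$ for $\beta\in NS(X)_\IR$ and $\omega\in Amp(X)$ with $\omega^2>2$. The heart is the tilt $\langle \kf[1],\kq\rangle$ of $Coh(X)$ at the torsion pair cut out by the twisted $\mu_\omega$-slope, with torsion-free part $\kf$ and torsion part $\kq$, and the central charge is $Z(E)=\langle\exp(\beta+i\omega),v(E)\rangle$. The one substantial verification is the positivity axiom $Z(E)\in\IH\cup\IR_{<0}$ for every nonzero $E$ in the tilted heart; after reducing to torsion sheaves and to $\mu_\omega$-semistable torsion-free sheaves, this comes down to the inequality $\omega^2>2$ combined with the Hodge index theorem and the Bogomolov inequality, the point being that a spherical sheaf cannot take a forbidden central-charge value. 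Since this family is connected, I then \emph{define} $Stab^\dagger(X)$ to be the connected component of $Stab(X)$ that contains it.

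The image of $\pi$ restricted to $Stab^\dagger(X)$ then lands in $\kp_0^+(X)$: positivity together with the support property forces $\mathrm{Re}(Z)$ and $\mathrm{Im}(Z)$ to span a positive two-plane, so the image lies in $\kp(N(X))$, while a stable spherical object of class $\delta$ with $\delta^2=-2$ would violate the half-plane axiom if $Z(\delta)=0$, which removes exactly the root hyperplanes and accounts for the subscript $0$; the superscript $+$ records the component fixed above. \textbf{The main obstacle is proving that $\pi$ is genuinely a covering map and not merely a local homeomorphism.} Since $\pi$ is already a local homeomorphism, it suffices to establish path lifting, equivalently that stability conditions do not escape to the boundary over a compact path in $\kp_0^+(X)$. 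Bridgeland controls this through a detailed wall-and-chamber analysis of the open region $U(X)$ of \emph{geometric} stability conditions — those for which every skyscraper $\ko_x$ is stable of a common phase — showing that, upon crossing a boundary wall, one re-enters a geometric region after applying a spherical twist or a shift, and that the translates of $\overline{U(X)}$ under these autoequivalences tile $Stab^\dagger(X)=\pi^{-1}(\kp_0^+(X))$. Assembling these charts into a covering, using the local finiteness of the walls (Lemma \ref{L_crit} and \cite[Lem.\ 11.1]{BridgelandK3}) and the boundedness of the semistable classes, is the technical heart of the argument.

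Finally, for the Galois group, any autoequivalence $\Phi$ acts on $Stab(X)$ by $\Phi_*$ with $\pi(\Phi_*\sigma)=\Phi^H(\pi(\sigma))$. Thus if $\Phi^H=\mathrm{id}$ on $N(X)$ and $\Phi$ preserves $Stab^\dagger(X)$, then $\Phi_*$ commutes with $\pi$ and is a deck transformation; these are precisely the elements of $Aut^\dagger_0(\cd^b(X))$. The action is free, since an autoequivalence fixing a stability condition and acting trivially on $N(X)$ must be the identity, and transitive on the fibres of $\pi$, because two stability conditions with the same central charge have the same slicing up to such an autoequivalence — this last point is exactly what the tiling above delivers, the fibre over any point of $\kp_0^+(X)$ being a single $Aut^\dagger_0(\cd^b(X))$-orbit. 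Hence $\pi$ is a regular cover with Galois group $Aut^\dagger_0(\cd^b(X))$, as asserted.
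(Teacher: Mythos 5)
The paper never proves this theorem: it is imported verbatim from Bridgeland, with the citations \cite[Cor.\ 1.3]{BridgelandStability} and \cite[Thm.\ 1.1]{BridgelandK3} standing in place of a proof, so the only meaningful comparison is with Bridgeland's own argument. Your outline reconstructs that argument's architecture faithfully: the deformation theorem giving the manifold structure, the tilted heart and central charge $\langle \exp(\beta+i\omega), v(E)\rangle$ with positivity resting on $\omega^2>2$ and the $v^2\geq -2$ bound, the definition of $Stab^\dagger(X)$ as the component containing these examples, the characterization of the geometric chamber $U(X)$ by stability of skyscrapers, and the covering property together with the deck-group identification obtained from the tiling of $Stab^\dagger(X)$ by translates $\Phi_*\overline{U}(X)$ --- the tiling the paper itself records as Lemma \ref{cover_stab}. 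You also correctly identify the covering property as the technical heart.

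One step, however, would fail as written, and it matters because it hides where the restriction to the distinguished component enters. You assert that the containment $\pi(Stab^\dagger(X))\subset \kp_0^+(X)$ is essentially formal: positivity plus the support property forcing $Re(Z)$ and $Im(Z)$ to span a positive-definite two-plane, and stable spherical objects excluding the root hyperplanes $D(\delta)$. Neither half follows from the axioms. Bridgeland's definition (and the paper's, in Section \ref{sec:Stability}) demands only local finiteness, not the support property; and even granting the support property, negative definiteness of some auxiliary quadratic form on $\ker Z$ says nothing about the Mukai form there, so the plane spanned by $Re(Z)$ and $Im(Z)$ need not be positive. Likewise, to exclude $D(\delta)$ you must first produce, for the given $\sigma$, a $\sigma$-semistable object whose class forces $Z(\delta)\neq 0$; the axioms alone supply no such object. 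If these implications were formal they would apply to every connected component of $Stab(X)$, and whether that is true is open --- which is exactly why the theorem is stated only for $Stab^\dagger(X)$. In Bridgeland's proof the containment is not an input but an output of the same geometric analysis (the classification of $U(X)$, its boundary as in Theorem \ref{U_boundary}, and the tiling) that you already invoke for the covering property; rerouting your containment claim through that analysis repairs the outline.
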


\subsection{Group actions}

Given a derived equivalence $\Phi: \cd^b(X) \sra \cd^b(Y)$ and a stability
condition $(\ka,z)$ on $\cd^b(X)$ we get an induced stability condition 
$\Phi_* (\ka,\mho)=(\Phi(\ka), \Phi^H(\mho))$ on $\cd^b(Y)$.
In this way we obtain a left action of the group $Aut(\cd^b(X))$ on $Stab(X)$.

There is also a right action of the group 
\[ \GLT = \Set{(T,f)}{T \in \GL,\, f:\IR \ra \IR, f(\phi + 1)=f(\phi)+1 \stext{with} (\#) }\]
on $Stab(X)$  (cf. \cite[Lem.\ 8.2.]{BridgelandStability}). Here $(\#)$ stands
for the condition
$\IR_{>0} \, T \cdot exp(i \pi \phi) = \IR_{>0} \, exp(i \pi f(\phi))$.

\begin{ex}\label{shift_example}
  For $\lambda \in \IR$ set 
  $ \Sigma_\lambda=(\exp(i \pi \lambda),\phi \mapsto \phi + \lambda) \in \GLT $.
  Then the action of the shift $[1]$ equals the action of the 
  $\Sigma_1$ on the stability manifold.
\end{ex}

\subsection{Construction of stability conditions}\label{sec:constr-stab-cond}
Explicit examples of stability conditions on a K3 surface are constructed as follows.

Fix classes $\beta \in NS(X)_\IR$ and $\omega \in Amp(X)$ and define a central charge 
\[ Exp(\beta + i \omega)=(1,\beta + i \omega,\half (\beta + i \omega)^2 ) \in N(X)_\IC. \]
For a torsion free sheaf $A$ of positive rank denote by
\begin{align*}
 \mu_\omega^{min}(A) &= \mathrm{inf}\, \{ \mu_\omega(Q) \,|\, A \surj
 Q,\, Q \text{ torsion free} \}  \\
 \mu_\omega^{max}(A) &= \mathrm{sup}\, \{ \mu_\omega(S) \,|\, S \inj  A \}
\end{align*}
the extremal slopes. 
Define full subcategories of $Coh(X)$ by
\begin{align*}
 \kt &= \{ A  \in Coh(X) \,|\, A \, \text{torsion or} \, \mu_\omega^{min}(A/A_{tors}) > \beta.\omega \} \\
 \kf &= \{ A \in Coh(X) \,|\, A \, \text{torsion free and } \, \mu_\omega^{max}(A) \leq \beta.\omega \}.
\end{align*}
The following full subcategory of $\cd^b(X)$ is a heart of a bounded t-structure.
\[  \ka(\beta,\omega) = 
\{ E \in \cd^b(X) \,|\, H^0(E) \in \kt,\, H^{-1}(E) \in \kf,\, H^{i}(E)=0 \;\text{if}\; i \neq 0,-1 \}. 
\]

\begin{thm}\label{Existence} \cite[Lem.\ 6.2, Prop.\ 11.2]{BridgelandK3}
  The pair 
  \[ \sigma(\beta,\omega)=(\ka(\beta,\omega),z=Exp(\beta + i \omega)) \]
  is a stability condition on $\cd^b(X)$ if $\theta(\mho)=exp(\beta + i \omega) \in \kl(X) \subset
  \pd(X)$.

  The set of all stability conditions arising in this way is denoted by $V(X)$.
\end{thm}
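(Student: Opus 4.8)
The plan is to verify the three defining properties of a stability condition for the pair $\sigma(\beta,\omega)$: that $\ka(\beta,\omega)$ is the heart of a bounded t-structure, that $Z(E)=v(E).z$ is a stability function on it (positivity: $Z(E)\in\IH\cup\IR_{<0}$ for every $0\neq E\in\ka(\beta,\omega)$), and that $Z$ admits Harder--Narasimhan filtrations and is locally finite. The first property is immediate: $(\kt,\kf)$ is a torsion pair on $Coh(X)$ and $\ka(\beta,\omega)$ is its tilt, so it is a heart by the Happel--Reiten--Smal\o{} theorem, which is exactly the sentence preceding the statement. Since the Mukai vector identifies $N(Coh(X))$ with $N(X)$, the linear form $Z$ descends to $N(X)$ and is a well-defined numerical central charge. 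The bulk of the work is therefore positivity, and then the existence of HN filtrations.

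For positivity I would argue on the two pieces of the tilt. Any $0\neq E\in\ka(\beta,\omega)$ fits into a triangle with $H^{-1}(E)[1]$, $H^{-1}(E)\in\kf$, and $H^0(E)\in\kt$, so that $v(E)=v(H^0(E))-v(H^{-1}(E))$. A direct computation gives $\mathrm{Im}\,Z(A)=\omega.(c_1(A)-r(A)\beta)$ for a sheaf $A$; since $\omega$ is ample, the slope conditions defining $\kt$ force $\mathrm{Im}\,Z(A)\geq 0$ for $A\in\kt$ (strictly, unless $A$ is a zero-dimensional torsion sheaf), while $\mu^{max}_\omega\leq\beta.\omega$ gives $\mathrm{Im}\,Z(B)\leq 0$ for $B\in\kf$. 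Hence $\mathrm{Im}\,Z(E)\geq 0$, and it remains to treat the boundary locus $\mathrm{Im}\,Z(E)=0$.

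When $\mathrm{Im}\,Z(E)=0$ both contributions vanish separately, so $H^0(E)$ is a zero-dimensional torsion sheaf with $\mathrm{Re}\,Z(H^0(E))=-\mathrm{length}\leq 0$, while $H^{-1}(E)\in\kf$ satisfies $\mu_\omega=\beta.\omega=\mu^{max}_\omega$ and is thus $\mu_\omega$-semistable of slope $\beta.\omega$. Passing to its $\mu_\omega$-stable Jordan--H\"older factors $S$, each simple, so that $v(S)^2\geq -2$, completion of the square produces the key identity
\[ 2\,r(S)\,\mathrm{Re}\,Z(S) \;=\; r(S)^2\,\omega^2 \;-\; \xi^2 \;+\; v(S)^2, \qquad \xi=c_1(S)-r(S)\beta. \]
The slope condition is $\xi.\omega=0$, so the Hodge index theorem on $NS(X)$ (signature $(1,\rho(X)-1)$) gives $\xi^2\leq 0$. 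If $v(S)^2\geq 0$ the right-hand side is positive simply because $\omega^2>0$; if $v(S)^2=-2$ then $S$ is spherical, $\delta:=v(S)\in\NDel(N(X),v_0)$ since $r(S)>0$, and the hypothesis $exp(\beta+i\omega)\in\kl(X)$ is exactly the condition $Exp(\beta+i\omega).\delta\notin\IR_{\leq 0}$, which together with $\mathrm{Im}\,Z(S)=0$ forces $\mathrm{Re}\,Z(S)>0$. Either way $\mathrm{Re}\,Z(H^{-1}(E))>0$, whence $\mathrm{Re}\,Z(E)=\mathrm{Re}\,Z(H^0(E))-\mathrm{Re}\,Z(H^{-1}(E))<0$ for $E\neq 0$. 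This is the step where the precise shape of $\kl(X)$, namely the avoidance of the spherical walls $W_A(\delta,v_0)$, is indispensable; for $\omega^2>2$ the inequality holds automatically, but in general it is genuinely the extra input.

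Finally, for the Harder--Narasimhan property and local-finiteness I would invoke Bridgeland's general criterion, reducing to the ascending/descending chain conditions on phases. These follow once one knows that $\ka(\beta,\omega)$ is Noetherian and that the image of $Z$ on nonzero objects is sufficiently discrete (the rank and $\omega.c_1$ take values in a fixed lattice, bounding the attainable phases away from any given one). I expect this to be the main obstacle: unlike the positivity argument it is not a finite calculation but a boundedness statement controlling infinite chains of sub- and quotient objects in the tilted heart. The identification of the resulting set $V(X)$ of all such $\sigma(\beta,\omega)$ with $\kl(X)$ then follows from the tube-domain model of $\pd(X)$ and the wall analysis of Section~\ref{sec:Lattice}.
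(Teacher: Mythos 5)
Your tilting and positivity arguments are correct, and they are essentially the proof of \cite[Lem.\ 6.2]{BridgelandK3}, which is the source the paper itself defers to rather than reproving: the decomposition of $E$ into $H^0(E)\in\kt$ and $H^{-1}(E)\in\kf$, the computation $\mathrm{Im}\,Z(A)=\omega.(c_1(A)-r(A)\beta)$, the reduction of the boundary case to $\mu_\omega$-stable Jordan--H\"older factors $S$, the identity $2r(S)\,\mathrm{Re}\,Z(S)=r(S)^2\omega^2-\xi^2+v(S)^2$ combined with the Hodge index theorem, and the use of the wall condition defining $\kl(X)$ precisely when $v(S)^2=-2$ all match Bridgeland's argument, including your correct observation that for $\omega^2>2$ the spherical case is automatic (this is Lemma \ref{L_crit}).

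The gap is in your last step. The theorem allows $\beta\in NS(X)_\IR$ and $\omega\in Amp(X)$ to be arbitrary \emph{real} classes, and your route to the Harder--Narasimhan property and local finiteness --- noetherianity of $\ka(\beta,\omega)$ plus discreteness of the image of $Z$ --- only works on the rational locus. For irrational $\beta$ or $\omega$ the values $\mathrm{Im}\,Z(A)=\omega.c_1(A)-r(A)\,\omega.\beta$ generate a subgroup of $\IR$ of rank $\geq 2$, hence dense, so the image of $Z$ is not discrete and the chain conditions on phases cannot be extracted this way; moreover, noetherianity of the tilted heart is itself only proved by Bridgeland for rational classes (\cite[Prop.\ 7.1]{BridgelandK3}). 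This is exactly why the paper cites \cite[Prop.\ 11.2]{BridgelandK3} in addition to Lemma 6.2: the passage from rational to real $(\beta,\omega)$ is not done by re-running the rational argument, but by deforming a stability condition from a nearby rational point inside $Stab^\dagger(X)$ --- using the deformation theorem of \cite{BridgelandStability} and the covering-space structure over $\kp_0^+(X)$ --- and then identifying the heart of the deformed stability condition with $\ka(\beta,\omega)$. Without an argument of this kind (or a genuinely new proof that $\ka(\beta,\omega)$ is noetherian and satisfies the chain conditions for irrational classes), your proof establishes the statement only for rational $\beta,\omega$, i.e.\ on a dense subset of $\kl(X)$ rather than all of it.
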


The connected component of $Stab(X)$ containing $V(X)$ is called {\em
  distinguished component} and denoted by $Stab^\dagger(X)$.

Let $\Phi: \cd^b(X) \ra \cd^b(Y)$ be a derived equivalence between two
K3 surfaces.  We say $\Phi$ {\em respects the distinguished component}
if $\Phi_* Stab^\dagger(X)=Stab^\dagger(Y).$

\begin{rem}\label{hearts}
  The heart $\ka(\beta, \omega \lambda)$ is independent of $\lambda > 0$. 
  Indeed, we have $\mu_{\lambda \omega}(A)=\lambda \mu_{\omega}$ and hence
  the conditions $\mu_{\omega}^{min}(A/A_{tors}) > \beta. \omega$ and 
  $\mu_\omega^{max}(A/A_{tors}) \leq \beta.\omega$ are invariant 
  under $\omega \mapsto \lambda \omega$.
  Therefore $\kt$ and $\kf$ do not depend on $\lambda$.
\end{rem}

\begin{rem}\label{U_definition}
  By \cite[Prop.\ 10.3]{BridgelandK3} the action of $\GLT$-action on $V(X)$ is free.
  We introduce the notation  $U(X):=V(X) \cdot \GLT \isom V(X) \times \GLT$ 
  for the image.
\end{rem}

The following proposition gives an important characterization of $U(X)$.
\begin{prop}\cite[Def.\ 10.2, Prop.\ 10.3]{BridgelandK3}\label{stab_criterion}
  Let $\sigma=(\ka,\mho)$ be a stability condition on $\cd^b(X)$.
  Then $\sigma \in U(X)$ if and only if the following properties hold.
  \begin{enumerate}
  \item All skyscraper sheaves $\ko_x$ are stable of the same phase.
  \item The vector $\mho$ lies in $\kp_0(X)$.
  \end{enumerate}
\end{prop}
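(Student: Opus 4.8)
The statement is an instance of Bridgeland's characterisation of \emph{geometric} stability conditions, and I would prove the two implications separately. The plan is: for $\sigma\in U(X)$ the two properties follow by invariance; for the converse, after a $\GLT$-normalisation the whole content is to show that $\sigma$ is geometric, i.e.\ that its heart is a tilt of $Coh(X)$ with central charge of exponential type, after which condition (2) locates $\sigma$ in the correct chamber.

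$(\Rightarrow)$ Every $\sigma\in U(X)=V(X)\cdot\GLT$ has the form $\sigma(\beta,\omega)\cdot g$ with $exp(\beta+i\omega)\in\kl(X)$ and $g\in\GLT$. For the model $\sigma(\beta,\omega)$ all skyscrapers $\ko_x$ have Mukai vector $v_0$ and central charge $v_0.Exp(\beta+i\omega)=-1$, and they are stable of phase $1$ by the construction underlying Theorem \ref{Existence}; this is (1). For (2), Theorem \ref{Existence} gives $\theta(\mho)=exp(\beta+i\omega)\in\kl(X)\subset\pd_0(X)$, so $\mho\in\kp_0(X)$. Both properties are preserved by the right $\GLT$-action: its $\GL$-part acts within the fibres of the bundle $\theta$ of Lemma \ref{grass_lemma}, hence fixes $\theta(\mho)$ and the condition $\theta(\mho)\in\pd_0(X)$, while the reparametrisation of phases preserves stability and the coincidence of phases. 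Thus (1) and (2) hold on all of $U(X)$.

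$(\Leftarrow)$ Assume (1) and (2). Since $Z(\ko_x)=v_0.\mho\neq0$ by stability and $Q(v_0)$ is a section of the $\GL$-bundle $\theta$ (Lemma \ref{tube_model}), I would first choose $g\in\GLT$ moving $\mho$ into $Q(v_0)$; replacing $\sigma$ by $\sigma\cdot g$ we may assume $\mho\in Q(v_0)$, so $\mho=Exp(\beta+i\omega)$ for some $\beta\in NS(X)_\IR$ and $\omega\in NS(X)_\IR$ with $\omega^2>0$, and the $\ko_x$ are stable of phase $1$ with $Z(\ko_x)=-1$. Condition (2) is untouched, so $exp(\beta+i\omega)=\theta(\mho)\in\pd_0(X)$, i.e.\ $exp(\beta+i\omega).\delta\neq0$ for every root $\delta$. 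The crux is now to prove that $\sigma$ is geometric: that there is a torsion pair $(\kt,\kf)$ on $Coh(X)$ with heart $\ka=\langle\kf[1],\kt\rangle$ equal to $\ka(\beta,\omega)$. Following Bridgeland, I would deduce from the stability of the spanning family $\{\ko_x\}$ of common phase $1$ that every coherent sheaf has $\ka$-cohomology concentrated in degrees $0,-1$ and, dually, that every object of $\ka$ has sheaf cohomology concentrated in degrees $0,-1$; this is exactly the assertion that $\ka$ and $Coh(X)$ differ by a single tilt, and tracking the $\mu_\omega$-slopes identifies the torsion pair with the one defining $\ka(\beta,\omega)$.

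Once geometricity is established, $\sigma=\sigma(\beta,\omega)$ on the nose, and it remains to verify $exp(\beta+i\omega)\in\kl(X)$ so that $\sigma\in V(X)$. The membership $exp(\beta+i\omega)\in\pd_0(X)$ from (2) rules out the divisors $D(\delta)$; combined with $\omega^2>0$ and the compatibility of the tilt $\ka(\beta,\omega)$ with $\mu_\omega$-stability, this forces $exp(\beta+i\omega)$ into the chamber $\kl(X)\subset\pd_{0,A,C}(N,v_0)$, whence $\sigma\in V(X)$ and therefore $\sigma\in U(X)$. The main obstacle is the geometricity step of the previous paragraph: extracting the full t-structure of $\sigma$ from the single hypothesis that all $\ko_x$ are stable of a common phase. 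This is the technical heart of the argument, resting on the interaction of Harder--Narasimhan filtrations with the generation of $\cd^b(X)$ by the skyscrapers, and is precisely the content of \cite[Prop.\ 10.3]{BridgelandK3}.
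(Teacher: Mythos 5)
The paper does not prove this proposition at all: as the bracketed citation in its statement indicates, it is quoted verbatim from \cite{BridgelandK3} (Def.\ 10.2 and Prop.\ 10.3 there). Your sketch is a correct outline of Bridgeland's own argument --- the forward direction via $\GLT$-invariance of both conditions is sound, and the converse is correctly reduced, after the $Q(v_0)$-normalisation, to the geometricity/tilting statement (including the verification that $\omega$ is ample and that the chamber conditions defining $\kl(X)$ hold) --- and since you explicitly defer that technical core to \cite[Prop.\ 10.3]{BridgelandK3}, your proof ultimately rests on exactly the same citation as the paper's.
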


\subsection{Geometric refinements}

\begin{rem}\label{stab_diagram}
  Let us summarize the above discussion in the following diagram:
  \[ \xymatrix@R=0.7pc{
   U(X) \ar[ddd] \ar@{^(->}[r]_{\text{open}} & Stab^\dagger(X)
   \ar[rr]^{\pi} \ar[dd]_{\GLT} %^{\PL} 
   && \kp_0^+(X) \ar[dd]^{\pl}_{\GL}  \\ \\
   & Stab^\dagger(X)/\GLT \ar[rr]^{\bar{\pi}}         && \pd_0^+(X)  \\ 
    V(X) \ar@{^(->}[ur]_{\text{open}}  \ar@{<->}[rrrr]^{\isom}& && & \kl(X)  \ar@{_(->}[ul]^{\text{open}} 
  } \]
  Here we identify $V(X)$ with its image in $Stab^\dagger(X)/\GLT$. The maps $\pi,\bar{\pi}$
  are covering spaces. Moreover, the map $\pi: U(X) \ra \theta^{-1}(\kl(X))$
  is a covering space with fiber $\IZ$.
\end{rem}

\begin{lem}\label{sigma_extension_lemma}
  Consider the map $\sigma: \kl(X) \ra V(X)$ which maps $exp(\beta + i
  \omega)$ to the stability condition $\sigma(\beta,\omega)$.

  Let $\overline{\kl}_{0,>2}(X)$ be the closure of $\kl_{0,>2}=\kl(X) \cap \pd_{0,>2}^+(X)$ in $\pd_{0,>2}^+(X)$,
  and  $\overline{V}_{>2}(X)$ be the intersection of $\overline{V}(X)$ with $\pi^{-1}(\pd_{>2}(X))$.

  Then there is a unique continuous extension of $\sigma|_{\kl_{0,>2}}(X)$ to an isomorphism 
  \[ \sigma^*:\overline{\kl}_{0,>2}(X) \sra \overline{V}_{>2}(X).\] 
\end{lem}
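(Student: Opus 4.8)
The plan is to recognize $\sigma$ as a continuous section of the covering map $\pi\colon Stab^\dagger(X) \ra \kp_0^+(X)$ from Theorem \ref{stability_theorem}, and to extend this section over the boundary using the local homeomorphism property of coverings. Concretely, for $exp(\beta + i\omega) \in \kl(X)$ the central charge of $\sigma(\beta,\omega)$ is $Exp(\beta + i\omega)$, which lies in the section $Q(v_0)$ and satisfies $Exp(\beta+i\omega) = q_{v_0}(exp(\beta+i\omega))$ because $Exp(\beta+i\omega).v_0 = -1$. Since $\pi$ sends a stability condition to its central charge, we get $\pi \circ \sigma = q_{v_0}$ on $\kl_{0,>2}(X)$, so $\sigma$ is a section of $\pi$ over $q_{v_0}(\kl_{0,>2}(X))$. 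As $\overline{\kl}_{0,>2}(X) \subset \pd^+_{0,>2}(X) \subset \pd_0^+(X)$ and $\theta \circ q_{v_0} = \id$, the map $q_{v_0}$ carries the entire closure into the base $\kp_0^+(X)$ of the covering. Thus extending $\sigma|_{\kl_{0,>2}}$ amounts to extending a section of $\pi$ from $q_{v_0}(\kl_{0,>2})$ to $q_{v_0}(\overline{\kl}_{0,>2})$, every point of which lies over the base.

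First I would perform the pointwise extension. Fix a boundary point $p \in \overline{\kl}_{0,>2}(X) \setminus \kl_{0,>2}(X)$ and choose an evenly covered open neighbourhood $W \subset \kp_0^+(X)$ of $q_{v_0}(p)$, so that $\pi^{-1}(W) = \bigsqcup_\alpha W_\alpha$ with each $\pi|_{W_\alpha}\colon W_\alpha \sra W$ a homeomorphism. Provided the preimage $q_{v_0}^{-1}(W) \cap \kl_{0,>2}$ is connected, the section $\sigma$ takes values in a single sheet $W_{\alpha(p)}$, and I set $\sigma^*(p) = (\pi|_{W_{\alpha(p)}})^{-1}(q_{v_0}(p))$. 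Because $\overline{V}(X)$ is the closure of $V(X)$ inside the closed subset $Stab^\dagger(X)$ of $Stab(X)$, the resulting value is automatically a genuine stability condition lying in $\overline{V}(X)$, and over $\pd_{>2}$ in $\overline{V}_{>2}(X)$; no separate verification of the stability axioms at the limit is required.

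The hard part is the connectivity input used above: that each boundary point of $\kl_{0,>2}$ has arbitrarily small neighbourhoods meeting the chamber in a connected set, so that no sheet-jumping occurs and $\sigma^*(p)$ is well defined. Here I would use the product description $T_C(N,v_0) = A(v_0)_\IR \times C_0(L(v_0))$ together with $L(v_0) \isom NS(X)$: a boundary point corresponds to a pair $(\beta,\omega_0)$ with $\omega_0 \in \overline{Amp}(X)$, $\omega_0^2 > 2$, lying on one or several of the locally finite $(-2)$-walls $W(l)$. Locally the chamber $\kl_{0,>2}$ is cut out by the finitely many linear inequalities $\omega.l > 0$ active at $\omega_0$ together with $\omega^2 > 0$, and these describe a locally convex set, so a sufficiently small ball meets $Amp(X)$ in a connected (indeed convex) set. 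This is precisely where the hypothesis $\omega_0^2 > 2$ enters: by Lemma \ref{L_crit} we are in $\pd_A$ and away from all $W_A$-walls, so that only the $\pd_C$-walls are relevant and the local chamber structure is governed by the honest ample cone of $X$.

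Finally I would assemble the global statement. The extension $\sigma^*$ is continuous, since on each evenly covered neighbourhood it agrees with the continuous local sheet $(\pi|_{W_\alpha})^{-1}\circ q_{v_0}$, and it is the unique such extension because $\kl_{0,>2}$ is dense in $\overline{\kl}_{0,>2}$ and the target is Hausdorff. Injectivity follows from injectivity of $q_{v_0}$ together with the injectivity of $\pi$ on each sheet. For the identification of the image with $\overline{V}_{>2}(X)$, note that the image contains $\sigma(\kl_{0,>2}) = V(X) \cap \pi^{-1}(\pd_{>2})$ and, being the continuous image of $\overline{\kl}_{0,>2}$ which lies over $\pd_{>2}$, is contained in $\overline{V}(X)\cap \pi^{-1}(\pd_{>2}) = \overline{V}_{>2}(X)$; conversely every point of $\overline{V}_{>2}(X)$ is a limit of points $\sigma(\beta,\omega)$ whose parameters converge in $\overline{\kl}_{0,>2}$, hence lies in the image. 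Thus $\sigma^*\colon \overline{\kl}_{0,>2}(X) \ra \overline{V}_{>2}(X)$ is a continuous bijection, and since $\pi$ is a local homeomorphism its inverse is continuous as well, giving the asserted isomorphism.
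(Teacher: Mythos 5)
Your proof is correct, and it reaches the conclusion by a genuinely different mechanism than the paper. Both arguments begin the same way: $\sigma$ is recognized as a section of the covering $\pi\colon Stab^\dagger(X) \ra \kp_0^+(X)$ over $q_{v_0}(\kl_{0,>2}(X))$, and the problem becomes extending this section over $q_{v_0}(\overline{\kl}_{0,>2}(X))$. The paper then argues \emph{globally}: in the tube model the closure is identified with the set of $x+iy$ with $y \in \overline{Amp}(X)$, $y^2>2$, off the divisors $D(\delta)$, and the homotopy $(x+iy,t)\mapsto x+iy+ti\omega$ for a fixed ample $\omega$ deformation-retracts this closure into the open chamber; hence $\overline{\kl}_{0,>2}(X)$ is contractible, the covering is trivial over its image under $q_{v_0}$, and the section extends uniquely in one stroke. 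You instead argue \emph{locally}, extending sheet by sheet over evenly covered neighbourhoods, with the key input being that every sufficiently small ball around a boundary point meets the chamber in a connected (convex) set --- obtained from local finiteness of the walls, linearity of the active inequalities $\omega.l>0$, and Lemma \ref{L_crit} disposing of the $A$-type walls. Note that the two key lemmas (global contractibility versus local convexity at the boundary) rest on the same geometry, and the paper's retraction direction $+ti\omega$ is precisely what makes your local chambers convex. Your route is more elementary, needing only unique extension of local sections rather than triviality of the covering over a contractible set, and it has the additional virtue of explicitly verifying that the image of $\sigma^*$ is exactly $\overline{V}_{>2}(X)$ and that the inverse map is continuous --- points the paper's proof leaves implicit. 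One detail you should state explicitly in the convexity step: a boundary point of $\overline{\kl}_{0,>2}(X)$ lies in $\pd^+_{0,>2}(X)$, hence off the closed, locally finite union of divisors $D(\delta)$, so a sufficiently small ball avoids them entirely; only after this observation is the local chamber cut out by the finitely many active wall inequalities alone, as your argument requires.
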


\begin{proof}
  Under the isomorphism $exp: T(N(X),v_0) \ra \pd(X)$ the set
  $\overline{\kl}_{0,>2}(X)$ gets identified with 
  $ \Set{ x + i y }{ y \in \overline{Amp}(X), \; y^2 > 2, \; (*) }$ 
  where $(*)$ is the condition
  \[ ( x + i y ).l \notin \IZ \qtext{for all} l \in \Delta(NS(X)) \]
  As we have $y.l \geq 0$ for $y \in \overline{Amp}(X)$, we can
  retract $\overline{\kl}_{0,>2}(X)$ into the subset
  $\kl(X)_{>2}=\kl(X) \cap \pd_{>2}$ via the homotopy $(x+iy,t)
  \mapsto x+iy + t i \omega$ for $t \in [0,1]$ and $\omega \in
  Amp(X)$.  It follows that $\overline{\kl}_{0,>2}(X)$ is
  contractible.

  Hence, the restriction of the covering space $\pi:
  Stab^\dagger(X) \ra \kp_0^+(X)$ to $q(\overline{\kl}_{0,>2}(X))
  \subset \kp_0^+(X)$ is trivial (cf. Remark \ref{q_v_remark} for
  the definition of $q=q_{v_0}$) and there is a unique section $s$
  extending $\sigma \circ \theta: q(\kl(X)) \ra V(X)$ to
  $q(\overline{\kl}_{0,>2}(X))$. We now set $\sigma^* = s \circ q$.
\end{proof}

Next, we recall Bridgeland's description of the boundary of $U(X)$.
As a well known consequence we get a covering of the stability manifold by certain
translates of the closure $\overline{U}(X)$.

\begin{thm}{\cite[Thm.\ 12.1.]{BridgelandK3}}\label{U_boundary}
  The boundary $\del U(X)$ is contained in a locally finite union of real codimension-one 
  submanifolds.
  If $x \in \del U(X)$ is a general boundary point, i.e. lies only on one of these submanifolds, 
  then precisely one of the following possibilities hold.
  \begin{itemize}
  \item[$(A^{+})$] There is a rank $r$ spherical vector bundle $A$ such that
    the stable factors of the objects $\ko_x, x \in X$ are $A$ and $T_A(\ko_x)$.\footnote{ 
        Here $T_A$ denotes the spherical twist functor, cf.\ \cite[Sec.\ 8.1.]{HuybrechtsFM}. }
  \item[$(A^{-})$] There is a rank $r$ spherical vector bundle $A$ such that
    the stable factors of the objects $\ko_x, x \in X$ are $A[2]$ and $T_A^{-1}(\ko_x)$.
  \item[$(C_k)$] There is a non-singular rational curve $C$ and an integer $k$ such
    that $\ko_x$ is stable if and only if $x \notin C$.
    If $x \in C$, then it has a stable factor $\ko_C(k)[1]$.
  \end{itemize}
\end{thm}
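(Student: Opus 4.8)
The plan is to read off $\del U(X)$ from the characterization in Proposition \ref{stab_criterion}: a stability condition $\sigma=(\ka,z)\in Stab^\dagger(X)$ lies in $U(X)$ precisely when every skyscraper $\ko_x$ is stable of one common phase and $z\in\kp_0(X)$. Thus $\del U(X)$ is exactly the locus where one of these two conditions first degenerates, and the proof reduces to classifying how this happens at a general point. \textbf{Local finiteness.} A skyscraper $\ko_x$ can only be destabilized by a stable factor $E$ of the same phase whose mass is bounded by that of $\ko_x$; hence $z.v(E)$ stays bounded and lies on the ray $\IR_{>0}\,(z.v_0)$ (note $v(\ko_x)=v_0=(0,0,1)$ is independent of $x$). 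Since $N(X)$ is a lattice and the stability conditions on $Stab^\dagger(X)$ are locally finite, only finitely many Mukai vectors $v(E)$ can arise as such factors in a neighbourhood of a given boundary point, each cutting out one real codimension-one submanifold; moreover the failure of $z\in\kp_0(X)$ occurs exactly when $z.\delta=0$ for a spherical class $\delta$, so both conditions degenerate together along these walls. This produces the asserted locally finite union.

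Next I would analyse a general boundary point $x_0\in\del U(X)$, lying on a single one of these submanifolds. Approaching $x_0$ from inside $U(X)$ all skyscrapers are stable of one phase, so at $x_0$ the degenerate skyscrapers become strictly semistable and acquire a Jordan--Hölder filtration with stable factors $E_1,\dots,E_n$, all of phase $\phi(\ko_x)$. \textbf{Numerical classification (the heart of the matter).} Because $x_0$ lies on a single wall, genericity forces exactly two distinct stable classes $w_1,w_2$ to occur among the factors, with $m_1 w_1 + m_2 w_2 = v_0$. Since distinct stable objects of equal phase have no homomorphisms, Serre duality on the K3 gives $w_i.w_j=\dim\Ext^1(E_i,E_j)\geq 0$, and a non-split extension producing $\ko_x$ forces $w_1.w_2>0$; likewise every stable object satisfies $v(E)^2\geq -2$. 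Expanding $0=v_0^2=(m_1w_1+m_2w_2)^2$ against $w_1.w_2>0$ then forces at least one class, say $w_1=v(A)$, to be spherical, $v(A)^2=-2$. Squeezing the possibilities down to this single spherical factor and then identifying it geometrically is where I expect the main difficulty to lie.

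Finally I would split according to the position of $v(A)$ relative to $v_0$, which corresponds exactly to the walls of Definition \ref{wall_notation}. If $v(A).v_0\neq 0$, then $A$ has nonzero rank and the factor $A$ is independent of $x$, so all skyscrapers degenerate at once; here I would show $A$ is a shift of a $\mu$-stable spherical vector bundle, with complementary factor the cone $T_A(\ko_x)$ or $T_A^{-1}(\ko_x)$. The identity $v_0 = r\,v(A) + s_{v(A)}(v_0)$, with $s_{v(A)}(v_0)=v_0-r\,v(A)$ the reflection of Section \ref{sec:Lattice}, reproduces the factors in cases $(A^{+})$ and $(A^{-})$, the two signs recording the side of the wall $W_A(v(A),v_0)$ from which $x_0$ is approached, equivalently whether $A$ appears as a sub- or a quotient object. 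If instead $v(A).v_0=0$, then $A$ is supported in dimension one and I would identify it as $\ko_C(k)[1]$ for a non-singular rational $(-2)$-curve $C$; a nonzero morphism between $A$ and $\ko_x$ requires $x\in\Supp(A)=C$, so exactly the skyscrapers with $x\in C$ are destabilized, which is case $(C_k)$, governed by the wall $W_C(v(A),v_0)$. Proving that a \emph{stable} spherical object near the geometric chamber must take one of these two geometric shapes --- a $\mu$-stable bundle or a line bundle on a $(-2)$-curve --- is, together with the counting of the previous paragraph, the technically demanding part of the argument.
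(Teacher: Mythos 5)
The first thing to say is that the paper contains no proof of this statement at all: Theorem \ref{U_boundary} is quoted verbatim from Bridgeland \cite[Thm.\ 12.1]{BridgelandK3}, so your attempt can only be measured against Bridgeland's original argument. Your outline does reproduce its broad shape: $\del U(X)$ is governed by the criterion of Proposition \ref{stab_criterion}, its points are those where skyscrapers become strictly semistable, local finiteness comes from bounded-mass considerations, the Jordan--H\"older factors of a degenerate $\ko_x$ are analysed numerically (using $w_i.w_j=\dim\Ext^1(E_i,E_j)\geq 0$ for non-isomorphic stables of equal phase, $w_i^2\geq -2$, and $v_0^2=0$ to force a spherical class), and one then splits according to the rank of the spherical factor, matching the walls $W_A$ and $W_C$ of Definition \ref{wall_notation}. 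One assertion, however, is simply wrong: you claim that the failure of $z\in\kp_0(X)$ ``degenerates together'' with skyscraper stability along these walls. By Theorem \ref{stability_theorem} the map $\pi\colon Stab^\dagger(X)\ra\kp_0^+(X)$ is a covering, and $\del U(X)$ lies in $Stab^\dagger(X)$ (a connected component is closed), so the central charge of \emph{every} boundary point still lies in $\kp_0(X)$; the boundary is accounted for entirely by condition (1) of Proposition \ref{stab_criterion}, and the hyperplanes $z.\delta=0$ are never reached.

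The more serious problem is that everything which makes the theorem non-trivial is deferred, as you yourself concede twice. Knowing that some stable factor $A$ of $\ko_x$ satisfies $v(A)^2=-2$ is far from the stated trichotomy: one must prove that for $r(A)\neq 0$ the factor is (a shift of) a Gieseker-stable spherical vector \emph{bundle}, that the complementary factor is precisely $T_A(\ko_x)$ resp.\ $T_A^{-1}(\ko_x)$, and that the shift $A[2]$ --- not $A$ or $A[1]$ --- is what occurs in case $(A^-)$; and that for $r(A)=0$ the factor is $\ko_C(k)[1]$ for a \emph{smooth rational} curve $C$, with exactly the points $x\in C$ destabilized and a single integer $k$ determined. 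This identification is the actual content of Bridgeland's Section 12 (resting on the structure of the hearts $\ka(\beta,\omega)$, $\mu$-stability of sheaves, and his analysis of stable objects of phase one near the large volume limit), and nothing in your sketch substitutes for it. In addition, your ``exactly two distinct stable classes'' step needs an argument: generality of the boundary point only forces the classes of all factors into a rank-two sublattice containing $v_0$, and such a sublattice can a priori contain more than two classes of factors; ruling that out, or handling it, is part of the work (your reduction of $w_1.w_2>0$ to indecomposability of $\ko_x$ also silently uses a d\'evissage to split off isotypic pieces). So what you have is a correct road map with the two technically demanding stages --- precisely the ones carrying the theorem --- left open; it is not a proof.
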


\begin{rem}\label{U_boundary_rem}
  If $\sigma \in \overline{U}(X)$ satisfies condition $(A^+)$ or
  $(A^-)$, then the central-charges of $\ko_x$ and $A$ are co-linear:
  $Z(A)/Z(v_0) \in \IR_{>0}$.  This is precisely the condition we used
  in Definition \ref{wall_notation} to define the A-type wall
  $W_A(v(A),v_0) \subset \pd(X)$.
  Therefore the image $\overline{\pi}(\sigma)$ of in $\pd(X)$ lies on $W_A(v(A),v_0)$.

  Similarly, if $\sigma \in \overline{U}(X)$ satisfies condition $(C_k)$, then
  $\overline{\pi}(\sigma)$ lies on the wall $W_C(v(\ko_C),v_0)$ of type $C$.
\end{rem}

\begin{defn}
  Let $\WT(X) \subset Aut(\cd^b(X))$ be the group generated by the
  spherical twists $T_A^2,T_{\ko_C(k)}$ for all $(-2)$-curves $C$, $k
  \in \IZ$ and spherical vector bundles $A$, which occur in the
  description of the boundary $\del U(X)$ given in Theorem
  \ref{U_boundary}.
\end{defn}

\begin{rem}
  One can check, that all equivalences $\Phi \in \WT(X)$ have the
  property $\Phi^H(v_0)=v_0$.  This means we get a map 
  \[\WT(X) \lra  W^0(N(X),v_0) \subset O(N(X)),  \quad \Phi \mapsto \Phi^H.\]  
  As we will see in Proposition \ref{boundary_lemma}, this map is surjective.
\end{rem}

\begin{rem}\label{WT_Dagger}
  All equivalences $\Phi \in \WT(X)$ respect the distinguished component $Stab^\dagger(X)$.
  Indeed, the spherical twists $T_{\ko_C(k)}$ and $T_A^2$, which generate $\WT(X)$,
  map the corresponding boundary components of $\del U(X)$ into $\del U(X)$, 
  cf.\ {\cite[Thm.\ 12.1.]{BridgelandK3}}.
  We will study equivalences with this property more closely in section \ref{sec:aut_dagger}.
\end{rem}

The following lemma is an easy consequence of the proof of \cite[Prop.\ 13.2.]{BridgelandK3}.
\begin{lem}\label{cover_stab}
  The translates of the closed subset $\overline{U}(X)$ under the group $\WT(X)$ cover $Stab^\dagger(X)$:
  \[    \bigcup_{\Phi \in \WT(X)}  \Phi_* \overline{U}(X) =
  Stab^\dagger(X). \]
\end{lem}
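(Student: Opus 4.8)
The plan is to exploit the connectedness of $Stab^\dagger(X)$. Write $S=\bigcup_{\Phi \in \WT(X)} \Phi_* \overline{U}(X)$. Each generator of $\WT(X)$ respects the distinguished component (Remark \ref{WT_Dagger}), so $S \subseteq Stab^\dagger(X)$, and $S$ is nonempty since it contains $\overline{U}(X)$. Because $Stab^\dagger(X)$ is connected, it suffices to show that $S$ is simultaneously open and closed. Note also that $\WT(X)$ acts on $S$ by homeomorphisms, so in every local argument below we are free to translate the given point into $\overline{U}(X)$.

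For closedness I would verify that the family $\{\Phi_* \overline{U}(X)\}_{\Phi \in \WT(X)}$ is locally finite, since a locally finite union of closed sets is closed. Local finiteness follows from Theorem \ref{U_boundary} together with the local finiteness of the walls in $\pd_0^+(X)$ (\cite[Lem.\ 11.1]{BridgelandK3}): only finitely many translates of $\overline{U}(X)$ can meet a given compact subset of $Stab^\dagger(X)$. Equivalently, the complement of $S$ is open, as a point outside $S$ has a neighbourhood meeting only finitely many of the (closed) translates, none of which contains it.

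For openness let $\sigma \in S$; after translating by an element of $\WT(X)$ we may assume $\sigma \in \overline{U}(X)$. If $\sigma \in U(X)$ there is nothing to prove. If $\sigma$ is a general boundary point of $\del U(X)$, it lies on exactly one of the walls of Theorem \ref{U_boundary}, of type $(A^\pm)$ or $(C_k)$. The side of the wall facing $U(X)$ is covered by $\overline{U}(X)$; for the opposite side, Bridgeland's description of wall crossing in the proof of \cite[Prop.\ 13.2]{BridgelandK3} identifies the neighbouring region as $\Psi_* U(X)$, where $\Psi$ is the associated generator of $\WT(X)$ --- namely $T_A^{2}$ for a wall of type $(A^\pm)$ and $T_{\ko_C(k)}$ for a wall of type $(C_k)$. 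This is precisely the content of Remark \ref{WT_Dagger}, that these twists carry the relevant boundary component of $\del U(X)$ back into $\del U(X)$. Hence a full neighbourhood of $\sigma$ lies in $\overline{U}(X) \cup \Psi_* \overline{U}(X) \subseteq S$. Finally, at a non-general boundary point $\sigma$ the wall-and-chamber structure of \cite[Prop.\ 13.2]{BridgelandK3} shows that a small ball $B$ meets only finitely many walls and that each resulting chamber is contained in a single translate of $U(X)$; thus $B$ minus the walls lies in $S$, and since the translates are closed the remaining wall points are limits of chamber points within the same translates, whence $B \subseteq S$.

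The main obstacle is the wall-crossing input at general boundary points: one must match each wall type $(A^\pm)$, $(C_k)$ to the correct generating equivalence of $\WT(X)$ and check that its translate of $U(X)$ occupies the chamber across that wall. This is exactly the geometry extracted from the proof of \cite[Prop.\ 13.2]{BridgelandK3}; once it is in hand, the two-sided covering near general boundary points, combined with closedness and the density of the general boundary points in $\del U(X)$, upgrades to a covering of a full neighbourhood of every boundary point, and hence of all of $Stab^\dagger(X)$.
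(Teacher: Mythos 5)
Your argument is correct and is essentially the same approach as the paper's: the paper offers no independent proof but declares the lemma an easy consequence of the proof of \cite[Prop.\ 13.2]{BridgelandK3}, and that proof is exactly the open-and-closed connectedness argument you reconstruct, with the wall-crossing input at general boundary points (Theorem \ref{U_boundary}, Remark \ref{WT_Dagger}) matching each wall type to a generator of $\WT(X)$. One caution: your closing remark that density of general boundary points plus closedness of $S$ by itself upgrades to openness at non-general boundary points is not a valid implication, but this is harmless, since your finite wall-and-chamber argument in the preceding paragraph (chambers of a small ball covered one wall-crossing at a time, then wall points recovered as limits inside the closed translates) already handles non-general boundary points correctly.
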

One can show, moreover, that the intersections of the interiors $U(X)\cap \Phi_*
U(X)$ are empty unless $\Phi=id$. However, we will not need this refinement.

%%% Local Variables: 
%%% mode: latex
%%% TeX-master: "Cusps"
%%% End: 

\section{Cusps and hearts of stability conditions}\label{sec:Degenerations}
In this section we proof our main geometric results about cusps and 
stability conditions.
We will introduce the notion of a linear degeneration to a cusp
in the K\"ahler moduli space and classify all paths in the stability
manifold mapping to linear degenerations. 
Moreover, we construct paths in the stability manifold
with special limiting hearts.

We will use the results proved in appendix \ref{sec:aut_dagger}.

\subsection{Linear degenerations in $\KM(X)$}
\begin{defn}
  Let $\gamma(t) \in KM(X), t \gg 0 $ be a path in the K\"ahler moduli 
  space and $[v]$ a standard cusp of $\KM(X)$.

  We say $\gamma(t)$ is a {\em linear degeneration} to a cusp $[v] \in \overline{KM}(X)$
  if there exists a lift $\alpha(t)$ of $\gamma(t)$ to $\pd^+(X)$ 
  and a vector $w \in \Gamma_X \cdot v$ such that
  \[ \alpha(t) = exp_w(x_0 + i\, t \, y_0 ) \]
  for some   $x_0  \in A(w)_\IR, \; y_0 \in C(L(w))$.
\end{defn}

\begin{prop}\label{lin_deg_prop}
  Let $\gamma(t)$ be a linear degeneration to $[v] \in \KM(X)$. 
  \begin{enumerate}
  \item The limit of $\gamma(t)$ in $\KM(X)$ is  
   \[ \limt \gamma(t)=[v] \in \KM(X). \]
  \item If $\beta(t)$ is another lift of $\gamma(t)$ to
    $\pd^+(X)$, then there is a $g \in \Gamma_X$ such that 
    \[ \beta(t) = exp_{w'}(x'_0 + i \, t\,  y'_0) \]
    for $w'=g \cdot w,\; x'_0=g \cdot x_0,\; y'_0 = g\cdot y_0$ and $t\gg 0$.
  \end{enumerate}
\end{prop}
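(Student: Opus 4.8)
The plan is to treat the two parts separately, with part (1) following from a direct limit computation in the tube model and part (2) from an orbit/uniqueness argument. For part (1), I would work entirely inside the period domain via the isomorphism $exp_w : T(N(X),w) \sra \pd^+(X)$ of Lemma \ref{tube_model}, which is canonical and depends only on the isotropic vector $w$. Writing $\alpha(t) = exp_w(x_0 + i\,t\,y_0)$ and using the section $Q(w)$ from the proof of Lemma \ref{tube_model}, the representative vector in $N_\IC$ is determined by the normalization $z.w = -1$. As $t \to \infty$ the real part $x_0$ and the fixed direction $y_0$ are dominated by the growing imaginary component $t\,y_0$; rescaling the projective class $[\alpha(t)]$ and passing to the limit, the class converges to a boundary point of $\pd^+(X)$ lying on the isotropic line $\IC w$. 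Concretely, I expect that $\limt [\alpha(t)] = [w]$ in $\IP(N_\IC)$, which is exactly the rational boundary point defining the cusp $[v] = [w]$ in the Baily--Borel compactification $\overline{KM}(X)$. Descending along the quotient map $\pd^+(X) \to KM(X) = \Gamma_X \setminus \pd^+(X)$ then gives $\limt \gamma(t) = [v]$.

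For part (2), the two lifts $\alpha(t)$ and $\beta(t)$ of the same path $\gamma(t)$ differ, at each $t$, by the deck action of $\Gamma_X$ on $\pd^+(X)$. Since $\Gamma_X$ acts properly discontinuously and the lifts are continuous, the element $g(t) \in \Gamma_X$ with $\beta(t) = g(t)\cdot \alpha(t)$ must be locally constant in $t$, hence constant for $t \gg 0$; call it $g$. Applying the equivariance of the tube model (Lemma \ref{equivariance}) to this $g$, the diagram there gives $g \cdot exp_w(x_0 + i\,t\,y_0) = exp_{w'}(g\cdot x_0 + i\,t\,(g\cdot y_0))$ with $w' = g\cdot w$, since $g$ acts $\IR$-linearly and therefore commutes with the affine/linear structure on $T(N,w)$ and with scaling of the imaginary part by $t$. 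This immediately yields $\beta(t) = exp_{w'}(x'_0 + i\,t\,y'_0)$ with $x'_0 = g\cdot x_0$, $y'_0 = g\cdot y_0$, and $w' = g\cdot w \in \Gamma_X\cdot v$, as claimed.

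The main obstacle I anticipate is making the limit in part (1) rigorous, in particular identifying precisely which boundary point of $\overline{KM}(X)$ the rescaled projective classes converge to and confirming it is the cusp indexed by $[w]$. This requires care about the choice of projective representative and about how the Baily--Borel boundary attaches to $\pd^+(X)$; I would verify that the isotropic line $\IC w \subset N_\IC$ is exactly the rational boundary component corresponding to the standard vector $w$, using the bijection of Remark \ref{std_vectors} between $\Gamma_X$-orbits of standard vectors and standard cusps. Part (2) is comparatively routine once the locally-constant-deck-transformation principle is invoked, so I would keep that argument brief and concentrate the writing on the limit computation.
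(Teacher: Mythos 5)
Your conclusion in both parts is correct, but both arguments have genuine gaps, and in each case the gap is exactly the point the paper's proof is designed to handle.

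In part (1), the computation that the normalized representatives $Exp_w(x_0+ity_0)$ converge to $[w]$ in $\IP(N(X)_\IC)$ is fine, but the final step --- ``descending along the quotient map then gives $\limt\gamma(t)=[v]$'' --- is not valid, because the Baily--Borel topology on $\KM(X)$ near a cusp is \emph{not} the quotient of the subspace topology inherited from $\IP(N(X)_\IC)$. Already for $\rho=1$, where $\pd^+(X)$ is the upper half plane, the points $z_n=n+i$ converge to $\infty$ in $\IP^1(\IC)$, yet their images in the modular curve form a constant sequence (each $z_n$ is equivalent to $i$) and do not approach the cusp: Satake neighborhoods of the cusp are horoballs $\{\,\mathrm{Im}(z)>M\,\}$, which $z_n$ never enters. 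So projective convergence of lifts to the rational boundary point does not descend, and indeed your argument nowhere uses the defining feature of a linear degeneration, namely that the imaginary part is $t\,y_0$ with $y_0$ a \emph{fixed} vector in the positive cone $C(L(w))$. The obstacle you flag (identifying which boundary point) is the easy part; the real issue is the topology. The paper's proof works with Looijenga's explicit neighborhood basis $U(K,v)=\Gamma_v\cdot(L(v)_\IR\times C^+(L(v)))\cdot K$ of the cusp \cite{Looijenga} and checks that a linear degeneration eventually enters each such set --- which holds precisely because $t\,y_0 - y_K \in C^+(L(v))$ for $t\gg 0$ when $y_0$ lies in the open cone.

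In part (2), the ``locally-constant-deck-transformation principle'' you invoke requires the $\Gamma_X$-action on $\pd^+(X)$ to be free, so that the quotient map is a covering and $g(t)$ is well defined. The paper explicitly warns that the action is \emph{not} free: points may have nontrivial stabilizers, so $g(t)$ is neither unique nor automatically locally constant. Concretely, if $\beta=g_1\cdot\alpha$ on $[t_0,t_1]$ and $\beta=g_2\cdot\alpha$ on $[t_1,t_2]$, all one learns at $t_1$ is that $g_2^{-1}g_1$ fixes $\alpha(t_1)$, which for a non-free action does not force $g_1=g_2$, nor does it obviously permit a single $g$ valid on a neighborhood of $t_1$. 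Repairing this is the actual content of the paper's proof: one first reduces to $g\in\Gamma_v$ using a Baily--Borel subbasis of cusp neighborhoods on which $\Gamma_v$-orbits inject into $\KM(X)$ \cite{BailyBorel}; then, since $\Gamma_v$ acts \emph{linearly} in the tube coordinates, the fixed locus of any $g\in\Gamma_v$ either contains the entire path $\alpha$ or is disjoint from it; finally, properness (only finitely many $g$ with $g\cdot\alpha(t)=\beta(t)$ for some $t$) together with connectedness of the interval yields a uniform $g$. Your concluding use of Lemma \ref{equivariance} to rewrite $g\cdot exp_w(x_0+ity_0)$ as $exp_{w'}(x_0'+ity_0')$ is correct once a constant $g$ exists, but producing that constant $g$ is the hard step, not a routine one.
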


\begin{proof}
  In \cite[2.2]{Looijenga} Looijenga constructs a basis of
  neighborhoods of $[v] \in KM(X)$ as follows.  Let $\Gamma_v = \{ g
  \in \Gamma^+_X \,|\, g \cdot v = v \}$. Consider the exponential
  parametrization
  \[ exp_v: A(v)_\IR \times C^+(L(v)) \lra \pd^+(X). \]
  The semi-group $L(v)_\IR \times C^+(L(v))$ acts on $A(v)_\IR \times C^+(L(v))$
  by translation, and hence also on $\pd^+(X)$. 
  For an open subset $K$ of $\pd^+(X)$ let 
  \[  U(K,v) = \Gamma_v \cdot (L(v)_\IR \times C^+(L(v))) \cdot K \subset \pd^+(X), \]
  which is also an open subset of $\pd^+(X)$.

  Then the images of $U(K,v)$ in $KM(X)=\Gamma_X^+ \setminus \pd^+(X)$,
  where $K \subset \pd^+(X)$ runs through all open and non-empty subsets of $\pd^+(X)$,
  form a basis of neighborhoods of $[v] \in \overline{KM}(X)$.

  It is easy to see that every linear degeneration $\gamma(t) \in KM(X)$ 
  lies eventually in any of the subsets $U(K,v)$.
  This shows the first claim.

  We now proceed to the second claim. By assumption, there are $g(t) \in
  \Gamma^+_X$ such that $ \beta(t) = g(t) \cdot \alpha(t)$. We have to
  show that it is possible to choose $g(t)$ independent of $t$ for $t
  \gg 0$. Note that, the action of $\Gamma_X$ on $\pd(X)$ is not fixed
  point free, and hence the elements $g(t)$ itself may depend
  on $t$.

  By \cite[Thm.\ 4.9.\ iv)]{BailyBorel} there is a subbasis $\{ U(K',v) \}$ of neighborhoods 
  of the cusp $[v]$ such that $\Gamma_v \setminus U(K',v)$ injects into $KM(X)$.
  
  Note that $\Gamma_v$ acts linearly on the tube model,
  i.e. $g \cdot \exp(x + iy) = exp( g \cdot x + i g \cdot y)$
  for the canonical action of $\Gamma_v$ on $A(v)$ and $L(v)$.
  Therefore the fixed point locus of an element $g \in \Gamma_v$ 
  is either disjoint form $\alpha(t)$ or contains $\alpha(t)$ for all $t$.

  This can be used to construct a uniform $g=g(t)$.
  Indeed, let $t_0 \gg 0$ and suppose, without loss of generality, that $\alpha(t_0)=\beta(t_0)$. 
  For $g \in \Gamma_v$ the closed sets
  \[ C(g) = \Set{ t \in [0,1] }{ g \cdot \alpha(t_0 + t) = \beta(t_0 + t) }  \]
  cover the interval $[0,1]$. Our argument above shows that 
  if $C(g) \cap C(g') \neq \emptyset$, then $C(g)=C(g')$. 
  By the properness of the action we see that $C(g) \neq \emptyset$ only for finitely many $g$.
  But an interval cannot be covered non-trivially by finitely many disjoint closed subsets.
  If follows that $C(\id)=\IR_{\geq t_0}$.
\end{proof}

\subsection{Linear degenerations of stability conditions}
We have the following natural maps from the stability manifold to the
K\"ahler moduli space.
\[
  \xymatrix{
    Stab^\dagger(X) \ar[d]^{\pi} \ar[dr]^{\bar{\pi}}  \ar[drr]^{\tilde{\pi}} & &  \\
    \kp_0^+(X) \ar[r]^{\pl} & \pd_0^+(X) \ar[r] &  \Gamma_X^+ \setminus \pd^+_0(X)=KM_0(X)
  }
\]
The goal of this section is to proof the following Theorem.
\begin{thm}\label{degeneration_thm}
  Let $[v] \in \KM(X)$ be a standard cusp and $ \sigma(t) \in
  Stab^\dagger(X)$ be a path in the stability manifold with the
  property that $\tilde{\pi}(\sigma(t)) \in \KM(X)$ is a linear
  degeneration to $[v]$.  Let $Y$ be the K3 surface
  associated to $[v]$ by Ma's theorem \ref{Ma_Thm}.  Then there
  exist
  \begin{enumerate}
  \item a derived equivalence $\Phi: \cd^b(Y) \xlra{\sim} \cd^b(X)$
  \item classes $x \in NS(Y)_\IR$, $y \in \overline{Amp}(Y)$ and
  \item a path $g(t) \in \GLT$
  \end{enumerate}
  such that 
  \[ \sigma(t)  =  \Phi_* (\sigma_Y^*(x,t\, y) \cdot g(t))  \]
  for all $t \gg 0$.\footnote{
    The stability condition $\sigma_Y^*(x,y)$ was
    constructed in Lemma \ref{sigma_extension_lemma}.
    If $y \in Amp(X)$, then $\sigma_Y^*(x,y )$ 
    agrees with Bridgeland's stability condition
    $\sigma_Y(x,y)$ (cf.\ Definition \ref{Existence}.)
  }

  Moreover, the hearts of $\sigma(t) \cdot g(t)^{-1}$ are independent
  of $t$ for $t \gg 0$.
  If $y \in Amp(X)$, then the heart can be explicitly described
  as the tilt $\ka_Y(x,y)$ of $Coh(Y)$.
\end{thm}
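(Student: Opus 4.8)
The plan is to reduce the statement, by means of the auto-equivalences constructed in the appendix, to the large volume limit of $Y$ --- where Bridgeland's explicit stability conditions live --- and then to match $\sigma(t)$ with a $\GLT$-translate of $\sigma^*_Y(x,t\,y)$ by comparing their images in the period domain and lifting through the Galois cover $\pi$ of Theorem \ref{stability_theorem}.

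First I would replace the lift in the linear degeneration by $\bar\pi(\sigma(t))$ itself; by Proposition \ref{lin_deg_prop}(2) this gives $\bar\pi(\sigma(t)) = exp_w(x_0 + i\,t\,y_0)$ for some $w \in \Gamma_X \cdot v$ and all $t \gg 0$. Using the reduction to the large volume limit from appendix \ref{sec:reduction_to_lvl}, together with the surjectivity of $Aut^\dagger(\cd^b(X)) \to O^+_{Hodge}$ (Proposition \ref{surj_of_aut}), I obtain an equivalence $\Phi_0 \colon \cd^b(Y) \sra \cd^b(X)$ respecting the distinguished component with $\Phi_0^H(v_0) = w$, where $v_0 = (0,0,1) \in N(Y)$. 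Since $\pi$ intertwines $(\Phi_0^{-1})_*$ with $(\Phi_0^{-1})^H$ and $\theta$ is equivariant (Lemma \ref{equivariance}), the stability conditions $\tau_0(t) = (\Phi_0^{-1})_*\sigma(t) \in Stab^\dagger(Y)$ satisfy $\bar\pi(\tau_0(t)) = exp_{v_0}(x_1 + i\,t\,y_1)$ with $y_1$ in the positive cone of $NS(Y)$. Next, using the surjectivity of $\WT(Y) \to W^0(N(Y),v_0)$ (Proposition \ref{boundary_lemma}), I choose $\Phi_1 \in \WT(Y)$, which respects the distinguished component (Remark \ref{WT_Dagger}) and fixes $v_0$, whose image $\psi \in W^0(N(Y),v_0)$ carries $y_1$ into $\overline{Amp}(Y)$. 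Setting $\Phi = \Phi_0\circ\Phi_1^{-1}$ and $\tau(t) = (\Phi^{-1})_*\sigma(t)$, equivariance gives $\bar\pi(\tau(t)) = exp_{v_0}(x + i\,t\,y)$ with $x = \psi(x_1) \in NS(Y)_\IR$ and $y = \psi(y_1) \in \overline{Amp}(Y)$, $y^2 > 0$. As $\bar\pi(\tau(t)) \in \pd_0^+(Y)$ and $(t\,y)^2 > 2$ for $t \gg 0$, a direct comparison with the description of $\overline{\kl}_{0,>2}(Y)$ in the proof of Lemma \ref{sigma_extension_lemma} shows that $exp_{v_0}(x + i\,t\,y) \in \overline{\kl}_{0,>2}(Y)$, so that $\sigma^*_Y(x,t\,y)$ is defined and $\bar\pi(\sigma^*_Y(x,t\,y)) = exp_{v_0}(x+i\,t\,y)$ as well.

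The crux is to upgrade this equality of $\bar\pi$-images to an equality of stability conditions. Both $\pi(\tau(t))$ and $\pi(\sigma^*_Y(x,t\,y))$ lie in the $\theta$-fibre over $exp_{v_0}(x+i\,t\,y)$, a principal $\GL$-orbit, so there is a unique $\bar h(t) \in \GL$ with $\pi(\tau(t)) = \pi(\sigma^*_Y(x,t\,y))\cdot \bar h(t)$, which I lift continuously to $g(t) \in \GLT$ over the connected interval $t \in [T,\infty)$. Then $\tau(t)$ and $\sigma^*_Y(x,t\,y)\cdot g(t)$ have the same image under the Galois cover $\pi$, hence differ by a deck transformation $D(t) \in Aut^\dagger_0(\cd^b(Y))$; continuity forces $D(t)$ to be a constant $D$, and $D$ acts trivially on cohomology. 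Replacing $\Phi$ by $\Phi\circ D$ (which changes neither $x,y$ nor the property of respecting the distinguished component) absorbs $D$ and yields $\sigma(t) = \Phi_*(\sigma^*_Y(x,t\,y)\cdot g(t))$ for all $t \gg 0$. I expect this covering-space bookkeeping --- producing a continuous lift $g(t)$ and showing that the residual deck transformation is constant and cohomologically trivial --- to be the main obstacle.

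For the final assertion, the commuting of the left $Aut$-action and the right $\GLT$-action gives $\sigma(t)\cdot g(t)^{-1} = \Phi_*(\sigma^*_Y(x,t\,y))$, whose heart is $\Phi(\ka(\sigma^*_Y(x,t\,y)))$. If $y \in Amp(Y)$ then $\sigma^*_Y(x,t\,y) = \sigma_Y(x,t\,y)$ has heart $\ka_Y(x,t\,y) = \ka_Y(x,y)$, independent of $t$ by Remark \ref{hearts}, giving the explicit tilt $\Phi(\ka_Y(x,y))$. In general $t \mapsto \sigma^*_Y(x,t\,y)$ is a path in $\overline{V}_{>2}(Y)$, and using the continuity of the extension $\sigma^*$ together with the local finiteness of the walls one checks that its heart is constant for $t \gg 0$, which completes the proof.
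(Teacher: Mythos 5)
Your construction of $\Phi$, $x$, $y$ and $g(t)$ is essentially sound and follows the same overall strategy as the paper (reduce to the large volume limit via appendix \ref{sec:reduction_to_lvl}, then match lifts through the Galois cover $\pi$ using Lemma \ref{sigma_extension_lemma}), but the mechanics differ at two points. Where you move $y_1$ into $\overline{Amp}(Y)$ by a Weyl-group element realized in $\WT(Y)$ --- which needs the standard K3 fact that the nef cone is a fundamental domain for the group generated by $(-2)$-curve reflections acting on the positive cone, a fact not stated in the paper, though only the easy inclusion of the reflections $s_{v(\ko_C(k))}$ in the image of $\WT(Y)$ is required --- the paper instead fixes a single time $t_0$ with $(t_0y_0)^2>2$ and applies Lemma \ref{cover_stab} plus an even shift $\Sigma_{2k}$ to move $\sigma(t_0)$ into $\overline{V}_{>2}(X)$; that $y\in\overline{Amp}$ then falls out of the isomorphism $\sigma^*\colon \overline{\kl}_{0,>2}\sra\overline{V}_{>2}$ rather than being arranged beforehand. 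Your fibre-comparison/deck-transformation bookkeeping is fine and interchangeable with the paper's argument: the paper constructs $g(t)$ first from the section $q_{v_0}$ of Remark \ref{q_v_remark} and then invokes unique path lifting, which is also what underlies your claim that $D(t)$ is constant. So for the first assertion of the theorem your route is a legitimate, mildly different variant.

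The genuine gap is in your last paragraph, i.e.\ the ``Moreover'' clause when $y \in \overline{Amp}(Y)\setminus Amp(Y)$. You claim constancy of the heart of $\sigma^*_Y(x,ty)$ for $t\gg 0$ follows from ``continuity of the extension $\sigma^*$ together with the local finiteness of the walls''. That justification does not work: local finiteness of walls governs where objects of a \emph{fixed} Mukai vector change stability, not where the heart changes, and the heart $\kp_t((0,1])$ of a stability condition is not locally constant along paths in any general sense --- objects enter and leave it whenever their phases cross $0$ or $1$. Moreover, for $y$ on the boundary of the nef cone there is no explicit tilt description of $\ka(\sigma^*_Y(x,ty))$, so Remark \ref{hearts} cannot be applied. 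The paper spends roughly a third of its proof on exactly this point: it shows (1) that an object of phase in $(0,1)$ at one time has phase in $[0,1]$ at all later times, by perturbing to $\sigma^*(x,t(y+s\omega))$ with $\omega$ ample --- where the hearts \emph{are} constant by Remark \ref{hearts} --- and letting $s\to 0$; and (2),(3) that objects of phase exactly $0$ (resp.\ $1$) keep that exact phase, using mass-boundedness and finiteness of the set of Mukai vectors of semistable factors (\cite[Lem.\ 9.3]{BridgelandK3}), the observation that $Im(Z_t(v))$ vanishes for one $t\geq t_0$ if and only if it vanishes for all, and a compactness argument producing a uniform phase gap $\alpha$; finally a Harder--Narasimhan regrouping argument combines (1)--(3). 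None of this delicate analysis is captured by your one-line appeal, so this part of your proposal is missing a real proof.
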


\begin{proof}
  By Proposition \ref{lin_deg_prop}, $\tilde{\pi}(\sigma(t))$ is a
  linear degeneration if and only if
  \[ \bar{\pi}(\sigma(t)) = exp_w(x_0 + i\, t \, y_0 ) \in \pd_0^+(X) \]
  for some $w \in \Gamma_X \cdot v$, $x_0  \in A(w)_\IR$, $y_0 \in C(L(w))$.

  Note that, $w$ is a standard vector and the Hodge structure on
  $w^\perp / w$ induced by $\HT(X,\IZ)$ is isomorphic to $v^\perp/v$.
  So, by Theorem
  \ref{reduction_to_lvl} and Remark \ref{fm_criterion} there is a
  derived equivalence $\Phi: \cd^b(X) \ra \cd^b(Y)$ such that
  $\Phi^H(w)=v_0$, and $\Phi_* \sigma(t) \in Stab^\dagger(Y)$.
  Hence we may assume, without loss of generality, that $w=v_0$.

  Now we claim, that there is a continuous path $g(t) \in \GLT$ such that 
  \[ \pi(\sigma(t) \cdot g(t)) = Exp_{v_0}(x_0 + i \, t \, y_0 ) \in
  \kp_0^+(X). \] Moreover, two such paths $g(t),g'(t)$ differ by an even
  shift $\Sigma_{2k}, k \in \IZ$ (cf.\ Example \ref{shift_example}),
  i.e. $g'(t) = \Sigma_{2k} \circ g(t)$.
  
  Indeed, in Remark \ref{q_v_remark} we constructed a section $q_{v_0}:\pd(X) \ra
  \kp(X)$ of the $\GL$-action on $\kp(X)$. 
  Hence, there is a unique $h(t) \in \GL$ such that
  $\pi(\sigma(t)) \cdot h(t) = q_{v_0}(exp_{v_0}(x_0 +i t y_0 ) ) =
  Exp_{v_0}(x_0 +i t y_0)$.  Every choice of a continuous lift $g(t)$ of
  $h(t)$ to $\GLT$ has the required property.  As $\GLT \ra \GL$ is a
  Galois cover with Galois group $\IZ$ acting by even shifts $k \mapsto
  \Sigma_{2k}$ the latter statement follows.

  We choose a $t_0>0$ such that $(t_0 y_0)^2>2$. 
  Lemma \ref{cover_stab} shows, that there is an auto-equivalence
  $\Psi \in \WT(X) \subset Aut^\dagger(\cd^b(X))$, such that
  $\Psi_* \sigma(t_0) \in \overline{U}(X)$.
  It is easy to see, that  there is a (unique) $k \in \IZ$ 
  such that $\Psi_* \sigma(t_0) \cdot \Sigma_{2k} \in \overline{V}(X)$.
  This allows us to assume, without loss of generality, that $\sigma(t_0) \in \overline{V}(X)$.

  By assumption on $t_0$, we have furthermore $\sigma(t_0)\in \overline{V}_{>2}(X)$.
  Now Lemma \ref{sigma_extension_lemma} shows that
  \[ \sigma(t_0) = \sigma^*_X(x, t_0 y). \] 
  We claim that the same holds for all $t \geq t_0$.
  Indeed, let $\sigma'(t)=\sigma_X^*(x, t y),\, t \geq t_0$. Then
  $\sigma(t)$ and $\sigma'(t)$ are
  two lifts of the path $Exp_{v_0}(x + ity) \in \kp_0^+(X)$ to $Stab^\dagger(X)$
  with the same value at $t=t_0$.  As $\pi$ is a covering-space we have
  $\sigma(t)=\sigma'(t)$.  This shows the claim and therefore the first part of
  the proposition.

  It remains to show, that the hearts $ \sigma^*_X(x, t y)$ are independent of
  $t \geq t_0$.  In the case $y \in Amp(X)$ this follows directly from Remark
  \ref{hearts}. The general case is more involved:

  We introduce the symbol  $\ka(t)$ for the heart, and  $\kp_t(\phi)$ for the slicing,
  of the stability stability condition $\sigma(t)$.
  Let $E \in \ka(t_0)$, we have to show, that $E \in \ka(t)$ for all $t\geq t_0$.
  We claim the following statements:
  \begin{enumerate}
  \item If $E \in \kp_t((0,1))$ for one $t \geq t_0$, then $E \in \kp_\tau([0,1])$ for all $ \tau \geq t_0$.
  \item If $E \in \kp_{t}(0)$ for one $t \geq t_0$, then  $E \in \kp_{\tau}(0)$ for all $\tau \geq t_0$.
  \item If $E \in \kp_{t}(1)$ for one $t \geq t_0$, then  $E \in \kp_{\tau}(1)$ for all $\tau \geq t_0$.
  \end{enumerate}
  Once we have shown the claim, we argue as follows. 
  Grouping Harder--Narasimhan factors in $\sigma(1)$ we get an exact triangle\footnote{
  We use the symbol $\xymatrix{A \ar@{.>}[r] & B}$ for a morphism $A \lra B[1]$.}
  \[ \xymatrix{ A \ar[r] &  E \ar[r] & B \ar@{.>}[r] & A } \] 
  with $A \in \kp_{t_0}(1)$ and $B \in \kp_{t_0}((0,1))$.
  Let now $t\geq {t_0}$. By $(1)$ we have $B \in \kp_t([0,1])$. 
  Taking Hader--Narasimhan filtration in $\sigma(t)$, yields exact triangles
  \[ \xymatrix{ 
    0=B_0 \ar[rr] & & B_1 \ar[rr]\ar[dl] & & B_2 \ar[rr]\ar[dl] & & B_3 = B\ar[dl]^{\alpha} \\
    & C_1 \ar@{.>}[ul] & & C_2 \ar@{.>}[ul] && C_3 \ar@{.>}[ul] 
  }\]
  with $C_1 \in \kp_t(1),C_2 \in \kp_t((0,1)),C_3 \in \kp_t(0)$. By (2) we have
  $C_3 \in \kp_{t_0}(0)$, but $B \in \kp_{t_0}((0,1))$, therefore $(\alpha:B \ra C_3)=0$ 
  which is only possible if $C_3=0$.
  This means that $B \in \kp_t((0,1])=\ka(t)$. 
  By (3) we also have $A \in \kp_t(1) \subset \ka(t)$, which implies $E \in \ka(t)$
  since $\ka(t)$ is extension closed.

  Ad 1: 
  Let $\omega \in Amp(X)$ be an ample class, then also $y + s \omega$ is
  ample for all $s>0$. For $s \geq 0, t \geq {t_0}$, 
  let $\sigma(t,s)=\sigma^*(x,t(y + s \omega)) \in \overline{V}(X)$ 
  (cf. Lemma \ref{sigma_extension_lemma}).
  Note that $\sigma(t,0)=\sigma(t)$.

  The property $E \in \kp_\sigma((0,1))$ is clearly open in $\sigma$.
  Hence we find $\eps>0$, with $E \in \kp_{t,s}((0,1))$ for all $\eps > s \geq 0$.
  If $s>0$, then $\sigma(t,s) \in V(X)$ and the heart $\ka(\sigma(t,s))$ is independent 
  of $t \geq {t_0}$ (cf. Remark \ref{hearts}). 
  Therefore, $E \in \ka(\sigma(\tau,s))$ for all $\tau \geq {t_0},s>0$.
  Taking the limit $s \ra 0$ we find $E \in \kp_\tau([0,1])$, for all $\tau \geq {t_0}$.

  Ad 3: Applying a shift we reduce this statement to (2).

  Ad 2: The property $E \in \kp_\tau(0)$ is clearly closed in $\tau$. It suffices to show openness.
  Fix $t\geq {t_0}$ with  $E \in \kp_t(0)$ and $1 > \eps > 0$.

  Let $T$ be the set of objects which occur as semi-stable factors of $E$ in
  a stability condition $\sigma(\tau), |t-\tau| \leq \eps$. Then the set $T$ has bounded mass 
  (cf.\ proof of \cite[Prop. 9.3.]{BridgelandK3}), and therefore the set of Mukai vectors 
  $S=\Set{v(A)}{ A \in T}$ is finite (cf. \cite[Lem.\ 9.3]{BridgelandK3}).

  Writing out formula for $Z_t(v)$ as in \cite[Sec.\ 6]{BridgelandK3}, we see that
  $Im(Z_t(v))$ vanishes  if and only if $Im(Z_\tau(v))$ vanishes for all $\tau \geq {t_0}$.
  It follows that $S$ decomposes as a disjoint union $S = S^0 \dunion S'$,
  where $Im(Z_\tau(v))=0$ (or $\neq 0$) for all $\tau \geq {t_0}$,  
  if $v \in S^0$ (or $v \in S'$ respectively).

  As the interval $[t-\eps,t+\eps]$ is compact and $S$ is finite, 
  there exists a $1 > \alpha > 0$ such that $|arg(Z_\tau(v))| > \alpha$ for 
  all $v \in S', |\tau - t| \leq \eps$.

  Making $\eps$ again smaller, we can assume that 
  \[ E \in \kp_{\tau}((-\alpha,\alpha)) \forall |t - \tau| \leq \eps. \tag{\#}\]
  It follows that all semi-stable factors $A$ of $E$ in stability condition 
  $\sigma(\tau)$ with $ |\tau - t| \leq \eps$, have the property that $v(A) \in S^0$.
  Moreover, as $A \in \kp_{\tau}((-\alpha,\alpha))$ and $arg(Z_\tau(A))\in
  \IZ$, we find $A \in \kp_\tau(0)$ and therefore $E \in \kp_\tau(0)$.
\end{proof}

\subsection{Limiting hearts}\label{sec:limiting-hearts}

\begin{defn}
  Let $\kc$ be a category. For a sequence of full subcategories $\ka(t) \subset \kc, t \gg 0$ 
  define the {\em limit}
  to be the full subcategory of $\kc$ with objects
  \[ \underset{t \ra \infty}{\lim} \ka(t)  =\Set{ E \in \kc }{E \in \ka(t) \forall t \gg 0 }. \]
\end{defn}

\begin{thm}\label{limiting_hearts}
  Let $[v] \in \KM(X)$ be a standard cusp, and $Y$ the associated K3 surface.
  Then, there exist a path $\sigma(t) \in Stab^\dagger(X), t \gg 0$ 
  and an equivalence $\Phi: \cd^b(Y) \sra \cd^b(X)$ such that 
  \begin{enumerate}
  \item $\limt \tilde{\pi}(\sigma(t)) = [v] \in \KM(X)$ and
  \item $\limt \ka(\sigma(t)) = \Phi( Coh(Y) ) $
  \end{enumerate}
  as subcategories of $\cd^b(X)$.
\end{thm}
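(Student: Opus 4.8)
The plan is to obtain $\sigma(t)$ as the push-forward of a large-volume path on $Y$ in which \emph{both} the $B$-field and the K\"ahler class are scaled, i.e.\ the path $\sigma_Y(t\beta,t\omega)$ suggested in the introduction. First I would fix $\omega\in Amp(Y)$ and $\beta\in NS(Y)_\IR$ with $\beta.\omega<0$ and set $\sigma_Y(t)=\sigma_Y(t\beta,t\omega)$. For $t\gg0$ we have $(t\omega)^2>2$, and since $t\omega$ is ample and, by the local finiteness of the walls (Lemma \ref{L_crit}), the point $exp(t\beta+it\omega)$ avoids them for $t\gg0$, it lies in $\kl(Y)$; hence $\sigma_Y(t)\in V(Y)\subset Stab^\dagger(Y)$ by Theorem \ref{Existence}. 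Next, by Theorem \ref{reduction_to_lvl} together with Remark \ref{fm_criterion}, there is a derived equivalence $\cd^b(X)\to\cd^b(Y)$ respecting the distinguished components and carrying a standard vector $w\in\Gamma_X\cdot v$ to $v_0=(0,0,1)\in N(Y)$; let $\Phi:\cd^b(Y)\sra\cd^b(X)$ be its inverse and put $\sigma(t)=\Phi_*\sigma_Y(t)$, so that $\sigma(t)\in Stab^\dagger(X)$ and $\ka(\sigma(t))=\Phi(\ka_Y(t\beta,t\omega))$.

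The core of the argument is the identity $\limt\ka_Y(t\beta,t\omega)=Coh(Y)$. Writing $(\kt,\kf)$ for the torsion pair defining $\ka_Y(t\beta,t\omega)$, I would use $\mu_{t\omega}(A)=t\,\mu_\omega(A)$ and the threshold $(t\beta).(t\omega)=t^2(\beta.\omega)$ to rewrite the defining inequalities, after dividing by $t>0$, as $\mu_\omega^{\min}(A/A_{tors})>t(\beta.\omega)$ for $A\in\kt$ and $\mu_\omega^{\max}(A)\le t(\beta.\omega)$ for $A\in\kf$. Since $\beta.\omega<0$ the threshold $t(\beta.\omega)$ tends to $-\infty$, so every coherent sheaf eventually lies in $\kt$ while every nonzero torsion-free sheaf eventually leaves $\kf$. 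Feeding this into the description $\ka_Y(t\beta,t\omega)=\{E:\, H^0(E)\in\kt,\ H^{-1}(E)\in\kf,\ H^i(E)=0 \text{ for } i\neq0,-1\}$ gives both inclusions: any sheaf $E\in Coh(Y)$ has $H^{-1}(E)=0\in\kf$ and $H^0(E)=E\in\kt$ for $t\gg0$; conversely, if $E\in\ka_Y(t\beta,t\omega)$ for all $t\gg0$, then $H^{-1}(E)\in\kf$ for arbitrarily large $t$ forces $\mu_\omega^{\max}(H^{-1}(E))\le t(\beta.\omega)\to-\infty$, whence $H^{-1}(E)=0$ and $E$ is a sheaf. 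As $\Phi$ is an equivalence and the limit is taken objectwise, this yields $\limt\ka(\sigma(t))=\Phi(Coh(Y))$, which is assertion (2).

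It remains to verify (1). The central charge of $\sigma_Y(t)$ is $exp_{v_0}(t\beta+it\omega)\in\pd^+(Y)$, which is \emph{not} a linear degeneration in the sense of the definition because its real part also grows; nevertheless it converges to the large-volume cusp $[v_0]\in\KM(Y)$. To see this I would use Looijenga's neighborhood basis $U(K,v_0)$ from the proof of Proposition \ref{lin_deg_prop}: the semi-group $L(v_0)_\IR\times C^+(L(v_0))$ absorbs the growing real part $t\beta$ by a translation in $L(v_0)_\IR$ and the growing imaginary part $t\omega$ by a translation in the cone $C^+(L(v_0))$, so $exp_{v_0}(t\beta+it\omega)$ eventually lies in every $U(K,v_0)$. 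Transporting by $\Phi$, which maps $[v_0]$ to $[w]=[v]$, gives $\limt\tilde\pi(\sigma(t))=[v]$. I expect the main obstacle to be the heart limit of the second paragraph---specifically the bookkeeping forcing $H^{-1}(E)=0$ and the verification that the doubly-scaled path really stays inside $V(Y)$ for all large $t$; once the Looijenga basis is available, the convergence in (1) is comparatively soft.
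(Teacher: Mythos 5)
Your proposal is correct, and its skeleton is the same as the paper's: reduce to the large--volume cusp via Theorem \ref{reduction_to_lvl} together with Remark \ref{fm_criterion}, run an explicit Bridgeland path of large--volume type, deduce (1) from Looijenga's neighborhoods $U(K,v_0)$ as in Proposition \ref{lin_deg_prop}, and read off the limiting heart from the torsion pair. The genuine difference is your choice of path, and it is the better one. The paper takes $\sigma_X(t\omega,t\omega)$, so the rescaled slope threshold is $t\omega^2\to+\infty$, and it concludes $\limt \ka_X(t\omega,t\omega)=Coh(X)[1]$. But torsion sheaves lie in $\kt(t)$ for \emph{every} $t$ (by the very definition of $\kt$), and a sheaf with nonzero torsion never lies in $\kf(t)$; so the actual limit of the paper's path is the tilt $\Set{E}{H^{0}(E) \text{ torsion},\; H^{-1}(E) \text{ torsion free}}$, which is not $Coh(X)[1]$ as a subcategory --- for instance $\ko_x$ belongs to it. (Indeed, skyscrapers lie in the heart of every stability condition in $V(X)$, so \emph{no} path inside $V(X)$ can have limiting heart $Coh(X)[1]$.) Your sign choice $\beta.\omega<0$ sends the threshold $t(\beta.\omega)$ to $-\infty$ instead: every sheaf, torsion or not, ends up on the $\kt$-side, while $\kf(t)$ shrinks to zero, so the limit is exactly $Coh(Y)$, with $\Phi$ the reduction equivalence itself and no shift. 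In effect your variant repairs the one defective step in the paper's own argument, at the cost of nothing.

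Two small corrections to your write-up. First, Lemma \ref{L_crit} is not a local-finiteness statement, and local finiteness alone would not force a divergent path to avoid infinitely many walls; what the lemma actually says is the inclusion $\pd_{>2}(N,v)\subset\pd_{A}(N,v)$, and this \emph{is} exactly what you need: once $(t\omega)^2>2$, the point $exp(t\beta+it\omega)$ avoids all $A$-type walls, and together with ampleness of $t\omega$ this places it in $\kl(Y)$ in the description of the Remark after Definition \ref{plus_defn}, so $\sigma_Y(t\beta,t\omega)\in V(Y)$ by Theorem \ref{Existence}. Second, in (1) the absorption argument should be made explicit: for a base point $x_0+iy_0$ of $K$ one writes $t\beta+it\omega=(x_0+iy_0)+(t\beta-x_0)+i(t\omega-y_0)$ and needs $t\omega-y_0\in C^+(L(v_0))$, which holds for $t\gg0$ since $(t\omega-y_0)^2>0$ and $(t\omega-y_0).\omega>0$ eventually; this is routine but is the point where both growing parts are genuinely absorbed by Looijenga's semi-group.
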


\begin{proof}
  By Theorem \ref{reduction_to_lvl} there is a derived equivalence
  $\Phi: \cd^b(X) \ra \cd^b(Y)$ mapping $[v]$ to $[v_0]$ and 
  $\sigma(t)$ into the distinguished component $Stab^\dagger(Y)$.
  Hence we may assume, without loss of generality, that $[v]=[v_0]$ and $X=Y$.

  Let $\omega \in Amp(X)$ and consider the 
  sequence $\sigma_{X}(\omega t, \omega t) \in Stab^\dagger(X)$.

  As in the proof of Proposition \ref{lin_deg_prop}, we see that
  $\tilde{\pi}(\sigma(t))=[exp(t \beta + i t \omega)]$ converges to
  $[v_0]$. Indeed, if the vector $\beta + i t \omega$ lies in a
  principal open $U(K,v_0)$, then also $t \beta + i t \omega \in
  U(K,v_0)$, since $U(K,v)$ is invariant under the additive action of
  $L(v_0)_\IR$ on $T(N,v_0)$.

  The heart $\ka_{X}(t \omega, t \omega)$ consists of objects $E \in \cd^b(X)$
  with $H^0(E) \in \kt(t)$, $H^{-1}(E) \in \kf(t)$ and $H^i(E)=0$ for all $i \notin \{0,-1\}$,
  where
  \begin{align*}
    \kt(t)&=\Set{ A \in Coh(X)}{A\text{ torsion or } \mu_{\omega}^{min}(A/A_{tors}) > t \omega^2 }\\
    \kf(t)&=\Set{ A \in Coh(X)}{A\text{ torsion free and } 
      \mu_{\omega}^{max}(A) \leq t \omega^2 }.
  \end{align*}
  As $\omega^2 >0$ every sheaf $A$ lies in $\kf(t)$ for $t$ sufficiently large.
  Similarly no sheaf $A$ lies in $\kt(t)$ for all $t \gg 0$.
  Thus we find  $\underset{t \ra \infty}{\lim} \ka_{X}(\omega t, \omega t) \isom Coh(X)[1].$
\end{proof}

%%% Local Variables: 
%%% mode: latex
%%% TeX-master: "Cusps"
%%% End: 

\subsection{Metric aspects}

In this section we will define a natural Riemannian metric on the period domain
$\pd(N)$ and show that linear degenerations are geodesics.

Let $N$ be a lattice of signature $(2,\rho)$.  The natural action of
the real Lie group $G=O(N_\IR)$ on $\IP(N_\IC)$ induces a transitive
action of $G$ on $\pd(N)$.  Let $[z] \in \pd(N)$ be a point and let $P
\subset N_\IR$ be the positive definite subspace spanned by $Re(z)$
and $Im(z)$. The stabilizer of $[z] \in \pd(N)$ is the compact subgroup
\[ K_{P} = \Set{ g \in G }{ g \cdot [z] = [z] } \isom SO(P) \times
O(P^\perp). \] Let $\fk_{P} \subset \fg$ be the Lie algebra of $K_P
\subset G$.  We can identify the tangent space $T_{[z]}\pd(N)$ with
the quotient $ \fg / \fk_{P}. $

As $G$ is semi-simple, the Killing form $B$ on $\fg$ is non-degenerate.
Let $\fm_P=\fk_P^\perp$ be the orthogonal complement of $\fk_P$ with
respect to $B$.  More explicitly, by
\cite[III.B.ii]{Helgason} we have $ B(X,Y) = \rho \cdot Tr(X \circ Y)$ and
\[ \fm_P = \Set{ X \in \fg }{ X(P) \subset P^\perp, \; X(P^\perp)
  \subset P }. \] We get a Cartan decomposition $\fg = \fk_P \vsum
\fm_P$.  The restriction of $B$ to $\fm_P$ is positive definite and
induces an invariant Riemannian metric on $\pd(N)$ via the canonical
isomorphism $ \fm_P \isom T_{[z]} \pd(N)$
(cf. \cite[III.7.7.4]{Helgason}).

Now \cite[IV.3, Thm.\ 3.3.iii]{Helgason} shows, that the geodesics 
of $\pd(N)$ through $[z]$ are given by the images $exp(t X) \cdot [z]$
of the one-parameter sub-groups $\{ exp(t X) \,|\, t\in \IR \} \subset G$ with $X \in \fm_P$.
We will construct a special $X \in \fm_P$ such that $exp(t X) \cdot [z]$
is a linear degeneration through $[z]$.

Let $v_0 \in N$ be a standard vector and let $x+iy \in T(N,v_0)$ with
$[z] = exp_{v_0}(x+iy)$.  Recall that $x \in N_\IR / v_0 \IR$ with
$x.v_0=-1$. There is a unique lift $x_0$ of $x$ to $N_\IR$ such that
$(x_0)^2=0$.  Indeed, if $\tilde{x}$ is any lift, then
$x_0=\tilde{x}-\half (\tilde{x}^2) v_0$ 
has the required property. 

Set $x_1=-x_0$ and let $U \subset N_\IR$ be the hyperbolic plane
spanned by $(v_0,x_1)$. 
We get a one-dimensional Lie sub-algebra
\[ \fa(v_0,x) = \mathfrak{so}(U) \subset \fg \]
which depends on the choice of $v_0$ and $x$.

\begin{lem}
  % Let $[z]=exp_{v_0}(x+iy)$ for some $x+iy \in T(N,v_0)$ and let $P \subset N$ be the 
  % plane spanned by $Re(z),Im(z)$.\\
  % Then, 
  The Lie algebra $\fa(v_0,x)$ is contained in $\fm_P.$
\end{lem}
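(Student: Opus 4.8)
The plan is to exhibit the generator of the one-dimensional Lie algebra $\fa(v_0,x)=\mathfrak{so}(U)$ explicitly and then verify directly the two defining conditions of $\fm_P$. Since $U=\IR\langle v_0,x_1\rangle$ is a hyperbolic plane with $v_0.x_1=1$, the algebra $\mathfrak{so}(U)$ is spanned by the single skew-symmetric endomorphism $X$ acting on $U$ by $X v_0 = v_0$ and $X x_1 = -x_1$ (equivalently $X x_0 = -x_0$), and by zero on $U^\perp$. One checks at once that $X\in\fg=\mathfrak{o}(N_\IR)$: it is skew on $U$, vanishes on $U^\perp$, and maps $U$ into $U$, so the mixed pairings vanish as well. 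Since lying in $\fm_P$ is a linear condition, it is enough to treat this one generator. By the description $\fm_P=\{X\in\fg \mid X(P)\subset P^\perp,\ X(P^\perp)\subset P\}$ recalled just above, the task reduces to proving these two inclusions for $X$.

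The first step is to make $Re(z)$ and $Im(z)$ explicit via the tube model of Lemma \ref{tube_model}. Writing $x_0$ for the isotropic lift of $x$ and choosing the unique lift $\tilde{y}\in U^\perp$ of $y$ (so that $\tilde{y}.v_0=\tilde{y}.x_0=0$), the section $Q(v_0)$ forces
\[ z = Exp_{v_0}(x+i\tilde{y}) = x_0 + i\tilde{y} - \half\, \tilde{y}^2\, v_0, \qquad
   Re(z)=x_0-\half\, \tilde{y}^2\, v_0, \qquad Im(z)=\tilde{y}; \]
that $z$, hence $P$, is independent of the chosen lift of $y$ is a short check. Applying $X$ gives $X(Re(z))=-x_0-\half\,\tilde{y}^2\,v_0$ and $X(Im(z))=0$. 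A direct computation of the pairings of $X(Re(z))$ with $Re(z)$ and with $Im(z)$, using only $v_0^2=x_0^2=0$, $v_0.x_0=-1$ and $\tilde{y}\in U^\perp$, shows that both vanish. Together with $X(Im(z))=0$ this establishes $X(P)\subset P^\perp$.

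For the second inclusion I would take $w\in P^\perp$ and decompose it along $N_\IR=U\vsum U^\perp$ as $w=\alpha v_0+\beta x_0+u$ with $u\in U^\perp$. The conditions $w.Re(z)=0$ and $w.Im(z)=0$ translate into $\alpha=\half\,\tilde{y}^2\beta$ and $u.\tilde{y}=0$; substituting into $Xw=\alpha v_0-\beta x_0$ then yields $Xw=-\beta\,Re(z)\in P$, so $X(P^\perp)\subset P$ and therefore $X\in\fm_P$, which is the claim. The one genuine subtlety to keep in mind is that for a skew-symmetric $X$ the condition $X(P)\subset P^\perp$ does \emph{not} by itself force $X(P^\perp)\subset P$ (it only makes the $P^\perp$-block of $X$ skew-symmetric), so both inclusions must be verified separately; everything else is an elementary manipulation once the explicit form of $Re(z)$ and $Im(z)$ is in hand.
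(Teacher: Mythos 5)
Your proof is correct and takes essentially the same approach as the paper's: both exhibit the generator of $\fa(v_0,x)=\mathfrak{so}(U)$ explicitly, compute $P$ from $Exp_{v_0}(x+iy)=x_0+iy-\half y^2 v_0$ in the decomposition $N_\IR=\langle v_0\rangle\vsum\langle x_1\rangle\vsum U^\perp$, and verify the two inclusions $X(P)\subset P^\perp$ and $X(P^\perp)\subset P$ by direct pairing computations. Incidentally, your description of $P^\perp$ is the sign-correct one: in the paper's basis the vectors of $P^\perp$ are $(-\half b y^2,b,c)^{tr}$ with $c.y=0$ rather than $(\half b y^2,b,c)^{tr}$, and with that correction the paper's concluding formula $A_\lambda(\gamma)=\lambda b\,\alpha\in P$ holds verbatim.
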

\begin{proof}
Let $R = U^\perp$ and decompose $N_\IR$ as a direct sum $N_\IR=\<v_0\> \vsum \<x_1\> \vsum R$.
We write elements of $N_\IR$ as column vectors $(a,b,c)^{tr}=a v_0 + b v_1
+ c$ with $a,b \in \IR$ and $c \in R$.
We have
\[ Exp_{v_0}(x+iy) = (-\half y^2,-1,iy)^{tr}. \]
The two-plane $P$ is spanned by the vectors
$(-\half y^2,-1,0)^{tr}$ and $(0,0,y)^{tr}$.
The Lie algebra $\fa(v_0,a)$ consists of all matrices
\[ A_\lambda = \begin{pmatrix}
  \lambda & 0 & 0 \\
    0 & -\lambda & 0 \\
    0 & 0 & 0
  \end{pmatrix}, \quad \lambda \in \IR.  \]
One checks easily, that $A_\lambda(P) \perp P$.
The orthogonal complement of $P$ consists of all vectors
$\gamma=(\half b y^2 ,b,c)^{tr}$ with $c.y=0$. Therefore,
\[ A_\lambda(\gamma)=(\lambda \half b y^2, - \lambda b ,
0)^{tr}=\lambda b \alpha \in P. \]
This shows that $\fa(v_0,x) \subset \fm_P$.
\end{proof}

\begin{lem}
  Let $[z] = exp_{v_0}(x+iy) \in \pd(N)$ and $A_\lambda \in \fa(v_0,x)$, then the action of
  $exp(A_\lambda)$ is given by 
  \[ exp(A_\lambda) \cdot [z] = exp_{v_0}( x + i t y ), \]
  where $t = exp(\lambda)$.
\end{lem}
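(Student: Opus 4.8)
The plan is to prove the identity by a direct computation in the basis adapted to the hyperbolic plane $U=\langle v_0,x_1\rangle$ and its orthogonal complement $R$ that was set up in the previous lemma. The conceptual content is that $exp(A_\lambda)$ acts as a hyperbolic rotation of $U$ which fixes $R$ pointwise, and that, after passing to the projective class, this scaling has precisely the effect of replacing the imaginary part $y$ by $ty$ with $t=exp(\lambda)$.

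First I would exponentiate the Lie-algebra element. Since $A_\lambda$ is diagonal in the decomposition $N_\IR=\langle v_0\rangle\oplus\langle x_1\rangle\oplus R$, with eigenvalues $\lambda,-\lambda,0$, its exponential $exp(A_\lambda)\in G$ is again diagonal with eigenvalues $e^\lambda,e^{-\lambda},1$; that is, $exp(A_\lambda)v_0=e^\lambda v_0$, $exp(A_\lambda)x_1=e^{-\lambda}x_1$, and $exp(A_\lambda)|_R=\id$. Extending the action $\IC$-linearly to $N_\IC$, I would then apply it to the representative supplied by the previous lemma, namely $z=Exp_{v_0}(x+iy)=(-\half y^2,-1,iy)^{tr}=-\half y^2\,v_0-x_1+iy$, using that $y\in R$ is fixed. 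This yields $exp(A_\lambda)z=-\half y^2 e^\lambda\,v_0-e^{-\lambda}x_1+iy$.

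The second step is to recognize this vector projectively. Multiplying $exp(A_\lambda)z$ by the nonzero scalar $e^\lambda=t$, which does not change the class in $\IP(N_\IC)$, gives $-\half y^2 t^2\,v_0-x_1+i\,t\,y$, which is exactly $Exp_{v_0}(x+ity)$. Hence $[exp(A_\lambda)z]=exp_{v_0}(x+ity)$, as claimed.

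The only point requiring care, and the one I expect to be the main (though mild) obstacle, is the bookkeeping of the $v_0$-coordinate. The isotropy condition $z^2=0$ forces the $v_0$-coordinate of $Exp_{v_0}(x+ity)$ to be $-\half(ty)^2=-\half t^2 y^2$, and one must check that the eigenvalue $e^\lambda$ on $v_0$, combined with the overall rescaling by $t=e^\lambda$, produces exactly this factor $t^2$, while the same rescaling simultaneously turns $iy$ into $ity$ and restores the $x_1$-coefficient to $-1$. Because we work in projective space the global scalar is irrelevant to the final statement, but it must be tracked in order to land on the distinguished representative lying in $Q(v_0)$; this tracking is the substance of the verification and is entirely routine once the diagonal form of $exp(A_\lambda)$ is in hand.
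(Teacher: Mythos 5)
Your proof is correct and takes essentially the same route as the paper: both diagonalize $exp(A_\lambda)$ on $\langle v_0\rangle \oplus \langle x_1\rangle \oplus R$ with eigenvalues $t, t^{-1}, 1$, apply it to the distinguished representative, and rescale by the positive scalar $t$ to recognize the result as $Exp_{v_0}(x+ity)$. The only cosmetic difference is that you act $\IC$-linearly on the vector $z \in \IP(N_\IC)$, whereas the paper acts on the real two-plane $P$ under the Grassmannian model $\pd(N)\isom Gr^{po}_2(N_\IR)$; the bookkeeping is identical.
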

\begin{proof}
  As above we write elements of $N_\IR$ as column vectors with
  respect to the  decomposition $N_\IR=\<v_0\> \vsum \<x_1\> \vsum
  R$. Similarly, endomorphisms are represented by matrices. We have  
  \[ exp(A_\lambda) = \begin{pmatrix}
    t & 0 & 0 \\
    0 & t^{-1} & 0 \\
    0 & 0 & id_R
  \end{pmatrix},   \]
  where $t = exp(\lambda)$. Therefore,
  \begin{align*}
    exp(A_\lambda) \cdot P &= \< ( - t \half y^2,-t^{-1},0)^{tr}, ( 0 , 0 ,y)^{tr}\> \\
    &= \< ( -  \half (t y)^2, -1 ,0)^{tr}, (0,0,t y)^{tr}\>,
  \end{align*}
  which is the two-plane spanned by the real- and imaginary parts of
  the vector $Exp_{v_0}(x + ity)$.
\end{proof}

\begin{cor}\label{cor:geodesics}
  For all $x + iy \in T(N,v_0)$, the path \[ \alpha(t) = exp_{v_0}(x + i\, exp(t)\, y)
  \in \pd(N)\] is a  geodesic of constant speed.
\end{cor}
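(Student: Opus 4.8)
The plan is to recognize $\alpha(t)$ as the orbit of a one-parameter subgroup of $G$ and then apply the geodesic characterization already quoted from \cite[IV.3, Thm.\ 3.3.iii]{Helgason}. First I would record that the Lie algebra element $A_\lambda$ constructed above is linear in $\lambda$: setting $X = A_1 \in \fa(v_0,x)$ we have $A_\lambda = \lambda X$, so that $\{ exp(A_\lambda) \}_{\lambda \in \IR}$ is exactly the one-parameter subgroup $\{ exp(\lambda X) \}_{\lambda \in \IR}$ generated by $X$. Writing $[z] = exp_{v_0}(x + iy)$ and replacing $\lambda$ by $t$ in the second lemma above (whose output parameter is $exp(\lambda)$), I obtain
\[ exp(t X) \cdot [z] = exp_{v_0}( x + i\, exp(t)\, y ) = \alpha(t). \]

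By the first lemma of this subsection we have $X \in \fa(v_0,x) \subset \fm_P$, so that $\alpha(t) = exp(tX)\cdot[z]$ has exactly the form that \cite[IV.3, Thm.\ 3.3.iii]{Helgason} identifies as a geodesic through $[z]$. This establishes that $\alpha(t)$ is a geodesic.

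For the constant-speed assertion I would invoke the $G$-invariance of the metric, which holds by construction since it descends from the Killing form $B$ via the Cartan decomposition $\fg = \fk_P \vsum \fm_P$. The initial velocity $\dot\alpha(0) \in T_{[z]}\pd(N)$ corresponds to $X$ under the isomorphism $\fm_P \isom T_{[z]}\pd(N)$, and for each $s$ the isometry $exp(sX)$ carries $[z]$ to $\alpha(s)$ and $\dot\alpha(0)$ to $\dot\alpha(s)$. Hence $\| \dot\alpha(s) \| = \| \dot\alpha(0) \| = \| X \|_B$ is independent of $s$, which is the constant-speed claim. (Alternatively, this is automatic, as in a Riemannian symmetric space the orbit of a one-parameter isometry group with $\fm$-tangent is parametrized proportionally to arc length.)

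I do not expect a genuine obstacle here: the corollary is essentially the assembly of the two preceding lemmas. The one point worth flagging --- and the reason the statement scales the imaginary part by $exp(t)$ rather than by $t$ --- is that the affine, constant-speed parameter on this geodesic corresponds in tube coordinates to the exponential dilation $y \mapsto exp(t)\, y$ supplied by the second lemma, not to a linear one. This is exactly what reconciles the present corollary with the earlier statement that a linear degeneration $x + i\, t\, y$ is a geodesic: it is a geodesic only after this reparametrization.
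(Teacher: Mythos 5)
Your proof is correct and follows essentially the same route as the paper: the corollary is exactly the assembly of the two preceding lemmas (identifying $\alpha(t)$ as the orbit $exp(tA_1)\cdot[z]$ of a one-parameter subgroup with generator in $\fm_P$) together with the quoted theorem of Helgason, and your explicit isometry argument for constant speed just makes precise what the paper leaves implicit in that theorem. Your closing remark about the exponential reparametrization is also exactly the point the paper makes after Corollary \ref{linear_geodesics}, where it notes that the parametrization $exp(x+ity)$ is not of constant speed.
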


If $\Gamma \subset G$ is a discrete subgroup acting properly and
discontinuously on $\pd(N)$ then the quotient $\Gamma \setminus \pd(N)$
inherits a Riemannian metric on the smooth part $(\Gamma \setminus \pd(N))_{reg}$.
Geodesics in $(\Gamma \setminus \pd(N))_{reg}$ are locally the images
of geodesics on $\pd(N)$. More generally we define geodesics in
$\Gamma \setminus \pd(N)$ to be the images of geodesics in $\pd(N)$.

This discussion applies in particular to the K\"ahler moduli space
of a K3 surface $X$. From the definition of linear degeneration and
Corollary \ref{cor:geodesics} we get immediately the following statement.

\begin{cor}\label{linear_geodesics}
  Linear degenerations are geodesics in the K\"ahler moduli space $KM(X)$.
\end{cor}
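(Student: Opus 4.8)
The plan is to deduce the statement directly from Corollary \ref{cor:geodesics} together with the invariance of the Riemannian metric on $\pd(N)$ under the isometry group. Let $\gamma(t) \in KM(X)$ be a linear degeneration to a standard cusp $[v]$. By definition there is a lift $\alpha(t)$ of $\gamma(t)$ to $\pd^+(X)$ of the form
\[ \alpha(t) = exp_w(x_0 + i\, t\, y_0), \]
with $w \in \Gamma_X \cdot v$, $x_0 \in A(w)_\IR$ and $y_0 \in C(L(w))$. Since $w$ lies in the $\Gamma_X$-orbit of the standard vector $v$ and $\Gamma_X$ acts by isometries of $N$, the vector $w$ satisfies $w.w=0$ and $div(w)=1$, i.e.\ it is again standard. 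In particular Corollary \ref{cor:geodesics} applies verbatim with $w$ in place of the standard vector $v_0$ used there.

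First I would pass to a logarithmic reparametrization. Setting $t = exp(s)$ we obtain
\[ \alpha(exp(s)) = exp_w(x_0 + i\, exp(s)\, y_0), \]
which is precisely the path appearing in Corollary \ref{cor:geodesics}, and is therefore a constant-speed geodesic of $\pd^+(X)$. Because $s \mapsto exp(s)$ is a smooth, strictly monotone change of parameter, the image of $\alpha$ coincides with the image of this geodesic; hence $\alpha$ traces out a geodesic of $\pd^+(X)$ (as an unparametrized curve).

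It then remains to descend to the quotient. By the discussion preceding the statement, a curve in $KM(X) = \Gamma_X^+ \setminus \pd^+(X)$ is \emph{by definition} a geodesic exactly when it is the image of a geodesic in $\pd^+(X)$ under the quotient map; this is the convention adopted precisely to avoid difficulties at the orbifold singularities. Since $\gamma(t)$ is the image of $\alpha(t)$ and $\alpha$ is a geodesic, the curve $\gamma$ is a geodesic in $KM(X)$, as claimed.

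The only point requiring care is the interplay between \emph{constant-speed} geodesics, which is what the Helgason one-parameter subgroup description underlying Corollary \ref{cor:geodesics} produces, and geodesics viewed as unparametrized curves, which is the notion implicit in the definition of geodesics on the quotient. The logarithmic reparametrization reconciles the two: the linear scaling $t \mapsto t\, y_0$ of the imaginary part is converted into the exponential scaling required by Corollary \ref{cor:geodesics}, at the cost only of a reparametrization that leaves the image unchanged. No further analytic input is needed, so I expect no substantial obstacle beyond this bookkeeping.
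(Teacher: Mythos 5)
Your proof is correct and follows essentially the same route as the paper: the paper also deduces the corollary immediately from Corollary \ref{cor:geodesics} applied to the lift $exp_w(x_0+i\,t\,y_0)$, together with the definition of geodesics in the quotient as images of geodesics in $\pd^+(X)$. Your careful handling of the logarithmic reparametrization matches the paper's own remark that the parametrization $exp(x+ity)$ is not of constant speed.
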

Note however, that our parametrization $exp(x+ity)$ is not of constant
speed.

We conjecture the following converse to the above corollary.
\begin{conj}\label{geodesic_conjecture}
  Let $[v] \in \KM(X)$ be a zero-dimensional cusp. Then every geodesic
  converging to $[v]$ is a linear degeneration.
\end{conj}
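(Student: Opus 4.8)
The plan is to recast the statement inside the Riemannian geometry of $\pd(N)$ set up in the preceding subsection and to reduce it to the asymptotic analysis of geodesics in the symmetric space $G/K$, with $G=O(N_\IR)\isom O(2,\rho)$ and $K\isom SO(2)\times O(\rho)$. After replacing $v$ by a $\Gamma_X$-translate I may assume $v=v_0$, so that the tube model $exp_{v_0}\colon T(N,v_0)=A(v_0)_\IR\times C(L(v_0))\to\pd(N)$ supplies horospherical coordinates $x+iy$ adapted to the cusp. The isotropic line $\IZ v_0$ determines the maximal $\IQ$-parabolic $P=P_{v_0}$ stabilising it; its unipotent radical is the group of translations of $A(v_0)_\IR$, and its Levi acts on $L(v_0)=v_0^\perp/\IZ v_0$, of signature $(1,\rho-1)$, as $O(1,\rho-1)$. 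The structural point I would lean on is that the symmetric space of this Levi is the projectivised positive cone $C(L(v_0))/\IR_{>0}\isom\IH^{\rho-1}$, which the Baily--Borel construction collapses onto the single point $[v_0]$, whereas the honest escape towards the cusp is the scaling $y^2\to\infty$.

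In these coordinates I would first record the two relevant families. By the computation preceding Corollary \ref{cor:geodesics} the linear degenerations are exactly the geodesics $t\mapsto exp_{v_0}(x+i\,exp(t)\,y)$ generated by $A_\lambda\in\fa(v_0,x)=\mathfrak{so}(U)$, $U=\<v_0,x_1\>$; such a geodesic fixes the real part $x$ and the projective direction $[y]\in\IH^{\rho-1}$ and merely rescales $y$. On the other hand an arbitrary geodesic converging to the boundary is $\gamma(t)=exp(tX)\cdot[z]$ with $X\in\fm_P$, and I would decompose the tangent vector $X$ into its split part along $\fa(v_0,x)$, its part tangent to the Levi symmetric space $\IH^{\rho-1}$, and its part along the unipotent translations of $A(v_0)_\IR$. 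In these terms the linear degenerations are precisely the geodesics whose Levi and unipotent components vanish, i.e. the trajectories of the geodesic action of the split torus of $P$.

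The cleanest positive result, and the one I would prove first, is the Borel--Serre version. Replacing the Baily--Borel compactification by the Borel--Serre compactification blows up $[v_0]$ into a boundary face $e(P)$ fibred over the Levi symmetric space $\Gamma_{v_0}\setminus\IH^{\rho-1}$ together with the nilmanifold of $A(v_0)_\IR$. The asymptotic analysis of geodesics in \cite{JiMacPherson} then characterises the geodesics of $\Gamma_X\setminus\pd(N)$ that converge to a point of $e(P)$ as those asymptotic to a trajectory of the geodesic action of the split torus of $P$; by the previous paragraph these are exactly the linear degenerations. This gives the conjecture verbatim for the Borel--Serre compactification, as is indicated in the introduction.

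The main obstacle is the passage back from Borel--Serre to the Baily--Borel compactification used to define $\KM(X)$. The collapse $e(P)\to[v_0]$ identifies the whole fibre over $\Gamma_{v_0}\setminus\IH^{\rho-1}$ with the single cusp point, so a geodesic whose Levi projection to $\Gamma_{v_0}\setminus\IH^{\rho-1}$ stays in a compact set — for instance a recurrent or closed geodesic of that hyperbolic quotient — will still have $y(t)^2\to\infty$ and hence converge to $[v_0]$ in $\KM(X)$, even though its Levi component of $X$ is nonzero and it is not a linear degeneration. Ruling out such bounded Levi drift, or showing that it cannot coexist with convergence to the cusp, is precisely the gap; this is why I expect a complete proof of the stated (Baily--Borel) conjecture to require genuinely new input beyond \cite{JiMacPherson}. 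When $\rho=1$ the difficulty disappears entirely: $\IH^{\rho-1}$ is a single point, no Levi drift is possible, and the statement reduces to the elementary fact that the only geodesics of the upper half plane converging to the cusp are the vertical lines $exp_{v_0}(x+i\,t\,y)$ — which explains why the conjecture is unconditional in the Picard-rank-one case.
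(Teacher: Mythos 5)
The statement you were asked to prove is a conjecture: the paper contains no proof of it, only the evidence assembled after Corollary \ref{cor:geodesics}, namely (i) the Picard rank one case, where $\pd^+(N)$ is the upper half plane and the only geodesics along which $y\to\infty$ are the vertical lines, and (ii) a heuristic that the analogous statement for the (reductive) Borel--Serre compactification follows from the Ji--MacPherson classification of EDM geodesics, together with the closing suggestion that the Baily--Borel case ``should'' follow by studying the fibers of $\KM(X)^{BS}\ra \KM(X)$. Your proposal retraces exactly this route: the same horospherical coordinates adapted to the parabolic $G_{[v]}$, the same identification of linear degenerations with the trajectories having constant unipotent and Levi components and $A$-component $exp(tH)$, the same appeal to \cite{JiMacPherson} for the Borel--Serre statement (note that ``converging to $e(P)$ implies EDM'' is asserted, not proved, both by you and by the paper), and the same treatment of $\rho=1$. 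As a reconstruction of what the paper actually establishes, it is faithful.

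Where you depart from the paper is your last paragraph, and there your instinct is correct -- in fact stronger than you state. The bounded-Levi-drift scenario is not merely a gap that new input might close; it is realized by explicit geodesics, so the Baily--Borel conjecture fails for $\rho\geq 2$. Take $w=v$, two isotropic vectors $e_1,e_2\in L(v)_\IR$ with $e_1.e_2>0$, and constants $a,b>0$, $a\neq b$, and consider
\[ \gamma(t) \;=\; exp_v\bigl(i\,(e^{at}e_1+e^{bt}e_2)\bigr). \]
This curve lies in the maximal flat which is the orbit of the two-dimensional split torus generated by $\fa(v,0)$ and $\mathfrak{so}(\IR\<e_1,e_2\>)$, hence is a constant-speed geodesic for the metric of the paper. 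Since both exponents are positive, $y(t)=e^{at}e_1+e^{bt}e_2$ lies in $y_*+C^+(L(v))$ for every fixed $y_*$ once $t\gg 0$; by Looijenga's description of the neighborhoods $U(K,v)$ -- the very description used in the proof of Proposition \ref{lin_deg_prop} -- this forces $\gamma(t)\ra [v]$ in $\KM(X)$. Yet $\gamma$ is not a linear degeneration: a lift of any linear degeneration is $Exp_w(x_0+ity_0)$, a quadratic polynomial in $t$, whose image has projective span of dimension at most $2$ in $\IP(N_\IC)$, whereas the lift $-x_1+i(e^{at}e_1+e^{bt}e_2)+(e_1.e_2)e^{(a+b)t}v$ involves the four independent exponentials $1,e^{at},e^{bt},e^{(a+b)t}$ and spans a $3$-dimensional projective subspace; since $\Gamma_X$ acts projectively linearly and reparametrization does not change the span, no choice of lift, of representative $w\in\Gamma_X\cdot v$, or of parametrization can repair this. (When the Levi lattice is cocompact, e.g.\ $\rho=2$ with $NS(X)$ anisotropic, even $a+b>0$ with $a\neq b$ suffices, which is exactly your ``recurrent Levi drift'' picture.) So your conclusion should be sharpened: the passage from Borel--Serre to Baily--Borel that the paper hopes to carry out cannot work, and Conjecture \ref{geodesic_conjecture} is false as stated for $\rho\geq 2$; it becomes plausible only if one either restricts to EDM geodesics or replaces the Baily--Borel compactification by the Borel--Serre one.
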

We have the following evidence. The conjecture holds true in the case
$X$ has Picard rank one. Then, $\pd(N)^+$ is isomorphic to the upper
half plane and the geodesics converging to the cusp $i \infty$ are
precisely the vertical lines, which are our linear degenerations.

If one uses the reductive Borel--Serre compactification $\KM(X)^{BS}$ to compactify $KM(X)$, then
the analogues conjecture seems to follow from 
\cite{JiMacPherson}.
 Indeed, Ji and MacPherson describe the boundary
of $\KM(X)^{BS}$ as a set of equivalence classes of, so called,
EDM-geodesics (cf. \cite[Prop.\ 14.16]{JiMacPherson}).  Moreover, all
EDM-geodesics are classified in \cite[Thm.\ 10.18]{JiMacPherson}.
They are of the form $(u,z,exp(tH)) \in N_Q \times X_Q \times A_Q$
where $Q \subset G$ is a rational parabolic subgroup and $N_Q \times
X_Q \times A_Q \isom \pd(N)$ is the associated horocycle
decomposition. We think, that linear degenerations to $[v]$ are
the geodesics associated to the stabilizer group $G_{[v]}$ of $[v] \in
\IP(N_\IC)$. Moreover, all geodesics $\gamma$ that converge to the
boundary component $e([v]) \subset \KM(X)^{BS}$ associated to $G_{[v]}$ should have the
EDM property.  It follows form the classification, that $\gamma$ is of the form
$(u,z,exp(tH))$ for some rational parabolic subgroup $Q \subset G$. 
Since $\gamma$ converges to $e([v])$, we have $Q=G_{[v]}$ and
therefore $\gamma$ should be a linear degeneration.

There is a natural map $\KM(X)^{BS} \ra \KM(X)$ (cf. \cite[III.15.4.2]{BorelJi}).
One should be able to prove the full conjecture by studying the fibers
of this map over a cusp $[v] \in \KM(X)$.

%%% Local Variables: 
%%% mode: latex
%%% TeX-master: "Cusps"
%%% End: 

\section{Moduli spaces of complexes on K3 surfaces}\label{sec:Moduli}
In this section we construct K3 surfaces as moduli spaces of stable
objects in the derived category of another K3 surface.  First we
introduce a moduli functor, which is a set-valued version of
Lieblich's moduli stack cf. \cite{Lieblich}.  We will show in
subsection \ref{sec:fm_moduli}, that Fourier--Mukai equivalences
induce natural isomorphisms between moduli spaces.  
Finally, in subsection \ref{sec:Reconstruction} we prove our main theorem.

Before we can give the actual definition, we recall the notion of a perfect
complex in the first subsection. Moreover, we establish a base-change formula
and a semi-continuity result which will be important later.

\subsection{Perfect complexes}
We denote by $\kd(X)$ the unbounded derived category of coherent sheaves on $X$.

\begin{defn}
  Let $X \ra T$ be a morphism of schemes.  A complex $E \in \kd(X)$ is called
  {\em relatively $T$-perfect}, if there is an open cover $\{U_\nu\}$ of $X$
  such that $E|_{U_\nu}$ is quasi-isomorphic to a bounded complex of $T$-flat
  sheaves of finite presentation.
  
  We call $E$ {\em strictly $T$-perfect} if $E$ itself is quasi-isomorphic to a
  bounded complex of $T$-flat sheaves of finite presentation.
\end{defn}

\begin{lem} \cite[Cor.\ 2.1.7]{Lieblich} \label{perfect_lemma}
  If $T$ is an affine scheme and $f:X \ra T$ is a flat, finitely presented and 
  quasi-projective morphism, then every relatively $T$-perfect complex is strictly $T$-perfect.
\end{lem}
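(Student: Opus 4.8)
The plan is to prove the statement by directly producing a global bounded resolution of $E$ by $T$-flat, finitely presented sheaves, which is precisely the data required for strict $T$-perfectness. First I would record two consequences of relative perfectness that hold globally. Since the defining condition is local on $X$ and $f$ is quasi-compact (being finitely presented over the affine $T$), the cohomology sheaves $H^i(E)$ are coherent and vanish outside a fixed range; and the local Tor-amplitude of $E$ over $T$, being finite on each $U_\nu$, is bounded by a single integer $d$ independent of $\nu$. After a shift we may assume $H^i(E)=0$ for $i>0$.

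Next I would construct a resolution $P^\bullet \ra E$, bounded above and concentrated in non-positive degrees, whose terms are finite direct sums $P^i = \bigoplus_j \ko_X(-n_{ij})$. This is where the hypotheses enter essentially: because $X$ is quasi-projective over $T$ it carries an $f$-ample line bundle $\ko_X(1)$, and because $T$ is affine every coherent sheaf on $X$ is a quotient of some $\ko_X(-n)^{\oplus r}$ with $n \gg 0$ (relative Serre generation). Building the resolution one step at a time, downward from the top cohomological degree, produces the desired $P^\bullet$; each $\ko_X(-n)$ is locally free over the $T$-flat sheaf $\ko_X$, hence itself $T$-flat and finitely presented.

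The decisive step is truncation. Fix $N \gg 0$, larger than both $d$ and the cohomological amplitude of $E$, and let $Z = \ker(P^{-N} \ra P^{-N+1})$ be the corresponding syzygy sheaf. Since $P^\bullet$ is a resolution and the degree $-N$ lies below the range in which $E$ has cohomology, the brutal truncation
\[ \tilde{P}^\bullet = [\, Z \lra P^{-N+1} \lra \cdots \lra P^{0}\, ] \]
is a bounded complex quasi-isomorphic to $E$. The terms $P^i$ with $i>-N$ are already $T$-flat and finitely presented, so it remains only to verify the same for $Z$. Finite presentation follows because $Z$ is the kernel of a map of coherent sheaves. Flatness is local, so I would check it on each $U_\nu$: there $Z|_{U_\nu}$ is a high syzygy of a complex of $T$-flat sheaves whose Tor-dimension over $T$ is at most $d<N$, and the standard homological fact that a syzygy past the Tor-dimension in a flat resolution is again flat gives the claim.

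I expect the main obstacle to be exactly this last flatness verification, that is, passing from the \emph{local} finiteness of Tor-amplitude supplied by relative perfectness to the \emph{global} flatness of the single sheaf $Z$. The mechanism is that flatness of $Z$ must be deduced from a uniform bound $N$ that works simultaneously on every member of the cover, which is why quasi-compactness of $X$ over $T$ is needed to make $d$ finite; the compatibility of the global syzygy with the local ones then reduces the question to the module-theoretic statement over a ring. The quasi-projectivity and affineness hypotheses are used only to guarantee the global resolution by twists of $\ko_X(1)$ in the second step, and cannot be dispensed with there.
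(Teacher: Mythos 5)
The paper does not prove this lemma at all: it is quoted verbatim from Lieblich [Cor.\ 2.1.7], so there is no internal proof to compare against. Your argument is the standard one (and, in outline, surely the same as Lieblich's): use quasi-compactness of $X$ over the affine base to get a uniform bound on the cohomological range and on the Tor-amplitude of $E$ over $T$; use the relatively ample line bundle coming from quasi-projectivity, together with affineness of $T$, to build a bounded-above resolution $P^\bullet \to E$ by finite sums of twists $\ko_X(-n)$, which are $T$-flat precisely because $f$ is flat (the only place that hypothesis enters); then truncate past the Tor-amplitude and check flatness of the truncating sheaf locally by dimension shifting. In substance this is correct, and correctly identifies where each hypothesis is used.

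One step is wrong as literally written and needs repair. With $Z=\ker(P^{-N}\to P^{-N+1})$ placed in degree $-N$, the only available map $Z\to P^{-N+1}$ is the restriction of the differential, which is zero by the very definition of $Z$; the displayed complex is then the direct sum $Z[N]\oplus[P^{-N+1}\to\cdots\to P^0]$, whose cohomology in degree $-N$ is $Z\neq 0$, so it is \emph{not} quasi-isomorphic to $E$. (What you wrote is also not the brutal truncation; the construction you want is the canonical one.) The correct truncating object is the cokernel $C=\mathrm{coker}(P^{-N-1}\to P^{-N})$ placed in degree $-N$ -- equivalently, using exactness of $P^\bullet$ in degrees below the amplitude, the image $\mathrm{im}(P^{-N}\to P^{-N+1})=\ker(P^{-N+1}\to P^{-N+2})$ mapping into $P^{-N+1}$ by inclusion. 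Then $[C\to P^{-N+1}\to\cdots\to P^0]$ is the canonical truncation $\tau_{\geq -N}P^\bullet$, which is quasi-isomorphic to $E$, and your dimension-shifting argument for flatness applies verbatim to $C$. The cokernel formulation also repairs a second, smaller point: you justify finite presentation of $Z$ by saying it is the kernel of a map of coherent sheaves, which is fine in this paper's noetherian setting but not in the generality of the lemma's hypotheses (affine and finitely presented, with no noetherian assumption); a cokernel of a map of finitely presented sheaves is finitely presented over any base, so the corrected version costs nothing.
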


The following base-change result is presumably well known to the
experts. The main difference to the usual base change theorems like
\cite[Prop.\ 5.2]{Hartshorne} is that we do not assume flatness 
of any maps, but perfectness of the complex.

\begin{prop}[Base Change]\label{NonflatBC}
  Consider a  diagram of separated, noetherian schemes
  \[ \xymatrix{ X' \ar[r]^{j} \ar[d]^{q} & X \ar[d]^{p}  \\ 
    Y' \ar[r]^i \ar[d]^{v} & Y \ar[d]^{u} \\ 
    S'  \ar[r]^{k} & S } \]
  where $p$ is proper and both squares are Cartesian. 
  Let $E \in \kd^b(X)$ be a strictly $S$-perfect complex.
  Then the base change morphism
  \[ \DL i^* \DR p_* E \lra \DR q_* \DL j^* E\]
  is an isomorphism.

  The same holds true if $E \in \kd^b(X)$ is only $S$-perfect but $p$ is flat, 
  finitely presented and projective.
\end{prop}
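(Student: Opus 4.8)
The plan is to reduce the statement, which is local on $Y$ and on $S$, to an explicit computation with a \v{C}ech complex, in which the relative perfectness of $E$ supplies exactly the flatness needed to commute the (non-flat) functor $\DL i^*$ past $\DR p_*$. First I would record that the base-change morphism is the canonical one built from the adjunction $(\DL i^*, \DR i_*)$ together with the identity $\DR p_* \DR j_* \simeq \DR i_* \DR q_*$ coming from $p \circ j = i \circ q$, and that being an isomorphism may be checked locally on the targets. Hence I may assume $S = \Spec A$, $Y = \Spec B$ and $S' = \Spec A'$ are affine. Since the lower square is Cartesian, $Y' = \Spec B'$ with $B' = B \otimes_A A'$; since both squares are Cartesian, so is the outer one, giving $X' = X \times_Y Y' = X \times_S S'$. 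Thus $j\colon X' \to X$ is the pullback of $X$ along $S' \to S$, and $\DL i^*$ is $-\otimes^{\DL}_B B'$.

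Next, using strict $S$-perfectness I replace $E$ by a bounded complex $F^\bullet$ of finitely presented, $A$-flat $\ko_X$-modules. Because each $F^n$ is flat over $A$ and $X' = X \times_S S'$, no higher Tor appears and $\DL j^* E \simeq j^* F^\bullet$. Choosing a finite affine cover $\{U_\alpha\}$ of $X$ (available since $p$ is proper, so $X$ is separated and of finite type over the affine $Y$, and intersections of affines are affine), the total \v{C}ech complex $\check C^\bullet = \check C^\bullet(\{U_\alpha\}, F^\bullet)$ is a bounded complex of $B$-modules computing $\DR p_* E$, and each of its terms is a finite product of $A$-flat modules $\Gamma(U_{\alpha_0\cdots\alpha_p}, F^n)$. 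The corresponding \v{C}ech complex on $X' = X \times_S S'$ computing $\DR q_* \DL j^* E$ is identified termwise with $\check C^\bullet \otimes_A A'$.

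It then remains to prove, termwise, that for an $A$-flat module $M$ one has
\[ M \otimes^{\DL}_B B' \simeq M \otimes_A A', \qquad B' = B \otimes_A A'. \]
Choosing a flat resolution $P_\bullet \to A'$ over $A$ produces a $B$-flat resolution $B \otimes_A P_\bullet \to B'$, whence $M \otimes^{\DL}_B B' \simeq M \otimes_A P_\bullet \simeq M \otimes_A A'$, the last step because $M$ is $A$-flat. Applying this to the bounded complex $\check C^\bullet$ and assembling the three identifications yields $\DL i^* \DR p_* E \simeq \DR q_* \DL j^* E$. The crux of the whole argument is precisely this commutation: since neither $i$ nor $p$ is flat, the usual flat base-change theorem does not apply, and the content is that $A$-flatness of the terms of $F^\bullet$ (rather than $B$-flatness) already suffices once one knows $B' = B \otimes_A A'$, so that $\DL i^* = -\otimes^{\DL}_B B'$ becomes computable termwise on the \v{C}ech complex.

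For the second assertion I would again localize on $Y$ so that $Y$ is affine; then $p$ is flat, finitely presented and quasi-projective over the affine $Y$, so by Lemma \ref{perfect_lemma} the complex $E$ admits a strict global model by a bounded complex of finitely presented $\ko_X$-modules, which, using flatness of $p$, one may arrange to be flat over $B$. Flatness of $p$ makes both $\DL j^*$ and $\DL i^*$ computable termwise on this model, and the same \v{C}ech computation goes through with $B$-flat terms in place of $A$-flat ones (indeed $\check C^\bullet \otimes^{\DL}_B B' \simeq \check C^\bullet \otimes_B B'$ directly). The main obstacle here is the passage from the merely local (relative) perfectness to a strict global model, which is exactly the role of Lemma \ref{perfect_lemma} and is where the flatness and projectivity of $p$ are used.
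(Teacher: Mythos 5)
Your overall route is the same as the paper's: reduce to affine $Y,S,S'$, replace $E$ by a bounded complex of finitely presented $S$-flat sheaves, compute the pushforwards by a finite affine \v{C}ech complex, and conclude by termwise base change for morphisms of affine schemes. The gap is in the step you yourself call the crux. You assert that an $A$-flat resolution $P_\bullet \ra A'$ yields a $B$-flat \emph{resolution} $B\tensor_A P_\bullet \ra B'$. The terms are indeed $B$-flat, but $B\tensor_A P_\bullet$ computes $B\tensor^{\DL}_A A'$, so its homology is $\mathrm{Tor}^A_*(B,A')$; it is a resolution of $B'=B\tensor_A A'$ only when $\mathrm{Tor}^A_{>0}(B,A')=0$, for instance when $B$ or $A'$ is flat over $A$ --- exactly the flatness the proposition refuses to assume. (The same unproved principle is hidden in your earlier step ``each $F^n$ is $A$-flat, hence $\DL j^* E \simeq j^* F^\bullet$''.) In fact the lemma is false in this generality: take $A=k[x]$, $B=k[x,y]/(xy)$, $A'=A/(x)$, so $B'=B/(x)\isom k[y]$, and $M=B/(y)\isom k[x]$, which is $A$-flat; the periodic free resolution $\cdots \xra{\cdot x} B \xra{\cdot y} B \ra B/(y)$ gives $\mathrm{Tor}^B_{2n}(M,B')\isom k$ for all $n\geq 1$, so $M\tensor^{\DL}_B B' \not\isom M\tensor_A A'$.

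This example even refutes the proposition as literally stated, so the gap cannot be closed without changing the hypotheses. Put $S=\Spec A$, $Y=\Spec B$ (the coordinate cross), $S'=\Spec A'$ (the origin), let $X\subset Y$ be the $x$-axis, $p$ the closed immersion (proper), and $E=\ko_X$, which is strictly $S$-perfect since $u\circ p\colon X\ra S$ is an isomorphism. Both squares are Cartesian, $Y'$ is the $y$-axis and $X'$ the reduced origin. Then $\DR q_* \DL j^* E$ is a skyscraper $k$ in degree $0$, while $\DL i^* \DR p_* E$ has cohomology $\mathrm{Tor}^B_{2n}(B/(y),B/(x))\isom k$ in every degree $-2n\leq 0$, so the base-change map is not an isomorphism. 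To be fair, the paper's own proof elides exactly the same point: its Steps 1 and 2 deduce $\DL i^*\DR p_* E \isom i^* p_*\kc(\gu,E)$ and $\DL j^* E \isom j^*\kc(\gu,E)$ from $S$-flatness of the terms alone, which is precisely the assertion you tried, and failed, to prove; your blind reconstruction has faithfully reproduced, and made visible, a gap in the paper. Statement, lemma and both proofs become correct if one additionally assumes the structure morphisms flat (e.g.\ $u$ and $p$ flat), since then $B\tensor_A P_\bullet$ really is a resolution; this holds in all of the paper's applications, where the relevant maps are identities or projections such as $X\times T\ra T$. Your second part has a further problem of the same nature: Lemma \ref{perfect_lemma} applied to $p\colon X\ra Y$ would require $E$ to be relatively $Y$-perfect, which is not given, and flatness of $p$ does not produce a $B$-flat model --- on the cross ($p=\mathrm{id}$) the $A$-flat sheaf $B/(y)$ has unbounded Tor-dimension over $B$, so no bounded $B$-flat model can exist.
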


\begin{proof}
As the statement is local in $Y'$ we may assume that $Y,Y',S,S'$ are affine.
If $E$ is $S$-perfect and $p$ is flat, finitely-presented and
projective, then Lemma \ref{perfect_lemma} shows that $E$ is strictly
$S$-perfect.  Hence it suffices to treat the case that $E$ is a
bounded complex of $S$-flat coherent sheaves on $X$.

Step 0) Choose a finite open affine cover $\gu=\{ U_\nu \}$ of $X$. 
The Cech-complex $\kc(\gu,E) \in \kd^b(X)$ of $E$ with respect to $\gu$ is the
total complex of the following double complex of quasi-coherent sheaves on $X$
\[ \kc^q(\gu,E^p)=\prod_{\nu_0 < \dots < \nu_q} \iota_* E^p|_{U_{\nu_0}\cap \dots \cap U_{\nu_q}}, 
\quad d_1^{pq}=d_E^p, d_2^{pq}=(-1)^p \delta^{q}, \]
where $\iota:U_{\nu_0}\cap \dots \cap U_{\nu_q} \ra X$ denotes the inclusion.
It comes with a canonical morphism
\begin{align}\label{Equis} E \lra \kc(\gu,E), \quad m \in E^p \mapsto (m|_{U_\nu})_\nu \in \kc^0(\gu,E^p).
\end{align}
which is a quasi-isomorphism. This can be checked using the spectral
sequence for double complexes and the vanishing of the $E_2^{pq}$ in
degrees $q\neq 0$.

Step 1) We claim that the sheaves $\kc^n(\gu,E), n\in \IZ$ are acyclic for $p_*$.
The sheaf $\kc^n(\gu,E)$ is a direct sum of sheaves of the form $\iota_* E^p|_{U'}$
where $U'=U_{\nu_0}\cap \dots \cap U_{\nu_n}$.
Since $X$ is separated $U'$ is affine. The morphism $p': U' \lra Y $ between affine schemes 
is affine and hence all higher direct images 
$\DR^i p' E^p|_{U'} = 0, i>0$ vanish. 
Hence
\[ \DR p_* E \isom p_* \kc(\gu,E). \]

The sheaves $\kc^n(\gu,E)$ are still $S$-flat, since $p_* E|_{U'}$ 
are given by restriction of scalars along the morphism of affine 
schemes $p':U' \ra Y$. This shows that
\begin{align}\label{pi-quis} \DL i^* \DR p_* E  \isom i^* p_* \kc(\gu,E).
\end{align}

Step 2) On the other hand we have
\[ \DL j^* E \isom j^* \kc(\gu,E), \]
since $\kc^n(\gu,E)$ is $S$-flat for all $n \in \IZ$.

We claim that $j^* \kc^n(\gu,E), n \in \IZ$ are acyclic for $q_*$.

Again we use that $j^* \kc^n(\gu,E)$ are direct sums of sheaves of 
the form $j^* \iota_* E^p|_{U'}$ with $U'=U_{\nu_0}\cap \dots \cap U_{\nu_q}$.
Consider the open affine subset $V'=j^{-1}(U') \subset X'$ and the following diagram
\[ \xymatrix { 
  V' \ar[d]^{\iota'} \ar[r]^{j'} \ar@/_2pc/[dd]_{q'} & U'\ar[d]^{\iota} \ar@/^2pc/[dd]^{p'} \\
  X' \ar[r]^{j} \ar[d]^q & X \ar[d]^p \\
  Y' \ar[r]^{i} & Y 
}
\] 
We have
\[ j^* \iota_* E^p|_{U'} = j^* \iota_*  \iota^* E^p  =  \iota'_* j'^* \iota^* E^p = \iota'_* \iota'^*  j^* E^p. \]
In the second step we use base change for open inclusions of affine schemes into separated schemes.
It follows that the higher direct images vanish:
\[ \DR^iq_*( j^* \iota_* E^p|_{U'} ) =\DR^iq_*(\iota'_* \iota'^*  j^* E^p)= \DR^i{q'}_* (\iota'^*  j^* E^p) = 0\]
for all $i>0$. We used in the second step that $\iota_*$ is exact and in the third step that $q'$ is affine.

This shows that
\begin{align}\label{qj-quis} \DR q_* \DL j^* E \isom q_* j^* \kc(\gu,E).
\end{align}

Step 3) The base change morphism 
\[ \theta: \DL i^* \DR p_* E  \lra  \DR q_* \DL j^* E  \]
can be constructed using the adjunction of $\DL q^*,\DR q_*,$ and $\DL p^*,\DR p_*$.
It can be computed on appropriate resolutions using the adjunction of functors of sheaves
between $q^*,q_*$ and $p^*,p_*$.

Under the quasi-isomorphisms (\ref{qj-quis}) and (\ref{pi-quis}) the morphism $\theta$ 
is given by a morphism of complexes whose components are base-change morphisms 
\[   q_*  j^* \iota_*  E^p|_{U'} = q'_* j'^* E^p|_{U'} \lra  i^* p'_*  E^p|_{U'} = i^* p_*  \iota_*  E^p|_{U'}  \]
for the affine schemes $V',U',Y,Y'$ and thus isomorphisms.
\end{proof}

\begin{prop}[Semi-continuity]\label{semicontinuity}
  Let $X \ra T$ be a proper morphism between 
  separated, noetherian schemes and let $E \in \kd^b(X)$ be a $T$-perfect complex.
  
  For $t \in T$, denote by $i_t:X_t= X \times_T \{t\} \ra X$ the inclusion of the fiber
  and by $E_t = \DL i_t^* E$ the derived restriction. 
  Then the function
  \[ \phi^i: T \ra \IZ: t \mapsto dim_{k(t)} \IH^i(X_t,E_t) \]
  is upper semi-continuous. 
\end{prop}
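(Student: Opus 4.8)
The plan is to prove upper semi-continuity by reducing to a statement about a bounded complex of locally free sheaves, where the classical "cohomology and base change" machinery applies. The key observation is that $T$-perfectness lets us locally replace $E$ by a finite complex of vector bundles, and that the fiberwise cohomology $\HH^i(X_t, E_t)$ can be computed as the cohomology of a complex of finite-dimensional vector spaces depending algebraically on $t$.

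First I would reduce to the affine case: semi-continuity is local on $T$, so I may assume $T = \Spec R$ is affine and noetherian. Since the statement is local on $T$, I would also like to make $E$ strictly $T$-perfect; here Lemma \ref{perfect_lemma} does not immediately apply because $X \ra T$ need not be quasi-projective, but I can instead work locally and use that $E$ is $T$-perfect to cover $X$ by opens on which $E$ is a bounded complex of $T$-flat finitely presented sheaves. The real input I would invoke is the standard result that, for a proper morphism $X \ra T$ with $T$ affine and a $T$-perfect complex $E$, there exists a bounded complex $P^\bullet$ of finite locally free $R$-modules (a \emph{perfect complex of amplitude contained in a bounded range}) together with a functorial isomorphism
\[ \HH^i(X_t, E_t) \isom \HH^i\bigl( P^\bullet \tensor_R k(t) \bigr) \]
for every $t \in T$. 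This is the Grothendieck complex; its existence for $T$-perfect $E$ follows from the theory of cohomology and base change as developed for perfect complexes (e.g.\ by pushing forward a Čech resolution as in the proof of Proposition \ref{NonflatBC} and then replacing the resulting perfect complex $\DR p_* E$ on $\Spec R$ by a bounded complex of finite free modules).

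Granting the Grothendieck complex $P^\bullet$, the proof concludes by a routine rank-count. For each $t$, write $d^i_t : P^i \tensor k(t) \ra P^{i+1}\tensor k(t)$ for the differential. Then
\[ \phi^i(t) = \dim_{k(t)} \HH^i(P^\bullet \tensor k(t)) = \dim P^i \tensor k(t) - \op{rk}(d^i_t) - \op{rk}(d^{i-1}_t). \]
Since $P^i$ is locally free, $\dim_{k(t)} P^i \tensor k(t)$ is locally constant, while each $\op{rk}(d^i_t)$ is lower semi-continuous in $t$ (the locus where the rank of a matrix of regular functions drops is closed, being cut out by the vanishing of minors). Therefore $\phi^i$ is a locally constant function minus two lower semi-continuous functions, hence upper semi-continuous.

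The main obstacle I anticipate is the construction of the Grothendieck complex $P^\bullet$ without a projectivity hypothesis on $X \ra T$. In the classical treatment one assumes $X \ra T$ proper and works with $T$-flat \emph{sheaves}, whereas here $E$ is only a $T$-perfect \emph{complex}. The cleanest route is to first apply the base-change result of Proposition \ref{NonflatBC} to see that $\DR p_* E$ is itself a perfect complex on $\Spec R$ whose formation commutes with arbitrary base change $k(t) \ra R$; once $\DR p_* E$ is known to be perfect over the affine base, representing it by a bounded complex of finite free $R$-modules is standard commutative algebra. Thus the delicate point is verifying that $\DR p_* E$ is perfect and base-change compatible, after which the semi-continuity is formal.
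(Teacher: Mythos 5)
Your proposal is correct and takes essentially the same route as the paper: reduce to an affine base, use the base-change result of Proposition \ref{NonflatBC} to identify $\IH^i(X_t,E_t)$ with the cohomology of $\DL i_t^* \DR p_* E$, represent $\DR p_* E$ by a bounded complex of finite locally free modules (the Grothendieck complex), and conclude by the lower semi-continuity of matrix ranks. The paper compresses the last two steps into the observation that $\DR p_* E$ is a bounded complex of $T$-flat modules with coherent cohomology together with a citation of \cite[Thm.\ III.12.8]{Hartshorne}, which is precisely the rank-count argument you spell out.
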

\begin{proof}
  We may assume $T$ is affine and $E$ is a bounded complex of $T$-flat coherent sheaves.
  By Theorem \ref{NonflatBC} we have
  \[ \IH^i(X,E_t) = \curly{H}^i(\DL i_t^* \DR p_* E ) \]
  as sheaves on $\{t\} = Spec(k(t))$.
  In the proof of this theorem we saw that $\DR p_* E$ can be represented by a 
  bounded complex of $T$-flat, quasi-coherent sheaves
  with coherent cohomology.

  The remaining arguments are identical to the proof of the semi-continuity theorem 
  for a $T$-flat sheaf $E$ in \cite[Thm.\ III.12.8]{Hartshorne}.
\end{proof}

%%% Local Variables: 
%%% mode: latex
%%% TeX-master: "Cusps"
%%% End: 

\subsection{Moduli functor}
Let $X$ be a K3 surface and $T$ be a scheme over $\IC$.
For a point $t \in T(\IC)$ we denote by $i_t: X \ra X \times T$ the inclusion of the fiber
and for a complex $E \in \kd^b(X \times T)$ let $E_t = \DL i_t^* E$ be the restriction.
\begin{defn}
  For $v \in N(X)$ and $\sigma \in Stab(X)$ consider the moduli functor
  \[ \km^\sigma_X(v): (Shm/\IC)^{op} \ra (Set), 
  \quad T \mapsto \{ E \in \kd^b(X \times T) \,|\, (*) \, \}/ \sim. \]
  Here  $(Shm/\IC)$ is the category of separated schemes of finite type over $\IC$.\footnote{For 
    technical reasons related to Proposition \ref{NonflatBC} we have to restrict ourselves to this 
    subcategory. }
  The symbol $(*)$ stands for the following conditions.
  \begin{enumerate}
  \item The complex $E$ is relatively $T$-perfect.
  \item For all $t\in T(\IC)$ the restriction $E_t \in \kd^b(X)$ is $\sigma$-stable of Mukai vector $v(E_t)=v$.
  \end{enumerate}
  The equivalence relation $\sim$ is defined as follows.
  We have $E \sim E'$ if and only if 
  there is an open cover $\union T_\nu = T$ of $T$ such that for all $\nu$ 
  there is a line bundle $L \in Pic(T_\nu)$ and an even number $k \in 2 \IZ$ with 
$E \isom E' [k] \tensor pr_2^* L $
  on $X \times T_\nu$.

  To a morphism of schemes $i: S \ra T$ in $(Shm/\IC)$ the functor assigns the map
  \[ \DL i_X^*:  \km^\sigma_X(v)(T) \lra \km^\sigma_X(v)(S)\]
  sending $E \in \kd^b(X \times T)$ to  $\DL i_X^* E$, where $i_X=\id_X \times i $. 
\end{defn} 

\subsection{Moduli spaces under Fourier--Mukai transformations}\label{sec:fm_moduli}

\begin{thm}\label{moduli_under_fm}
  Let $\Phi: \kd^b(X) \ra \kd^b(Y)$ be a Fourier--Mukai equivalence 
  between two K3 surfaces $X$ and $Y$, then $\Phi$ induces an isomorphism of functors
  \[ \km^\sigma_X(v) \xlra{\isom} \km^{\Phi_*\sigma}_Y(\Phi_*^H v) \]
\end{thm}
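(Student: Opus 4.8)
The plan is to produce the natural transformation on $T$-points by the \emph{relative} Fourier--Mukai transform. Write $\Phi \isom \DR \pr_{Y*}(\pr_X^*(-) \otimes^{\DL} P)$ for a kernel $P \in \kd^b(X \times Y)$, and for a test scheme $T$ define
\[ \Phi_T : \kd^b(X \times T) \lra \kd^b(Y \times T), \quad
E \longmapsto \DR \pr_{YT*}\bigl( \pr_{XT}^* E \otimes^{\DL} \pr_{XY}^* P \bigr), \]
all projections being taken from $X \times Y \times T$. I would show that $\Phi_T$ descends to a map $\km^\sigma_X(v)(T) \to \km^{\Phi_*\sigma}_Y(\Phi_*^H v)(T)$, that these maps are natural in $T$, and that applying the construction to $\Phi^{-1}$ furnishes a two-sided inverse; together these give the asserted isomorphism of functors.

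The heart of the matter will be the fibrewise compatibility $(\Phi_T E)_t \isom \Phi(E_t)$ for every closed point $t \in T(\IC)$. To see it I would restrict $\Phi_T E$ along the fibre inclusion $Y = Y \times \{t\} \hookrightarrow Y \times T$. Since $\pr_{YT}: X \times Y \times T \to Y \times T$ is proper and, being a base change of $X \to \Spec \IC$, is flat, finitely presented and projective, and since $F := \pr_{XT}^* E \otimes^{\DL} \pr_{XY}^* P$ is relatively $T$-perfect (tensor of the relatively $T$-perfect complex $E$ with the perfect kernel $P$), the second case of Proposition \ref{NonflatBC} applies to the Cartesian square relating the fibre over $t$ to $Y \times T$. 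It yields $\DL i_t^* \DR \pr_{YT*} F \isom \DR \pr_{Y*} \DL j^* F$, where $\DL j^* F \isom \pr_X^* E_t \otimes^{\DL} P$ on $X \times Y$, so that $\DR \pr_{Y*} \DL j^* F \isom \Phi(E_t)$. Because $\Phi$ is an equivalence transporting $\sigma$ to $\Phi_*\sigma = (\Phi(\ka), \Phi^H z)$, the object $\Phi(E_t)$ is $\Phi_*\sigma$-stable, and $v(\Phi(E_t)) = \Phi^H(v(E_t)) = \Phi_*^H v$; combined with the fact that a transform with perfect kernel preserves relative $T$-perfectness, this places $\Phi_T E$ in $\km^{\Phi_*\sigma}_Y(\Phi_*^H v)(T)$.

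It then remains to check that $\Phi_T$ respects the equivalence relation $\sim$ and is natural. For the former, shifts commute with $\Phi_T$, and the projection formula gives $\Phi_T(E \otimes \pr_T^* L) \isom \Phi_T(E) \otimes \pr_T^* L$ for $L \in \Pic(T)$ because the kernel is pulled back from $X \times Y$; hence $\sim$-equivalent families map to $\sim$-equivalent ones. For naturality, given $i : S \to T$ the canonical isomorphism $\DL i_Y^* \circ \Phi_T \isom \Phi_S \circ \DL i_X^*$ is again an instance of base change along $S \to T$, supplied by Proposition \ref{NonflatBC}. Finally, writing $\Psi$ for the inverse Fourier--Mukai equivalence, the convolution of the two kernels is $\ko_\Delta$ up to an even shift, so $\Psi_T \circ \Phi_T$ and $\Phi_T \circ \Psi_T$ are the identity modulo twisting by a line bundle and even shift, i.e.\ the identity on $T$-points; thus the induced maps of sets are bijective and the transformation is an isomorphism.

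The hard part is precisely the fibrewise identification $(\Phi_T E)_t \isom \Phi(E_t)$: since a family $E$ parametrised by the moduli functor need not be flat over $T$ (only relatively $T$-perfect), ordinary cohomology-and-base-change is unavailable, and one genuinely relies on the perfect-complex base change of Proposition \ref{NonflatBC}, together with the observation that $\pr_{YT}$ is flat and projective so that its second, weaker hypothesis is met. Everything else (projection formula, compatibility of shifts, convolution of kernels) is formal.
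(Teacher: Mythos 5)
Your proposal is correct and takes essentially the same route as the paper: the relative Fourier--Mukai transform $\Phi_T$ on families, fibrewise compatibility $(\Phi_T E)_t \isom \Phi(E_t)$ and naturality both obtained from the non-flat base-change result (Proposition \ref{NonflatBC}), and inversion via the inverse kernel $P^{\vee}[2]$, whose convolution with $P$ is $\ko_\Delta$ and hence induces the identity on $T$-points up to the equivalence relation $\sim$. The one fact you invoke without justification --- that $\DR \pr_{YT*}$ of a relatively $T$-perfect complex is again relatively $T$-perfect --- is precisely where the paper cites SGA 6, III, 4.8; with that reference supplied, your argument is complete (and your explicit check that $\Phi_T$ respects shifts and line-bundle twists is a point the paper leaves implicit).
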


\begin{proof}
  Denote by $p_T,q_T$ and $\pi$ the projections from $X \times Y \times T$ to $X \times T, Y \times T$ 
  and $X \times Y$, respectively.
  Let $\kp \in \kd^b(X \times Y)$ be the Fourier--Mukai kernel of $\Phi$.
  We claim that the map
  \begin{align}\label{MMorp}  
    \km^\sigma_X(v)(T) \ni E  \mapsto \Phi_T(E) := \DR q_{T *}( p_T^* E \tensor^\DL \pi^* \kp )
  \end{align}
  induces a natural transformation $\km^\sigma_X(v) \ra \km^{\Phi_*\sigma}_Y(\Phi_*^H v)$ 
  between the moduli functors.
  For this we need to check the following properties.
  \begin{enumerate}
  \item The complex $\Phi_T(E)$ is relatively $T$-perfect.
  \item Naturality: For all  $i: S \ra T \in (Shm/\IC)$ and $E \in \km^\sigma_X(v)(T)$ we have  
    \[ \DL i^*_Y \circ \Phi_T(E) = \Phi_S \circ \DL i^*_X(E).  \] 
  \item For all $t\in T(\IC)$ the complex $\Phi_T(E)_t$ is $\Phi_*\sigma$-stable of Mukai vector $\Phi^H(v)$.
  \end{enumerate}

  Ad 1) As $X \times Y$ is a smooth projective scheme, we can represent $\kp$ by a bounded complex of
  coherent, locally free sheaves.  Since locally-free sheaves are acyclic for $\_\tensor^\DL\_$ 
  we find that $\pi^* \kp \tensor^\DL p_T^* E$ is $T$-perfect.

  Now, \cite[SGA 6,III,4.8]{SGA6}\footnote{ Grothendieck and Illusie
    use a slightly different definition of relative perfectness. The
    definition agrees with ours in the case of flat morphisms of
    finite type between locally noetherian schemes (cf. \cite[Def.\
    2.1.1.]{Lieblich} ff). The projections $X \times T \ra T$ and $Y
    \times T \ra T$ clearly have this property.
  } shows that the pushforward $\DR q_{T *}( p_T^* E \tensor^\DL \pi^* \kp )$ 
  is still $T$-perfect. 

  Ad 2) This is a direct computation using the base-change formula Proposition \ref{NonflatBC}. 

  Ad 3) By (2) we have $ \Phi_T(E)_t \isom \Phi(E_t)$.   
  Now $(3)$ follows from the definition of $\Phi_* \sigma$ and $\Phi^H(v)$.

  Finally we need to show that (\ref{MMorp}) is an isomorphism.
  For this we use the following straight forward Lemma.
  \begin{lem}
    Let $\Phi: \kd^b(X) \ra \kd^b(Y)$ and $\Psi: \kd^b(Y) \ra \kd^b(Z)$ be derived equivalences between K3 surfaces $X,Y,Z$,
    then
    \[ \Phi_T \circ \Psi_T = (\Phi \circ \Psi)_T : \km^\sigma_X(v) \lra \km^{\tau}_Z(w)  \]
    where $w=\Psi^H(\Phi^H(v))$ and $\tau=\Psi_*(\Phi_*(\sigma))$.  \qedhere
  \end{lem}   
  An inverse to the equivalence $\Phi$ is given by a Fourier--Mukai 
  transformation $\Psi$ with kernel $\kp\dual[2]$.
  Moreover, the kernels of the compositions $ \Psi \circ \Phi$,  $\Phi \circ \Psi$
  are quasi-isomorphic to $\ko_\Delta \in \kd^b(X \times X)$ and
  $\ko_\Delta \in \kd^b(Y \times Y)$ respectively (cf.\ \cite[5.7, ff.]{HuybrechtsFM}).
  Clearly $\ko_\Delta$ induces the identity on $\km^\sigma_X(v),\km^\sigma_Y(v)$.
  This shows that $\Phi_T$ and  $\Psi_T$ are inverse natural transformations. \hfill\qed
\end{proof}

%%% Local Variables: 
%%% mode: latex
%%% TeX-master: "Cusps"
%%% End: 

\subsection{More on stability conditions}
Before we can finally state our main result on moduli spaces of stable
objects we need another digression on stability conditions.
First, we prove a classification result for semi-stable objects,
then we introduce $v_0$-general stability conditions and derive some
basic properties.

\begin{prop}\label{v_0_semi_stable}
  Let $\sigma \in U(X)$ be a stability condition. Then
  an object $E$ is $\sigma$-semi-stable with Mukai vector $v_0=(0,0,1)$ 
  if and only if there is an $x \in X$ and $k \in 2\IZ$ such that $E\isom \ko_x[k]$.
  \[ \Set{ E \in \cd^b(X)}{ v(E)=v_0, E\; \sigma\text{-semi-stable} }
  = \Set{ \ko_x[2k] }{ x \in X, k\in \IZ }. \]
\end{prop}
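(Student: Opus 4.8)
The plan is to treat the two inclusions separately, the first being immediate from $\sigma \in U(X)$ and the second carrying the real content. For ``$\supseteq$'': Proposition \ref{stab_criterion}(1) guarantees that every skyscraper $\ko_x$ is $\sigma$-stable, hence $\sigma$-semi-stable, and one has $v(\ko_x)=v_0$. Since an even shift preserves semi-stability and leaves the Mukai vector unchanged, $v(\ko_x[2k])=(-1)^{2k}v(\ko_x)=v_0$, so each $\ko_x[2k]$ lies in the left-hand set.

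For the reverse inclusion, let $E$ be $\sigma$-semi-stable with $v(E)=v_0$, and write $\phi_0$ for the common phase of the $\ko_x$. Because $v(E)=v_0=v(\ko_x)$ the central charges agree, $Z(E)=v_0.\mho=Z(\ko_x)$, so the phase of $E$ is congruent to $\phi_0$ modulo $2$; after replacing $E$ by an even shift I may assume $E\in\kp(\phi_0)$, the abelian category of semi-stable objects of phase $\phi_0$ in which $\ko_x$ is simple. The core of the argument exploits that $v_0$ is isotropic: by Riemann--Roch, $\chi(E,\ko_x)=-v(E).v(\ko_x)=-v_0^2=0$ for every $x$. Serre duality on the K3 surface gives $\Hom(E,\ko_x[i])\isom\Hom(\ko_x,E[2-i])^\ast$, and the phase comparison for semi-stable objects (a nonzero morphism $A\to B$ forces $\phi(A)\le\phi(B)$) confines $\Hom(E,\ko_x[i])$ to the range $0\le i\le 2$. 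Since $\{\ko_x\}_{x\in X}$ is a spanning class of $\cd^b(X)$ and $E\neq 0$, there is some $x$ with $\Hom(E,\ko_x[i])\neq 0$ for one such $i$.

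At this point the vanishing $\chi(E,\ko_x)=0$ does the decisive work: if both $\Hom(E,\ko_x)=0$ and $\Hom(E,\ko_x[2])=0$, then $0=\chi(E,\ko_x)=-\dim\Hom(E,\ko_x[1])$ forces the middle term to vanish as well. Hence for the chosen $x$ we necessarily have $\Hom(E,\ko_x)\neq 0$, or, via Serre duality, $\Hom(\ko_x,E)\neq 0$. I would then promote such a morphism to an isomorphism inside $\kp(\phi_0)$: given nonzero $f\colon E\to\ko_x$, its image is a nonzero subobject of the simple object $\ko_x$, so $f$ is an epimorphism whose kernel has Mukai vector $v_0-v_0=0$ and hence central charge $0$, impossible for a nonzero object; thus $\ker f=0$ and $E\isom\ko_x$. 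The case $\Hom(\ko_x,E)\neq 0$ is symmetric, the cokernel now having vanishing Mukai vector. Undoing the even shift yields $E\isom\ko_x[2k]$.

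The one genuinely delicate point I anticipate is excluding the ``purely middle-degree'' scenario, in which a priori $\Hom(E,\ko_x[1])$ could be the only nonvanishing group for every $x$; this is exactly where the isotropy $v_0^2=0$ is indispensable, since it is the identity $\chi(E,\ko_x)=0$ that converts the spanning-class input into a genuine degree-$0$ or degree-$2$ morphism and thereby detects $E$ as an honest point sheaf.
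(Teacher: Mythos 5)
Your proof is correct, but it takes a genuinely different route from the paper's. The paper argues via the $\GLT$-action: it reduces to one of Bridgeland's explicit stability conditions $\sigma(\beta,\omega)\in V(X)$, shifts $E$ into the heart $\ka(\beta,\omega)$ (the explicit value $Z(v_0)=-1$ forces the shift to be even and the phase to be one), takes a Jordan--H\"older filtration there, and then invokes Huybrechts' classification of the stable objects of phase one (\cite[Prop.\ 2.2]{HuybrechtsDAeq2008}): each stable factor is either a skyscraper $\ko_x$ or a shifted vector bundle $F[1]$ with $v(F[1])=-(r,l,s)$, $r>0$, after which rank bookkeeping in $v(E)=(0,0,1)$ excludes all $F[1]$ factors and forces exactly one skyscraper factor. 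You instead stay abstract and use only condition (1) of Proposition \ref{stab_criterion} --- that all $\ko_x$ are $\sigma$-stable of a common phase $\phi_0$ --- combining Serre duality and phase bounds to confine $\Hom(E,\ko_x[i])$ to $0\le i\le 2$, the isotropy $\chi(E,\ko_x)=-v_0^2=0$ plus the spanning-class property of skyscrapers to produce a nonzero morphism $E\to\ko_x$ or $\ko_x\to E$ of pure phase, and then simplicity of $\ko_x$ in the abelian category $\kp(\phi_0)$ together with additivity and nonvanishing of $Z$ to promote that morphism to an isomorphism. Your approach buys independence from the external classification result and from the reduction to $V(X)$: it needs only Bridgeland's foundational facts about slicings (each $\kp(\phi)$ is abelian, its simple objects are the stable ones, and $Z$ is additive and nonzero on nonzero objects), and it applies verbatim to any stability condition whose skyscrapers are all stable of one phase, not just to those in $U(X)$. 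The paper's approach, in exchange, yields more concrete information en route --- an explicit description of what all the Jordan--H\"older factors of such an $E$ could be --- which is the style of argument it reuses elsewhere (e.g.\ in Proposition \ref{StableFamilies}).
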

\begin{proof}
  Let $\sigma \in U(X)$ be a stability condition. 
  The objects $\ko_x, x\in X$ are $\sigma$-stable by Theorem \ref{stab_criterion}
  and hence in particular semi-stable.

  Let $E'$ be a $\sigma$-semi-stable object with Mukai vector $v_0$. 
  Applying an element in $\GLT$ we can assume that $\sigma$ is of the form $\sigma(\omega,\beta)$. 
  There is a unique $k \in \IZ$ such that $E'[k]=E$ lies in the heart $\ka(\omega,\beta)$.
  As $Z_\sigma(E)=(-1)^k Z_\sigma(v_0)=-(-1)^k$ has to lie in $\IH \union \IR_{<0}$ the
  number $k$ has to be even and the phase of $E$ is one. 
  Take a Jordan--H\"older filtration
  \[ 0 \subset E_1 \subset E_2 \subset \cdots \subset E_n=E \]
  of $E$ in $\ka(\omega,\beta)$. The stable quotients $A_i =E_i / E_{i-1}$
  have the same phase as $E$. Hence we can use the classification result
  of Huybrechts, \cite[Prop.\ 2.2]{HuybrechtsDAeq2008}, which shows that
  $A_i = F[1]$ for a vector bundle $F$ or $A_i=\ko_x$ for some $x \in X$
  Note that the Mukai vectors in these two cases are given by
  \[ v(F[1])=-(r,l,s) \text{ with } r>0, \quad v(\ko_x)=(0,0,1). \]
  By assumption we have $\sum_i v(A_i)=v(E)=(0,0,1)$.
  Hence the sum over the ranks of all occurring vector bundles has to be zero.
  This is only possible if there are none of them.
  Hence $E$ is an extension of skyscraper sheaves. Comparing 
  Mukai vectors again, one sees that $E$ has to be of the form $\ko_x$ for some $x \in X$.
\end{proof}

\begin{defn}\label{def_v_general}
  Fix a Mukai vector $v\in N(X)$. A stability condition $\sigma \in Stab^\dagger(X)$ 
  is called {\em $v$-general} if every $\sigma$-semi-stable object $E$ of Mukai 
  vector $v(E)=v$ is $\sigma$-stable.
\end{defn}

\begin{lem}\label{v_0_general}
  Every stability condition $\sigma \in U(X)$ is $v_0=(0,0,1)$-general.
  No stability condition $\sigma \in \del U(X) \subset Stab^\dagger(X)$ is $v_0$-general.
\end{lem}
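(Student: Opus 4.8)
The plan is to treat the two assertions separately, deriving the first directly from Proposition \ref{v_0_semi_stable} and the second from Bridgeland's boundary description (Theorem \ref{U_boundary}) supplemented by a closedness-and-density argument.

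For the first assertion I would invoke Proposition \ref{v_0_semi_stable} verbatim: for $\sigma \in U(X)$ it identifies the $\sigma$-semi-stable objects of Mukai vector $v_0$ as exactly the shifts $\ko_x[2k]$ with $x \in X$ and $k \in \IZ$. By the characterization of $U(X)$ in Proposition \ref{stab_criterion}, condition (1), each skyscraper $\ko_x$ is $\sigma$-stable, and stability is preserved under the shift functor, so every such object is in fact $\sigma$-stable. Hence no $\sigma$-semi-stable object of class $v_0$ is strictly semi-stable, which is precisely the statement that $\sigma$ is $v_0$-general.

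For the second assertion I would first dispose of general boundary points $\sigma \in \del U(X)$. By Theorem \ref{U_boundary} such a $\sigma$ is of one of the types $(A^+)$, $(A^-)$ or $(C_k)$. In the first two cases every skyscraper $\ko_x$ has two distinct stable factors ($A$ and $T_A(\ko_x)$, resp.\ $A[2]$ and $T_A^{-1}(\ko_x)$), and in the case $(C_k)$ this holds for every $x \in C$; by Remark \ref{U_boundary_rem} these factors lie on the same phase as $\ko_x$. Thus $\ko_x$ is $\sigma$-semi-stable but not $\sigma$-stable, and since $v(\ko_x)=v_0$ this exhibits $\sigma$ as not $v_0$-general. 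To pass to arbitrary boundary points I would argue by closedness and density: the locus of stability conditions in $Stab^\dagger(X)$ that fail to be $v_0$-general is the union of the walls for the primitive isotropic class $v_0$, which is locally finite (cf.\ \cite[Lem.\ 9.3, Lem.\ 11.1]{BridgelandK3}) and hence closed, while Theorem \ref{U_boundary} realizes $\del U(X)$ inside a locally finite union of real codimension-one submanifolds whose non-general points lie in the pairwise intersections and therefore form a subset of codimension $\geq 2$ in the boundary, so that the general boundary points are dense in $\del U(X)$. Combining the two facts, $\del U(X)$ is contained in the closed non-$v_0$-general locus, whence no $\sigma \in \del U(X)$ is $v_0$-general.

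The step I expect to be the main obstacle is the final one: justifying both that general boundary points are dense in $\del U(X)$ and that the non-$v_0$-general locus is genuinely closed. Density requires knowing that $\del U(X)$ is, locally near each of its points, a union of the codimension-one pieces meeting there, so that each piece contributes general points in its relative interior; closedness rests on the local finiteness of walls for the fixed class $v_0$. An alternative that sidesteps these point-set subtleties would be to fix a single destabilized $\ko_x$ and approach $\sigma$ within one wall on which that $x$ remains a witness, using that ``semi-stable but not stable'' is a closed condition for a fixed object.
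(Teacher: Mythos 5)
Your proof of the first assertion is correct and coincides with the paper's: Proposition \ref{v_0_semi_stable} identifies the semi-stable objects of class $v_0$ as shifts of skyscrapers, and Proposition \ref{stab_criterion} makes these stable. The genuine gap is in the second assertion, at exactly the step you flagged: passing from \emph{general} boundary points to all of $\del U(X)$. Theorem \ref{U_boundary} asserts only that $\del U(X)$ is \emph{contained in} a locally finite union of real codimension-one submanifolds, and describes those boundary points lying on exactly one of them; it does not assert that such points are dense in $\del U(X)$, and this is not a formal point-set consequence of the containment. For instance, the open set $U = \IR^2 \setminus \{0\}$ has boundary $\{0\}$, which is contained in the union of the two coordinate axes, yet $U$ has no boundary point lying on only one axis --- so an open set whose boundary lies in a locally finite union of walls may have \emph{no} general boundary points at all, let alone a dense set of them. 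Excluding such behaviour for $U(X)$ would require structural information (e.g.\ that $U(X)$ is locally a union of chambers cut out by the walls, so that its boundary is locally a union of closed wall pieces with dense relative interiors) which is not contained in the results you cite. The closedness of the non-$v_0$-general locus is the less problematic half (it can be extracted from the bounded-mass arguments of \cite[Prop.\ 9.3, Prop.\ 9.4]{BridgelandK3}, much as in the paper's subsequent lemma on $v$-general stability conditions), but without the density statement your argument does not close.

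The paper's proof sidesteps all of this and treats every boundary point at once; it is essentially the idea your final ``alternative'' gestures at, pushed to its conclusion. For a fixed object, semi-stability is a closed condition on $Stab(X)$, so each $\ko_x$ remains $\sigma$-semi-stable for every $\sigma \in \overline{U}(X)$. Now apply Proposition \ref{stab_criterion} in the \emph{reverse} direction: if at some $\sigma \in \overline{U}(X)$ all the $\ko_x$ were $\sigma$-stable, then --- since they share one central charge and their phases remain equal by continuity from $U(X)$, and since the central charge lies in $\kp_0^+(X)$ because $\sigma \in Stab^\dagger(X)$ (Theorem \ref{stability_theorem}) --- the criterion would force $\sigma \in U(X)$. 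Hence at every $\sigma \in \del U(X) = \overline{U}(X)\setminus U(X)$ some skyscraper sheaf is semi-stable but not stable, i.e.\ $\sigma$ is not $v_0$-general. This argument needs neither Theorem \ref{U_boundary} nor any density or wall-structure claim, which is what makes it robust at non-general boundary points.
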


\begin{proof}[Proof of Lemma]
  By Proposition \ref{v_0_semi_stable} all $\sigma$-semi-stable objects of
  Mukai vector $v_0$ are shifts of skyscraper sheaves.
  All skyscraper sheaves $\ko_x$ are $\sigma$-stable by Proposition \ref{stab_criterion}.

  For the second claim note that $\ko_x$ remains semi-stable for $\sigma \in \overline{U}(X)$. 
  If $\sigma \in \overline{U}(X)$ and all $\ko_x$ are $\sigma$-stable,
  then $\sigma \in U(X)$ by Proposition \ref{stab_criterion}.
  Hence for $\sigma \in \del U(X)$ there are strictly semi-stable skyscraper sheaves. 
  This means $\sigma$ is not $v_0$-general.
\end{proof}

\begin{lem}
  For all primitive Mukai vectors $v \in N(X)$ the set of $v$-general stability conditions 
  is dense and open in $Stab^\dagger(X)$.
\end{lem}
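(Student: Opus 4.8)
The plan is to show that failure of $v$-generality is governed by a locally finite family of real hypersurfaces (walls) in $Stab^\dagger(X)$, so that the $v$-general locus is their open, dense complement. First I would record the following clean characterization: a stability condition $\sigma$ fails to be $v$-general precisely when there exist nonzero $\sigma$-semistable objects $A,B$ with $v(A)+v(B)=v$ and $Z_\sigma(v(A))/Z_\sigma(v(B))\in\IR_{>0}$ (equal phase). The forward direction comes from taking a proper sub- and quotient object of the same phase in a strictly semistable $E$ of class $v$ (e.g.\ the first and last steps of a Jordan--H\"older grouping, as in the proof of Proposition \ref{v_0_semi_stable}); the converse is immediate, since $A\oplus B$ is then strictly semistable of class $v$. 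Writing $w=v(A)$, so $v(B)=v-w$, the key consequence of primitivity enters here: if $w$ were proportional to $v$ then $w=\lambda v$ with $\lambda\in\IZ$, while $Z_\sigma(w)$ and $Z_\sigma(v-w)$ being positive multiples of the nonzero vector $Z_\sigma(v)$ forces $0<\lambda<1$, a contradiction. Hence every such $w$ satisfies $w\not\propto v$.

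Next I would set up the walls. For a class $w\in N(X)$ with $w\not\propto v$ put
\[ W(w)=\Set{\sigma\in Stab^\dagger(X)}{Z_\sigma(w)/Z_\sigma(v)\in\IR}. \]
Since the central-charge map $\sigma\mapsto Z_\sigma$ is a local homeomorphism onto an open subset of $\Hom(N(X),\IC)$ (Theorem \ref{stability_theorem}, \cite{BridgelandStability}), and the single real equation $\mathrm{Im}(Z(w)\overline{Z(v)})=0$ is non-trivial exactly because $w\not\propto v$, the set $W(w)$ is a closed, nowhere dense real-analytic hypersurface. By the characterization above the non-$v$-general locus is contained in $\bigcup_{w}W(w)$, the union taken over classes $w=v(A)$ occurring as Mukai vectors of semistable factors of class-$v$ semistable objects. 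The crucial finiteness input is that, over any compact $K\subset Stab^\dagger(X)$, such factors have bounded mass (cf.\ the proof of \cite[Prop.\ 9.3]{BridgelandK3}), so only finitely many classes $w$ arise (cf.\ \cite[Lem.\ 9.3]{BridgelandK3}); thus $\bigcup_w W(w)$ is locally finite, closed, and nowhere dense, and its complement is open and dense. This already yields density of the $v$-general conditions.

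Finally, for openness I would upgrade the inclusion to an equality, writing the non-$v$-general locus as $\bigcup_w D(w)$ with $D(w)\subset W(w)$ the locus admitting semistable objects of both classes $w$ and $v-w$, and show this union is closed. Given $\sigma_n\to\sigma_0$ with each $\sigma_n$ non-general, bounded mass lets me fix a single $w$ with $\sigma_n\in D(w)$ for all $n$; the equal-phase condition $Z_{\sigma_n}(w)/Z_{\sigma_n}(v-w)\in\IR_{>0}$ passes to the limit, and semistable objects of classes $w$ and $v-w$ at $\sigma_0$ then produce, via their direct sum, a strictly semistable object of class $v$, whence $\sigma_0$ is non-general. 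The one nontrivial ingredient, and the step I expect to be the main obstacle, is that the locus $\Set{\sigma}{\exists\,\sigma\text{-semistable object of class }w}$ is closed in $Stab^\dagger(X)$; this is precisely where the support property and the compactness of the spaces of semistable objects (the machinery behind \cite[Sec.\ 9]{BridgelandK3}) must be invoked, whereas density required none of it. Granting this closedness, the non-general locus is closed and its complement is the open, dense set of $v$-general stability conditions.
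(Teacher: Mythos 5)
Your density argument is correct and is essentially the paper's argument with the black box opened: the paper simply applies \cite[Prop.\ 9.3]{BridgelandK3} to the bounded-mass set $S$ of class-$v$ objects that are semistable somewhere in a compact set, and your explicit walls $W(w)$, together with the observation that primitivity of $v$ forces any destabilizing class $w$ to satisfy $w\not\propto v$, is precisely the content of that proposition. Two small points there: your characterization of non-generality should say the sub- and quotient object have \emph{equal phases}, not merely $Z_\sigma(v(A))/Z_\sigma(v(B))\in\IR_{>0}$ (phases on the same ray can differ by an even integer); this is harmless because even shifts preserve Mukai vectors, so one can always correct by a shift $[2k]$ before forming the direct sum, but it should be said, both in the characterization and in the limit argument at the end.

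The openness half, however, has a genuine gap, and it is the one you flag yourself: you reduce closedness of the non-general locus to the claim that $\Set{\sigma \in Stab^\dagger(X)}{\exists\, \sigma\text{-semistable } A \text{ with } v(A)=w}$ is closed, and then write ``granting this closedness.'' Nothing you have established implies this. Continuity of central charges and bounded mass give closedness of semistability only for a \emph{fixed} object; in your sequence $\sigma_n\to\sigma_0$ the witnessing objects $A_n$ vary with $n$, and producing a limiting $\sigma_0$-semistable object of class $w$ is exactly the hard content here. It requires the deformation theory of semistable objects in bounded-mass families -- concretely, the statement that semistability persists on closures of chambers, which is what \cite[Prop.\ 9.4]{BridgelandK3} provides -- and even granting that, one must handle limit points lying on walls (e.g.\ by induction over the strata of the wall arrangement), since chamber-closure statements do not directly apply to sequences of points sitting on walls. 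So in effect you have proved density and then traded openness for a claim of at least the same depth, left unproven. The paper's proof of openness is one line: apply \cite[Prop.\ 9.4]{BridgelandK3} to the same set $S$ used for density. Your argument becomes complete if ``granting this closedness'' is replaced either by that citation or by an actual proof of your closedness claim along the lines just indicated.
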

\begin{proof}
  Choose an open subset $B^\circ$ with compact closure $B$.
  In the proof of \cite[Prop. 9.3.]{BridgelandK3} it is shown that 
  \[S=\{ E \in \cd^b(X) \,|\, E \text{ $\sigma$-semi-stable for some $\sigma \in B$}, v(E)=v \}\]
  has bounded mass. Hence \cite[Prop.\ 9.3]{BridgelandK3} applies and
  we get a wall an chamber structure on $B$ such that all objects $E
  \in S$ are stable outside a locally finite collection of walls.
  This shows the density. 
  The openness follows from  \cite[Prop.\ 9.4]{BridgelandK3} applied to $S$.
\end{proof}

\subsection{Reconstruction theorem} \label{sec:Reconstruction}

\begin{thm}\label{reconstruction_thm}
  Let $v \in N(X)$ be a standard vector and $\sigma \in Stab^\dagger(X)$ a $v$-general stability condition.
  \begin{enumerate}
  \item There exists a K3 surface $Y$ and an isomorphism of functors
    \[ \km^{\sigma}_X(v)  \isom \underline{Y} \]
    where $\underline{Y}$ is the functor $(Shm/\IC)^{op} \ra (set): T
    \mapsto Mor(T,Y)$.
  \item The Hodge structure $H^2(Y,\IZ)$ is isomorphic to the subquotient 
    of $\tilde{H}(X,\IZ)$ given by $v^\perp/ v$.
  \item The universal family $E \in \km^{\sigma}_X(v)(Y) \subset \cd^b(X \times Y)$ induces a derived 
    equivalence $\cd^b(X) \ra \cd^b(Y)$.
  \end{enumerate}
\end{thm}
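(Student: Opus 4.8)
The plan is to reduce, via Fourier--Mukai equivalences, to the moduli problem of point objects on the partner $Y=X(v)$, where representability is governed by Proposition \ref{v_0_semi_stable}. Since $v$ is standard it determines a cusp $[v]$ of $\KM(X)$ with associated K3 surface $Y=X(v)$ by Ma's Theorem \ref{Ma_Thm}. By Theorem \ref{reduction_to_lvl} there is a derived equivalence $\Phi:\cd^b(X)\sra\cd^b(Y)$ with $\Phi^H(v)=v_0=(0,0,1)\in N(Y)$ respecting the distinguished component, so $\tau:=\Phi_*\sigma\in Stab^\dagger(Y)$. Theorem \ref{moduli_under_fm} then gives $\km^\sigma_X(v)\isom\km^{\tau}_Y(v_0)$, and since $\Phi$ preserves stability, $\tau$ is $v_0$-general. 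Next, by Lemma \ref{cover_stab} there is $\Psi\in\WT(Y)$ with $\Psi_*\tau\in\overline{U}(Y)$; as every element of $\WT(Y)$ fixes $v_0$, the condition $\tau':=\Psi_*\tau$ is again $v_0$-general, hence lies in the open part $U(Y)$ by Lemma \ref{v_0_general}. A second application of Theorem \ref{moduli_under_fm} reduces all three assertions to the case of $v_0$ and a stability condition $\tau'\in U(Y)$.

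For $\tau'\in U(Y)$, Proposition \ref{v_0_semi_stable} shows that the $\tau'$-semistable objects of Mukai vector $v_0$ are precisely the shifts $\ko_y[2k]$, which modulo the relation $\sim$ are the skyscraper sheaves. The candidate universal family is $\ko_\Delta\in\cd^b(Y\times Y)$, defining a natural transformation $\underline{Y}\ra\km^{\tau'}_Y(v_0)$ by $f\mapsto(\id\times f)^*\ko_\Delta$. To see this is an isomorphism, take $E\in\km^{\tau'}_Y(v_0)(T)$; working locally on $T$ we may assume $T$ affine and connected and, by Lemma \ref{perfect_lemma}, $E$ a bounded complex of $T$-flat sheaves, with fibers $E_t\isom\ko_{y(t)}[2k_t]$. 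I would first show that $k_t$ is locally constant: the functions $\phi^i(t)=\dim_{k(t)}\IH^i(Y_t,E_t)$ equal the indicator of $\{i=-2k_t\}$ and are upper semicontinuous by Proposition \ref{semicontinuity}; since locally only finitely many are nonzero and $\sum_i\phi^i\equiv 1$, each $\phi^i$ is in fact locally constant. After an even shift, allowed by $\sim$, we may assume $E_t\isom\ko_{y(t)}$ for all $t$. A complex whose fibrewise cohomology is concentrated in degree $0$ is quasi-isomorphic to a $T$-flat sheaf $F=\kh^0(E)$, whose scheme-theoretic support is finite flat of degree one over $T$, hence the graph of a morphism $f:T\ra Y$. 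Then $F\isom(\id\times f)^*\ko_\Delta\tensor\pr_T^*L$ for some $L\in\Pic(T)$, so $E\sim(\id\times f)^*\ko_\Delta$. Uniqueness of $f$ follows by comparing supports, which proves (1) in the reduced case with universal family $\ko_\Delta$.

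Part (2) is then immediate from the identification $Y=X(v)$ together with Ma's characterisation (Remark \ref{fm_criterion}), which states that $H^2(X(v),\IZ)\isom v^\perp/v$ as a subquotient of $\HT(X,\IZ)$. For part (3), the universal family $E\in\km^\sigma_X(v)(Y)\subset\cd^b(X\times Y)$ corresponds, under the chain of isomorphisms above, to $\ko_\Delta$; unwinding, $E$ is obtained by applying the relative Fourier--Mukai transforms of the equivalences $\Phi$ and $\Psi$ to $\ko_\Delta$. Since $\ko_\Delta$ is the kernel of the identity of $\cd^b(Y)$ and these relative transforms come from honest derived equivalences, $E$ is, up to an even shift and a line-bundle twist along $Y$, the kernel of the composite equivalence, and hence induces a derived equivalence $\cd^b(X)\sra\cd^b(Y)$.

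The main obstacle is the representability step in the second paragraph, i.e.\ passing from a relatively perfect family of shifted skyscrapers to an honest $T$-flat family of length-one subschemes. Controlling the shift $k_t$ through the semicontinuity of Proposition \ref{semicontinuity}, and then verifying that fibrewise concentration in a single degree forces $T$-flatness of $\kh^0(E)$, are where the real content lies; once the family is identified with a flat family of points, the identification with $\mathrm{Hilb}^1(Y)=Y$, and hence with $\underline{Y}$, is formal.
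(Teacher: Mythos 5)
Your proposal is correct and follows essentially the same route as the paper: reduction of a general standard vector $v$ to $v_0$ via Theorem \ref{reduction_to_lvl} and Theorem \ref{moduli_under_fm}, reduction from $Stab^\dagger$ to $U(Y)$ via Lemma \ref{cover_stab} and Lemma \ref{v_0_general}, and then the base case handled by Proposition \ref{v_0_semi_stable} together with the semicontinuity argument (Proposition \ref{semicontinuity}) to fix the even shift and identify the family with a graph. The only cosmetic differences are that you perform the reductions before the base case (the paper does the reverse) and that you spell out the flatness-of-$\kh^0(E)$ step which the paper delegates to the citation of Huybrechts' Corollary 5.23.
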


\begin{proof}
  The proof consists of three steps.  First, we treat the case $v=v_0,
  \sigma \in U(X)$ and show that $\km_\sigma(v_0) \isom \underline{X}$ using
  Proposition \ref{v_0_semi_stable}.  Next we generalize to $v=v_0$
  and $\sigma \in Stab^\dagger(X)$ using Lemma \ref{cover_stab}.
  Finally the general case can be reduced to $v=v_0$ using Theorem
  \ref{reduction_to_lvl}.

{\em Step 1: Assume that $v_0=(0,0,1) \in N(X)$, and $\sigma \in U(X)$, then
\[ \km^{\sigma}(v_0) \isom \underline{X}. \] } 
Indeed, the morphism $\underline{X} \ra \km^{\sigma}_X(v_0)$ is given by  
\[ f: T \ra X \quad \mapsto \quad \ko_{\Gamma_f} \in  \km^{\sigma}_X(v_0)(T) \]
where $\Gamma_f \subset X \times T$ is the graph of $f$. We have to show
this map is an isomorphism. 

Injectivity: If we have two morphisms $f,g: T \ra X$ with $\ko_{\Gamma_f} \sim \ko_{\Gamma_g}$ 
then we claim that $f=g$. Indeed, by assumption there is a quasi-isomorphism 
\[ \ko_{\Gamma_f} \isom \ko_{\Gamma_g}[k] \tensor \pr_2^* L \qtext{ in } \cd^b(X \times T)\] 
for some $k \in 2 \IZ, L \in Pic(T)$. 
As $\ko_{\Gamma_f}$ and $ \ko_{\Gamma_g} \tensor \pr_2^* L$ are sheaves, we have $k=0$ 
and the quasi-isomorphism is an isomorphism of coherent sheaves. Moreover,
\[ L=\pr_{2 *}( \ko_{\Gamma_g} \tensor \pr_2^* L) \isom \pr_{2 *}(\ko_{\Gamma_f})=\ko_T. \]
Hence it is $\ko_{\Gamma_f}\isom \ko_{\Gamma_g}$ and it follows that $f=g$. 

Surjectivity: If $[E] \in \km^{\sigma}_X(v_0)(T)$, then $\DL i_t^* E$ is $\sigma$-stable of 
Mukai vector $v_0$. It follows from Proposition \ref{v_0_semi_stable} that $\DL i_t^* E\isom \ko_x[k]$ 
for a point $x \in X$ and $k \in 2\IZ$ depending on $t$.
Using the semi-continuity of $t \mapsto h^i(X,\DL i_t^* E)$ (Proposition  \ref{semicontinuity}) it is 
easy to see that the shift $k \in 2\IZ$ is independent of $t$ in each connected 
component $T_0 \subset T$.
Now we argue as in  \cite[Cor.\ 5.23.]{HuybrechtsFM} to see that there is a
morphism $f:T_0 \ra X$ line bundle $L \in Pic(T_0)$ such that 
$E|_{T_0} \isom \ko_{\Gamma_f} \tensor  \pr_2^* L$. 
Therefore $E \sim \ko_{\Gamma_f}$ for some $f: T \ra X$.
 
{\em Step 2. Assume that  $\sigma \in Stab^\dagger(X)$ is a $v_0$-general stability condition. 
Then $\km^{\sigma}(v_0)$ is isomorphic to $X$.\\ }
Indeed, by Lemma \ref{cover_stab} we find a $\Phi \in \WT(X)$ 
such that $\sigma'=\Phi_*( \sigma ) \in \overline{U}(X)$. Also note that we have
\[ T_{\ko_C(k)}^H(v_0) = v_0, \quad {T^2_A}^H(v_0)=v_0. \]
It follows that $v_0$-general stability conditions are mapped to $v_0$-general stability conditions.
By Lemma \ref{v_0_general} we conclude that $\sigma' \in U(X)$ and not in $\del U(X)$.

As we have seen in Theorem \ref{moduli_under_fm} the Fourier--Mukai 
equivalence $\Phi^{-1}$ induces isomorphisms of functors
\begin{align*}
  \km^{\sigma}(v_0) \isom \km^{\Phi_*\sigma}(\Phi^{H}_*v_0))  = \km^{\sigma'}(v_0) \isom \underline{X}.
\end{align*}
The last isomorphism is provided by step 1.

{\em Step 3. General case.}
Let $v$ be a standard vector and $\sigma \in Stab^\dagger(X)$ be a $v$-general stability condition.
By Theorem \ref{reduction_to_lvl} there is a K3 surface $Y$ with 
Hodge structure $H^2(Y,\IZ) \isom v^\perp/v$ and a derived 
equivalence $\Phi: \cd^b(X) \ra \cd^b(Y)$ respecting the distinguished component 
and mapping $v \in N(X)$ to $v_0 \in N(Y)$.

Since $\sigma \in Stab^\dagger(X)$ is $v$-general also $\Phi_*(\sigma) \in Stab^\dagger(Y)$ is $v_0$-general.  
By Theorem \ref{moduli_under_fm} the Fourier--Mukai transformation $\Phi: \cd^b(X) \ra \cd^b(Y)$ 
induces an isomorphism of moduli functors
$\km^{\sigma}_X(v)  \isom \km^{\Phi_* \sigma}_Y(v_0).$
Now we apply step 2 to conclude that $\km^{\Phi_* \sigma}_Y(v_0)\isom \underline{Y}$.

It remains to show that the universal family induces a derived equivalence.
This follows from the fact that, under the isomorphism  $\underline{Y} \isom \km^{\sigma}_X(v)$
the element $id_Y \in \underline{Y}(Y)$ maps to the kernel of the composition of 
the Fourier--Mukai equivalences used in the various reduction steps.
\end{proof}

\begin{rem}
  In general one expects that the moduli space $\km^{\sigma}(v)$ undergoes (birational)
  transformations, called wall-crossings
  when $\sigma$ moves in $Stab^\dagger(X)$.
  This behavior can be observed in our situation, too, but the transformations
  turn out to be isomorphisms.

  If $\sigma \in U(X)$, then $\km^\sigma(v_0)$ parametrizes the skyscraper sheaves $\ko_x, x \in X$.
  When $\sigma$ passes over a wall of type $(C_k)$, then the sheaves $\ko_x, x \in C$
  are replaced by the complexes $T_{\ko_C(k)} \ko_x$, whereas the sheaves $\ko_x, x \notin C$ 
  remain stable. 

  If $\sigma$ moves over an $(A)^{\pm}$-type wall, then all sheaves $\ko_x$ are
  replaced by the spherical twists $T_A^{\pm 2} \ko_x$. 
\end{rem}

%%% Local Variables: 
%%% mode: latex
%%% TeX-master: "Cusps"
%%% End: 

\section{Appendix: Equivalences respecting $Stab^\dagger(X)$}
\label{sec:aut_dagger}
Let $\Phi: \cd^b(X) \ra \cd^b(Y)$ be a derived equivalence between two
K3 surfaces.  Recall from section \ref{sec:Stability}, that $\Phi$
{\em respects the distinguished component} if $\Phi_* Stab^\dagger(X)=Stab^\dagger(Y)$.

As we will see, this property can be verified for most of the known equivalences.
It is expected that $Stab(X)$ is connected and therefore it should in fact hold always.

We will use the following criterion, which is an easy consequence of
\cite[Prop.\ 10.3]{BridgelandK3} cf.\ Proposition \ref{stab_criterion}.
\begin{cor}\label{stab_eq_criterion}
  Let $\Phi: \cd^b(X) \ra \cd^b(Y)$ be a derived equivalence between two K3 surfaces.
  If the objects $\Phi(\ko_x), x \in X$ are $\sigma$-stable of the same phase
  for some $\sigma \in Stab^\dagger(Y)$, then $\Phi$ preserves the 
  distinguished component.
\end{cor}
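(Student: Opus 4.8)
The plan is to reduce the assertion $\Phi_* Stab^\dagger(X)=Stab^\dagger(Y)$ to the identification of a single point. Since $\Phi$ is an equivalence, the induced map $\Phi_*$ is a homeomorphism of $Stab(X)$ onto $Stab(Y)$ carrying connected components to connected components; in particular $\Phi_* Stab^\dagger(X)$ is \emph{some} connected component of $Stab(Y)$. Hence it suffices to produce one stability condition $\tau \in Stab^\dagger(X)$ whose image $\Phi_*\tau$ lies in $Stab^\dagger(Y)$. The natural candidate is $\tau := (\Phi^{-1})_*\sigma$, the pull-back of the given $\sigma$; by functoriality of the (covariant) action one has $\Phi_*\tau = \Phi_*((\Phi^{-1})_*\sigma)=\sigma \in Stab^\dagger(Y)$ automatically, so the whole problem collapses to checking that $\tau$ itself lies in the distinguished component of $X$.

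To see this I would show $\tau \in U(X)$ by verifying the two conditions of Proposition \ref{stab_criterion}, after which $\tau \in U(X)\subset Stab^\dagger(X)$ as in Remark \ref{stab_diagram}. For condition (1), note that because $\Phi_*\tau=\sigma$, an object $E$ on $\cd^b(X)$ is $\tau$-(semi)stable precisely when $\Phi(E)$ is $\sigma$-(semi)stable; applying this to $E=\ko_x$ and using the hypothesis that the $\Phi(\ko_x)$ are $\sigma$-stable of one common phase shows that all skyscrapers $\ko_x$ are $\tau$-stable of the same phase. For condition (2), the central charge of $\tau$ is $(\Phi^H)^{-1}(z_\sigma)$, where $z_\sigma$ denotes the central charge of $\sigma$. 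Since $\sigma \in Stab^\dagger(Y)$, Theorem \ref{stability_theorem} gives $z_\sigma = \pi(\sigma) \in \kp_0^+(Y) \subset \kp_0(Y)$. The map $(\Phi^H)^{-1}\colon N(Y)_\IC \to N(X)_\IC$ is induced by an isometry, so it permutes the roots $\Delta$ and therefore preserves the complement $\kp_0$ of the root divisors $D(\delta)$; thus $(\Phi^H)^{-1}(z_\sigma) \in \kp_0(X)$, which is exactly condition (2).

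With both conditions in hand, Proposition \ref{stab_criterion} yields $\tau \in U(X) \subset Stab^\dagger(X)$, and since $\Phi_*\tau = \sigma \in Stab^\dagger(Y)$ the component $\Phi_* Stab^\dagger(X)$ meets $Stab^\dagger(Y)$; being connected components, the two must coincide, i.e. $\Phi$ respects the distinguished component. The argument is essentially formal once $\tau$ is set up; the only step requiring a little care is condition (2) — one must remember that it is precisely the hypothesis $\sigma \in Stab^\dagger(Y)$ that places $z_\sigma$ off the root divisors, and that a Hodge isometry preserves this locus, so that no further orientation bookkeeping beyond what Proposition \ref{stab_criterion} already encodes is needed.
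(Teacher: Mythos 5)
Your proposal is correct and takes essentially the same route as the paper: the paper offers no written-out proof, stating the corollary as an ``easy consequence'' of Proposition \ref{stab_criterion}, and your argument --- pulling back to $\tau=(\Phi^{-1})_*\sigma$, checking condition (1) via the hypothesis on the $\Phi(\ko_x)$ and condition (2) via Theorem \ref{stability_theorem} together with the fact that the Hodge isometry preserves roots and positive two-planes, then matching connected components --- is exactly that consequence spelled out.
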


As a direct application we find the following lemma.
\begin{lem}\label{easy_respect}
  The equivalences of derived categories listed below respect the distinguished 
  component of the stability manifold.
  \begin{itemize}
  \item {Shifts:} $[1]: A \mapsto A[1]$
  \item {Isomorphisms:} For $f: X \isom Y$, the functor $f_*: A \mapsto f_* A$
  \item {Line bundle twists:} For $L \in Pic(X)$, the functor $A \mapsto L \tensor A$
  \end{itemize}
\end{lem}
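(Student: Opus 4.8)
The plan is to verify in each case the hypothesis of Corollary \ref{stab_eq_criterion}: that is, to exhibit a stability condition $\sigma$ in the distinguished component of the \emph{target} category under which the images of all skyscraper sheaves $\ko_x$ are stable of a common phase. The crucial input is Proposition \ref{stab_criterion}, which guarantees that for any $\sigma \in U(X) \subset Stab^\dagger(X)$ the skyscrapers $\ko_x$, $x \in X$, are $\sigma$-stable of the same phase. Since in all three listed cases the equivalence carries the family $\{\ko_x\}$ to (a shift of) a full family of skyscrapers, the criterion should be met essentially by inspection.

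Concretely, for an isomorphism $f : X \isom Y$ the functor $f_*$ sends $\ko_x$ to the skyscraper $\ko_{f(x)}$ on $Y$, and as $x$ ranges over $X$ the points $f(x)$ range over all of $Y$. I would therefore pick any $\sigma \in U(Y) \subset Stab^\dagger(Y)$ and invoke Proposition \ref{stab_criterion} to conclude that every $f_*(\ko_x) = \ko_{f(x)}$ is $\sigma$-stable of the same phase, so that Corollary \ref{stab_eq_criterion} applies. For a line bundle twist $A \mapsto L \tensor A$ with $L \in Pic(X)$, one has $L \tensor \ko_x \isom \ko_x$, since $L$ is locally free of rank one and its fibre at $x$ is one-dimensional; thus the functor fixes every skyscraper, and choosing any $\sigma \in U(X)$ makes the hypothesis immediate. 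For the shift $[1]$, note that $[1](\ko_x) = \ko_x[1]$; since shifting preserves (semi)stability and raises the phase uniformly by one, the objects $\ko_x[1]$ are, for any $\sigma \in U(X)$, stable of the common phase $\phi + 1$, where $\phi$ is the common phase of the $\ko_x$. Hence Corollary \ref{stab_eq_criterion} applies once more.

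I do not expect a genuine obstacle here, as each reduces to the already-established stability of skyscrapers in $U(X)$. The only point meriting a word of care is the shift: one must observe that the corresponding $\GLT$-action (under which $[1]$ corresponds to $\Sigma_1$, cf.\ Example \ref{shift_example}) alters all phases uniformly while preserving stability, so the common-phase condition of Corollary \ref{stab_eq_criterion} is retained rather than destroyed.
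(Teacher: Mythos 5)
Your proof is correct and is exactly the paper's argument: the paper derives this lemma as a direct application of Corollary \ref{stab_eq_criterion}, with the same three observations ($f_*\ko_x = \ko_{f(x)}$, $L \tensor \ko_x \isom \ko_x$, and $\ko_x[1]$ stable of phase $\phi+1$) left implicit. Your spelled-out verification, including the care about the uniform phase shift for $[1]$, fills in precisely what the paper takes as immediate.
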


Before we can deal with more interesting auto-equivalences, we need a digression
on stability conditions.

\subsection{Large volume limit}\label{sec:limit_stable_objects}
Following \cite[Prop.\ 14.2]{BridgelandK3} we will show that families
of Gieseker-stable sheaves give rise to families of $\sigma$-stable
objects in stability conditions $\sigma$ near the large volume limit.

Let $M$ be a quasi-compact scheme over $\IC$. Denote by $i_m: X \ra M \times X$ the inclusion of 
the fiber over $m \in M(\IC)$.
For a sheaf $E \in Coh(X \times M)$ denote by $E_m$ the restriction $i_m^* E \in Coh(X)$
to the fiber over $m$.
\begin{prop}\label{StableFamilies}
  Let $h \in NS(X)$ be an ample class. 
  Let $\curly{E} \in Coh(M \times X)$ be an $M$-flat family of Gieseker-stable sheaves of fixed Mukai 
  vector $v(E_m)=v \in N(X)$. Assume that $r(E_m)>0$ and $\mu(E_m)=\mu_h(E_m) > 0$.

  Then there exists a $n_0 \geq 1$ such that
  the objects $E_m, m\in M(\IC)$ are stable with respect to the stability condition 
  $\sigma(0, n h ) \in V(X)$ for all $n \geq  n_0$. 
\end{prop}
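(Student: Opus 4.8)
The plan is to exploit the description of the heart $\ka(0,n h)$ and the central charge $Z_{\sigma(0,nh)}$ in Bridgeland's construction, and to compare the slope-like ordering coming from Gieseker-stability with the ordering coming from the phase function of $\sigma(0,nh)$ as $n \to \infty$. Since $r(E_m)>0$ and $\mu_h(E_m)>0$, each $E_m$ is a torsion-free sheaf whose Harder--Narasimhan slopes all lie above $\beta.\omega = 0$, so $E_m \in \kt$ and hence $E_m \in \ka(0,nh)$ for every $n$. Thus it suffices to prove stability \emph{inside the heart}, with respect to the phase induced by $Z_{\sigma(0,nh)}$, for $n \gg 0$. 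First I would write out $Z_{\sigma(0,nh)}(w) = w.Exp(i\,nh)$ explicitly for $w=(r,l,s)$: the imaginary part is $n\,(l.h)$ and the real part is $-s + \tfrac12 n^2 (h^2) r$, using $\omega=nh$, $\beta=0$. The key feature is that, after dividing by $n$, the imaginary part is governed by the slope $\mu_h$, while the leading real-part term scales like $n^2 r$; this is exactly the regime in which the phase of a positive-rank object is controlled to leading order by its $\mu_h$-slope.

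The central step is a boundedness-and-contradiction argument in the spirit of \cite[Prop.\ 14.2]{BridgelandK3}. Suppose, for contradiction, that for a sequence $n_j \to \infty$ some $E_{m_j}$ is destabilized in $\ka(0,n_j h)$ by a subobject $A_j \hookrightarrow E_{m_j}$ with $\phi_{n_j}(A_j) \geq \phi_{n_j}(E_{m_j})$. Because $E_m$ ranges over a fixed $M$-flat family with $M$ quasi-compact, the possible Mukai vectors $v(A_j)$ and quotient vectors are constrained: the family of $E_m$ has bounded mass, and one can bound the ranks, degrees, and discriminants of destabilizing subobjects using the fact that the $E_m$ are $\mu_h$-semistable (indeed Gieseker-stable) sheaves of fixed $v$. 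I would therefore argue that the set of Mukai vectors $v(A_j)$ arising as $\sigma(0,n h)$-subobjects of members of the family, for $n$ in a fixed range, is finite (again via the bounded-mass results \cite[Lem.\ 9.3, Prop.\ 9.3]{BridgelandK3}). Passing to a subsequence, $v(A_j)=w=(r',l',s')$ is constant.

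With $w$ fixed, I would do the asymptotic phase comparison. If $r'>0$, compare $\phi_{n}(w)$ with $\phi_n(v)$ to leading order in $n$: the imaginary parts scale like $n(l'.h)$ versus $n(l.h)$, and Gieseker-stability of $E_m$ forces $\mu_h(A) < \mu_h(E_m)$ (or equality with a strict inequality in the next invariant), which translates, after the $n\to\infty$ expansion of $\arctan(\mathrm{Im}/\mathrm{Re})$, into $\phi_n(A_j) < \phi_n(E_{m_j})$ for $n \gg 0$, contradicting the destabilizing inequality; the subtlety is that when $\mu_h(A)=\mu_h(E_m)$ the phases agree to first order, and one must pass to the second-order term, where Gieseker (as opposed to merely slope) stability via the $s$-component saves the comparison. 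The remaining cases are $r'=0$ with $l'\neq 0$ (then $A_j$ is supported in dimension $\leq 1$ and its phase tends to $1$ while a positive-rank torsion-free $E_m$ has phase tending to $1/2$, again a contradiction for large $n$), and $r'=0, l'=0$ (then $A_j$ has phase exactly $1$, likewise incompatible). The main obstacle I anticipate is making the finiteness of destabilizing vectors genuinely uniform over the whole family simultaneously with the $n$-range, and handling the equal-slope case cleanly; both are manageable because $M$ is quasi-compact and $v$ is fixed, so all the relevant invariants live in a compact set and the choice of a single $n_0$ works for all $m \in M(\IC)$.
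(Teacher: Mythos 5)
Your overall skeleton (the heart $\ka(0,nh)$ is independent of $n$, the $E_m$ lie in it, and one compares phases of subobjects asymptotically, with Gieseker stability entering in the equal-slope case) matches the paper's, but the step carrying all the difficulty is unjustified. You obtain uniformity in $n$ and $m$ by claiming that the Mukai vectors of destabilizing subobjects form a finite set, citing \cite[Lem.\ 9.3, Prop.\ 9.3]{BridgelandK3}. Those results require the stability conditions to range over a \emph{compact} subset $B \subset Stab(X)$ and the objects to have bounded mass there; your contradiction sequence lives in $\sigma(0,n_j h)$ with $n_j \to \infty$, which leaves every compact subset of $Stab(X)$ (its image in $\pd_0^+(X)$ converges to the cusp $[v_0]$), so there is no $B$ to which they apply. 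This is not a cosmetic problem: writing the destabilizing condition $\phi_n(A)\ge\phi_n(E)$ as $Im\bigl(Z_n(A)\overline{Z_n(E)}\bigr)\ge 0$ gives
\[ \tfrac12\, n^2h^2\bigl(r(E)\,h.c_1(A)-r(A)\,h.c_1(E)\bigr)\;\ge\; s(E)\,h.c_1(A)-s(A)\,h.c_1(E), \]
which is satisfied by classes with $\mu(A)<\mu(E)$ as soon as $s(A)$ is of order $n^2$; moreover, subobjects of $E_m$ in the heart need not have bounded rank, since the sheaf kernel $D\in\kf$ of $A\to E_m$ can be large. So finiteness of the realized destabilizing classes is essentially equivalent to an a priori upper bound on $\nu(A)=s(A)/r(A)$ over all heart-subobjects of all $E_m$ --- and that bound is exactly what the paper proves by hand (via $\chi(A)\le h^0(A)$, the inductive estimate $h^0(D)/r(D)\le 1$ for the kernel, and semicontinuity of $h^0$ over the quasi-compact $M$), combined with a uniform lower bound $\mu(E)-\mu(A)>C>0$ coming from integrality of $h$-degrees. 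The paper thus replaces your appeal to wall-finiteness, which is unavailable in this non-compact regime, by direct cohomological estimates; your proposal assumes away the crux.

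Two further points would need repair even if finiteness were granted. First, Gieseker stability constrains \emph{subsheaves}, whereas a subobject $A\hookrightarrow E_m$ in $\ka(0,h)$ is only a sheaf map with kernel $D\in\kf$; you need the sequences $0\to D\to A\to E'\to 0$ and $0\to E'\to E_m\to B\to 0$ to conclude that either $D=0$ (then $A$ is a genuine subsheaf and, in the equal-slope case, Gieseker stability gives $\nu(A)<\nu(E)$) or else $\mu(A)<\mu(E')\le\mu(E)$ strictly. Second, your $r'=0$ case is backwards: a subobject supported on a curve has $Z_n(A)=-s'+i\,n\,h.c_1(A)$, hence phase tending to $1/2$, while $E_m$ has phase tending to $0$, so the asymptotic phase comparison cannot exclude it; what excludes it is that it cannot exist at all --- its sheaf kernel would be a torsion sheaf lying in $\kf$, hence zero, making $A$ a torsion subsheaf of the torsion-free sheaf $E_m$.
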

\begin{proof}
  We will go through Bridgeland's arguments for the case of a
  single Gieseker-semi-stable sheaf and verify that they suffice to cover our
  situation.  Our presentation fills in some details which were not
  explicitly mentioned in \cite{BridgelandK3}.

  We first note that the heart $\ka(0,n h)$ is independent of $n$
  and the objects $E=E_m$ lie in the heart $\ka(0,n h)$.

  Suppose $0 \neq A \ra E$ is a proper sub-object of $E$  in $\ka(0,h)$.
  We have the following formula
  \begin{align}\label{Dformula} \frac{Z_n(E)}{r(E)} - \frac{Z_n(A)}{r(A)} 
    = - (\nu(E)-\nu(A)) + i n (\mu(E)-\mu(A)) =: \Delta_n,
  \end{align}
  where $\nu(A)=s(A)/r(A)$.

  The inequalities $\arg(Z_n (A)) < \arg( Z_n (E))$ and
  $\arg(Z_n(E)) < \arg(\Delta_n)$ are equivalent\footnote{
  We use the convention that, for $z \in \IC^{*}$ the argument $arg(z)$
  is the unique real number in $[0,2)$ such that $z=r\, exp(i \pi\, arg(z))$ for
  some $r \in \IR_{>0}$.}.
  Note that we have $Z_n(E) \in \IH$ by the assumption $\mu(E)>0$, if also $\Delta_n \in \IH$,
  then $\arg(Z_n(E)) < \arg(\Delta_n)$ is equivalent to
  \[ \frac{Re(Z_n(E))}{Im(Z_n(E))} > \frac{Re (\Delta_n)}{Im (\Delta_n)} = 
  -\frac{\nu(E) - \nu(A) }{n(\mu(E)-\mu(A))}.  \tag{\#} \]
  
  We claim that $\Delta_n \in \IH$ unless $A$ is a subsheaf
  and $\mu(A)=\mu(E)$, in which case $\Delta_n \in \IR_{<0}$.
  
  Indeed, consider the exact sequence of cohomology sheaves:
  \begin{align}\label{DAEB} 0 \lra D \lra A \lra E \lra B \lra 0 \end{align}
  where $D = \mathcal{H}^{-1}( Cone(A \ra E) )$ and 
  $B = \mathcal{H}^{0}( Cone(A \ra E) )$.
  Note that, $A=\mathcal{H}^{0}(A)$ since $\mathcal{H}^{-1}(A)=\mathcal{H}^{-1}(E)=0$.
  Let $E' \subset E$ be the image of $A \ra E$. We get short exact sequences
  \begin{align}\label{ADEEB} 0 \lra D \lra A \lra E' \lra 0 \qtext{and} 0 \lra E' \lra E \lra B \lra 0.
  \end{align}
  If $D=0$, then $A \ra E$ is a subsheaf and $Z_n(A) \in \IH$ unless $\mu(A)=\mu(E)$.
  If $D \neq 0$, then we have $\mu(D) \leq 0  < \mu(A)$ and therefore also $\mu(A) < \mu(E')$. 
  Hence $\mu(A) < \mu(E') \leq \mu(E)$ by stability of $E$. This shows the claim.

  In the case $\Delta_n \in \IR_{<0}$ we always have $\arg(Z_n E) < \arg(\Delta_n)=1$.
  Thus we may exclude this case henceforth.
  
  Now, the quotient $Re(Z_n(E))/Im(Z_n(E))$ converges to $+\infty$ for $n \ra \infty$.
  Hence it suffices to bound $-(\nu(E) - \nu(A))/n(\mu(E)-\mu(A))$ form above.
  The numerator can be bounded using the following lemma proved below.
  \begin{lem}
    The set \[ \{ \nu(A) \,|\, A \ra E_m \; \text{sub-object in}\; \ka(0,h), \; m \in M(\IC) \} \]  
    is bounded from above. 
  \end{lem}
  It remains to find a constant $C$ such that $\mu(E)-\mu(A)>C>0$. \\
  Case $\mu(E') < \mu(E)$: As $r(E') \leq r(E)$ we have $\mu(E)-\mu(E')> 1 / r(E)^2$
  and since $\mu(A) \leq \mu(E')$ the same bound holds for $\mu(E)-\mu(A)$ as well. \\
  Case $\mu(E') = \mu(E)$: 
  If $D=0$, then $A \ra E$ is a subsheaf and again $\mu(E)-\mu(A) > 1 / r(E)^2$.
  If $D\neq 0$, then the exact sequence (\ref{ADEEB}) and  $\mu(D)\leq 0$ shows that
  \begin{align*}
    \mu(A)= \mu(D) \frac{r(D)}{r(E')+r(D)} + \mu(E') \frac{r(E')}{r(E')+r(D)} \leq \mu(E) \frac{r(E)}{r(E)+1}.
  \end{align*}
  As $r(E)=r(E_m)$ is independent of $m$ we get a uniform bound.
\end{proof}

\begin{proof}[Proof of Lemma]
  Recall that if $v(A)=(r,l,s)$, then $\nu(A)=s/r$. 
  The Euler-characteristic of $A$ is computed as
  \[ h^0(A)-h^1(A)+h^2(A)=\chi(A)=\chi(\ko_X,A)=-(1,0,1).(r,l,s)=r+s  \]
  hence it suffices to bound $\chi(A)/r$ from above. 
  As $\mu^{min}(A) > 0 = \mu(\ko_X)$ we have $Hom(A,\ko_X)=H^2(A)=0$.
  Therefore $\chi(A)/r \leq h^0(A)/r$.
  The long exact sequence
  \[ 0 \lra H^0(D) \lra H^0(A) \lra H^0(E') \lra H^1(D) \lra \dots  \]
  shows that $h^0(A) \leq h^0(D) + h^0(E')$.
  Moreover, $h^0(E') \leq h^0(E)$ and $h^0(E)=h^0(E_m)$ is bounded uniformly in
  $m \in M(\IC)$, since $h^0(E_m)$ is semi-continuous and $M$ quasi-compact.
  In the case $D=0$ we are done. Let now $D \neq 0$.
  Note that $r=r(A) \geq r(D)$, and hence $h^0(D)/r(A) \leq h^0(D)/r(D)$.
  Therefore it suffices to bound 
  \[ \Set{h^0(D)/r(D)}{ D = \curly{H}^{-1}( Cone(A \ra E_m)), m \in M(\IC)  }. \]
  {\em Claim: $h^0(D)/r(D) \leq 1$ for all $D \neq 0$ torsion
    free with $\mu^{min}(D) \leq 0$.}

  Let $0 \neq s \in H^0(D)$ be a section. As in \cite[Lem.\ 14.3]{BridgelandK3} 
  we show that $s: \ko_X \ra D$ is injective and that $Q=coker(s)$
  is torsion free. Note that also $\mu^{min}(Q) \geq \mu^{min}(D) \geq 0$ unless $Q=0$.
  Hence $Q$ fulfills the same assumptions as $D$ and we can apply induction on $r(D)$.\\
  If $r(D)=1$, then $h^0(D)=0$ as $D$ has no non-trivial
  torsion-free quotients. \\
  If $r(D)> 1$, then $h^0(D)=h^0(Q)+1$, since $h^1(X,\ko_X)=0$, and therefore
  \[ \frac{h^0(D)}{r(D)} = \frac{h^0(Q)+1}{r(Q)+1} \leq 1 \]
  by induction hypothesis.
\end{proof}

\subsection{Moduli spaces and spherical twists}
A very important class of derived equivalences between K3 surfaces is provided by
moduli spaces of sheaves. 

\begin{prop}\label{sheaf_respect} 
  Let $M=M_h(v)$ be a fine, compact, two-dimensional moduli-space of
  Gieseker-stable sheaves on $X$ and $\Phi: \cd^b(M) \sra \cd^b(X)$ the
  Fourier--Mukai equivalence induced by the universal family (cf. \cite[Sec.\ 10.2]{HuybrechtsFM}). \\
  Then $\Phi$ respects the distinguished component.
\end{prop}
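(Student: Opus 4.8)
The plan is to apply the stability criterion of Corollary~\ref{stab_eq_criterion} to the equivalence $\Phi\colon \cd^b(M) \to \cd^b(X)$. By that criterion it suffices to exhibit a single $\sigma \in Stab^\dagger(X)$ for which all objects $\Phi(\ko_m)$, $m \in M(\IC)$, are $\sigma$-stable of one common phase. First I would identify these objects. If $\ke \in Coh(M \times X)$ is the universal family whose Fourier--Mukai transform is $\Phi$, then for a skyscraper $\ko_m$ the transform computes $\Phi(\ko_m) \isom \ke_m = E_m$, the $h$-Gieseker-stable sheaf on $X$ parametrized by $m$. Thus $\{\Phi(\ko_m)\}_{m}$ is precisely the tautological $M$-flat family of Gieseker-stable sheaves of a fixed Mukai vector $v$.

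With this identification in hand, the natural tool is Proposition~\ref{StableFamilies}: for $n \gg 0$ every member $E_m$ of such a family is stable with respect to $\sigma(0, nh) \in V(X) \subset Stab^\dagger(X)$. Since all $E_m$ share the Mukai vector $v$, their central charges $Z(E_m)$ coincide, so they are stable of one common phase. Plugging $\sigma = \sigma(0,nh)$ into Corollary~\ref{stab_eq_criterion} then gives that $\Phi$ respects the distinguished component.

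The hard part will be to secure the positivity hypotheses $r(E_m) > 0$ and $\mu_h(E_m) > 0$ that Proposition~\ref{StableFamilies} requires. To arrange the slope I would compose $\Phi$ with the twist $(- \tensor \ko_X(kh))$; by Lemma~\ref{easy_respect} this does not change whether $\Phi$ respects $Stab^\dagger(X)$, it replaces each $E_m$ by $E_m \tensor \ko_X(kh)$ --- again a flat family of $h$-Gieseker-stable sheaves, universal over an isomorphic copy of $M$ --- and it raises the slope to $\mu_h(E_m) + k\,h^2$, which is positive for $k \gg 0$ while leaving the rank unchanged. The genuinely delicate case is therefore $r(E_m) = 0$, that of a fine moduli space of pure one-dimensional sheaves, where twisting cannot produce positive rank and Proposition~\ref{StableFamilies} does not apply verbatim. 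I expect this to require a separate large-volume-limit argument for pure one-dimensional Gieseker-stable sheaves --- whose central charges $l.\beta - s + i\, l.\omega$ lie in $\IH$ with phase tending to $0$ as $\omega \to \infty$ --- or else a preliminary reduction of the isotropic vector $v$ to positive rank by a Fourier--Mukai transform already known to respect $Stab^\dagger(X)$.
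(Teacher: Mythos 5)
Your proposal is correct and is essentially the paper's own proof: identify $\Phi(\ko_m)$ with the Gieseker-stable sheaf $E_m$, twist by a large power of the ample line bundle $\ko(1)$ with $c_1(\ko(1))=h$ to arrange $\mu_h>0$ (harmless by Lemma~\ref{easy_respect}), then apply Proposition~\ref{StableFamilies} to get stability of all $E_m$ in $\sigma(0,nh)$ for $n\gg 0$, note they share a phase since they all lie in the heart $\ka(0,h)$ with the same Mukai vector, and conclude via Corollary~\ref{stab_eq_criterion}. The rank-zero case you flag as the ``hard part'' is a genuine restriction, but the paper does not handle it either: its proof reads ``tensoring with a large power of $\ko(1)$ we reduce to the case $\mu(E)>0$,'' which tacitly presumes $r(E_m)>0$, and in all of the paper's applications (Propositions~\ref{surj_of_aut} and~\ref{reduction_to_lvl}) only positive-rank moduli spaces occur, so your positive-rank argument matches the full extent of what the paper actually proves.
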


\begin{proof}
  Let $\ko(1)$ be the ample line bundle with $c_1(\ko(1))=h$.
  Tensoring with a large power of $\ko(1)$ we reduce to the case $\mu(E)>0$.
  Now Proposition \ref{StableFamilies} applies and there is an $n>0$ 
  such that all the sheaves $E_m, m\in M(\IC)$ are stable
  in the stability condition $\sigma(0,n h)\in U(X)$.
  As all sheaves $E_m$ lie in the heart $\ka(0,h)$ they have the same phase.
  Hence the proposition follows from Corollary \ref{stab_eq_criterion}.
\end{proof}

\begin{prop}\label{spherical_respect}
  Let $A$ be a spherical vector bundle, which is Gieseker-stable with
  respect to  an ample class $h \in NS(X)$. \\
  Then the spherical twist $T_A: \cd^b(X) \ra \cd^b(X)$ respects the
  distinguished component.
\end{prop}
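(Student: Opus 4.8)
The plan is to apply the criterion of Corollary \ref{stab_eq_criterion} to $\Phi = T_A$, that is, to produce a single $\sigma \in Stab^\dagger(X)$ for which all the objects $T_A(\ko_x)$, $x \in X$, are $\sigma$-stable of one common phase. First I would identify these objects. Since $A$ is locally free of rank $r=\rk A$, one has $\Hom^\bullet(A,\ko_x) = \Ext^0(A,\ko_x) = A_x\dual \isom \IC^r$ in degree zero, so the defining triangle of the spherical twist becomes
\[ A \tensor A_x\dual \xra{ev} \ko_x \lra T_A(\ko_x). \]
As $ev$ is a surjection of coherent sheaves, $T_A(\ko_x) \isom K_x[1]$, where $K_x = \ker(ev)$ fits into $0 \to K_x \to A^{\oplus r} \to \ko_x \to 0$ and has Mukai vector $v(K_x) = r\, v(A) - v_0 = (r^2, r c_1(A), r s(A)-1)$, independent of $x$. (Note $v(K_x)^2 = r^2 v(A)^2 + 2r^2 = 0$, so these are sheaves with isotropic Mukai vector.) Relativising the same construction over the first factor of $X \times X$, with $A_x\dual$ replaced by $p_1^* A\dual$ and $\ko_x$ by $\ko_\Delta$, produces a complex whose degree $-1$ cohomology is an $X$-flat family $\curly{K}$ with $\curly{K}_x \isom K_x$; flatness holds because $p_2^* A \tensor p_1^* A\dual$ and $\ko_\Delta$ are both flat over the base.

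The heart of the matter, and the step where I expect the real difficulty, is to show that every $K_x$ is Gieseker-stable with respect to $h$. Its slope equals $\mu_h(A)$, so $K_x$ is at best strictly slope-semistable and only Gieseker stability can hold; here the sphericity of $A$ is decisive. I would argue as follows. Let $0 \neq S \subsetneq K_x$ be a saturated subsheaf. If $\mu_h(S) < \mu_h(A)$, the reduced Hilbert polynomials already satisfy $p_S \prec p_{K_x}$ in the linear coefficient, so I may assume $\mu_h(S) = \mu_h(A)$. Since $A$ is stable, $A^{\oplus r}$ is semistable with all Jordan--Hölder factors isomorphic to $A$, so the saturation $\bar S$ of $S$ inside $A^{\oplus r}$ is an iterated self-extension of $A$; because $\Ext^1(A,A)=0$ this splits and $\bar S \isom A \tensor W$ for a subspace $W \subseteq A_x\dual$. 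As $S$ is saturated in $K_x$ one gets $S = \bar S \cap K_x = \ker(A \tensor W \xra{ev} \ko_x)$, and a Riemann--Roch computation yields $p_S = p_A - \tfrac{2}{(\dim W)\, r\, h^2}$ against $p_{K_x} = p_A - \tfrac{2}{r^2 h^2}$. Since a proper $S$ forces $\dim W < r$, I conclude $p_S \prec p_{K_x}$, establishing the stability of $K_x$.

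With this in hand the conclusion follows from results already available. Tensoring the family $\curly{K}$ with a sufficiently positive power $\ko(m)$ of the polarisation makes the slopes positive while preserving Gieseker stability and flatness, so Proposition \ref{StableFamilies} provides an $n_0$ with all $K_x \tensor \ko(m)$ being $\sigma(0,nh)$-stable for $n \geq n_0$. These lie in the heart $\ka(0,nh)$ and share one Mukai vector, hence one central charge and one phase. Undoing the twist by the line-bundle autoequivalence, which respects the component by Lemma \ref{easy_respect}, shows that the $K_x$, and therefore the $T_A(\ko_x) = K_x[1]$, are $\sigma$-stable of a common phase for $\sigma = (\ko(-m) \tensor -)_* \sigma(0,nh) \in Stab^\dagger(X)$. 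Corollary \ref{stab_eq_criterion} then gives that $T_A$ respects the distinguished component. The only genuinely delicate point is the Gieseker stability of $K_x$; everything else reduces cleanly to the family statement of Proposition \ref{StableFamilies} and the twisting already used in Proposition \ref{sheaf_respect}.
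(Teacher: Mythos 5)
Your overall strategy is sound and runs parallel to the paper's: you identify $T_A(\ko_x) \isom K_x[1]$ with $K_x = \ker(A \tensor A_x^\vee \ra \ko_x)$, organise these kernels into an $X$-flat family, twist by $\ko(m)$ to make slopes positive, and feed the family into Proposition \ref{StableFamilies} and Corollary \ref{stab_eq_criterion}. The paper does essentially the same, except it packages the final steps by observing that the family exhibits $X$ as the fine moduli space $M_h(v(T_A\ko_x))$ and then quotes Proposition \ref{sheaf_respect}. The real divergence is exactly at the point you call the heart of the matter: for Gieseker-stability of the $K_x$ the paper gives no argument but cites Mukai \cite[Rem.\ 3.11]{Mukai1987}, whereas you attempt a proof, and that proof has a genuine gap.

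The problematic step is the claim that a saturated subsheaf $\bar S \subset A^{\oplus r}$ with $\mu_h(\bar S) = \mu_h(A)$ must be an iterated self-extension of $A$, hence of the form $A \tensor W$. Jordan--H\"older theory for Gieseker stability controls subobjects with the same \emph{reduced Hilbert polynomial}; equality of slopes only gives $p_{\bar S} \preceq p_A$, so $\bar S$ need not be assembled from the Jordan--H\"older factors of $A^{\oplus r}$ at all. Worse, the claim is false under the stated hypotheses, because a Gieseker-stable spherical bundle need not be slope-stable. For example, take $NS(X) = \IZ h \vsum \IZ D$ with $h^2 = 2$, $h.D = 0$, $D^2 = -6$ (such a K3 exists, $h$ ample since this lattice contains no $(-2)$-classes), and let $A$ be the unique non-split extension $0 \ra \ko_X(D) \ra A \ra \ko_X \ra 0$, which exists since $h^1(\ko_X(D))=1$. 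One checks $v(A)^2=-2$, that $A$ is simple, rigid and Gieseker-stable with respect to $h$, yet $\mu_h(\ko_X(D)) = 0 = \mu_h(A)$, so $A$ is strictly slope-semistable. Then $\bar S = \ko_X(D) \vsum A \subset A^{\oplus 2}$ is saturated (quotient $\ko_X$) of slope $\mu_h(A)$, but it has rank $3$, hence is not of the form $A \tensor W$ and not an iterated self-extension of $A$. Consequently your case analysis omits saturated subsheaves $S \subset K_x$ of equal slope that are not of the form $\ker(A \tensor W \ra \ko_x)$ --- in particular any such $S$ contained entirely in $K_x$, for which the ``$-1$'' in your Hilbert-polynomial count is absent --- and stability of $K_x$ is not established for such $A$. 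Even when $A$ \emph{is} slope-stable your one-line justification is too quick: slope-Jordan--H\"older factors are determined only up to modifications in codimension two, so one still needs saturatedness together with the vanishing $\Ext^1(T,F)=0$ for $T$ zero-dimensional and $F$ locally free to force the factors to be honest copies of $A$, and only then do $\Ext^1(A,A)=0$ and simplicity give $\bar S = A \tensor W$. The clean way out is the paper's: quote Mukai's stability statement, which covers Gieseker-stable $A$; the rest of your argument then goes through as written.
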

\begin{proof}
  The spherical twist functor has Fourier--Mukai kernel
  \[ \kp = Cone(pr_1^* A\dual \tensor pr_2^* A \xlra{tr} \ko_{\Delta}) \in \cd^b(X \times X) \]
  cf.\ \cite[Def.\ 8.3]{HuybrechtsFM}.
  Let $i_x: X \ra \{x\} \times X \subset X \times X$ be the inclusion of the fiber.
  Since $i_x^*(\kp)[1]=T_A(\ko_x)[1]=:E_x$ is a sheaf, the complex $\kp[1]$
  is quasi isomorphic to a sheaf which is flat along $\pr_1$.

  Mukai shows in \cite[Rem.\ 3.11.]{Mukai1987} that the sheaves $E_x$ are Gieseker-stable.
  Therefore $\kp[1]$ induces a map $f:X \ra M_h(v_1)$, where $v_1=v(T_A(\ko_x))$,
  which is easily seen to be an isomorphism. 
  Hence $T_A$ is a special case of a Fourier--Mukai
  transformation at a moduli space of Gieseker-stable sheaves and Proposition \ref{sheaf_respect} applies.
\end{proof}

Using Bridgeland's description the boundary of $U(X)$ in \cite[Thm.\ 12.1]{BridgelandK3} (cf.\ Theorem \ref{U_boundary}) we can also show that spherical twists along torsion sheaves do respect 
the distinguished component.

\begin{prop}\label{boundary_lemma}\label{curve_respect}
  Let $C$ be a $(-2)$-curve on a K3 surface $X$ and $k \in \IZ$, then
  the spherical twist $T_{\ko_C(k)}$ does respect the distinguished component.
\end{prop}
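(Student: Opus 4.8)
The plan is to deduce this from Remark \ref{WT_Dagger}: that remark already records that every generator of $\WT(X)$ respects the distinguished component, and the generators are precisely the twists $T_{\ko_C(k)}$ that arise in Bridgeland's description of $\del U(X)$ in Theorem \ref{U_boundary}. So it suffices to show that for the given $(-2)$-curve $C$ and the given $k\in\IZ$ there really is a general boundary point of $U(X)$ of type $(C_k)$. (If one prefers a self-contained argument, one can instead cross the resulting wall and check, via Corollary \ref{stab_eq_criterion}, that the objects $T_{\ko_C(k)}(\ko_x)$ — which equal $\ko_x$ for $x\notin C$ and are the wall-crossing transforms of $\ko_x$ for $x\in C$ — are all stable of one phase on the far side; this is the same computation packaged differently.)

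To produce the boundary point I would argue on the explicit stability conditions $\sigma(\beta,\omega)$ of Theorem \ref{Existence}. Since $C\cong\IP^1$ with $C^2=-2$, the Mukai vector is $v(\ko_C(k))=(0,[C],k+1)$ with $v(\ko_C(k))^2=[C]^2=-2$, so $\ko_C(k)$ is spherical, and $Z(\ko_C(k))=[C].\beta-(k+1)+i\,[C].\omega$ while $Z(\ko_x)=Z(v_0)=-1$. Because $[C].\omega>0$ for every $\omega\in Amp(X)$, these two central charges can align only in the limit $[C].\omega\to 0$, i.e. as $\omega$ tends to the face $F=[C]^\perp\cap\overline{Amp}(X)$ of the ample cone; by Remark \ref{U_boundary_rem} this limit sits on the period wall $W_C([C],v_0)$ of Definition \ref{wall_notation}. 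I would therefore fix $\beta\in NS(X)_\IR$ chosen so that the shifted spherical object $\ko_C(k)[1]$ is the unique one whose phase matches that of $\ko_x$ in the limit (this is governed by $\lfloor[C].\beta\rfloor$), and choose $\omega_0$ in the relative interior of $F$ with $\omega_0^2>2$ — such $\omega_0$ exists since $[C]^2<0$ forces $[C]^\perp$ to meet the positive cone in classes of arbitrarily large square. Letting $\omega\to\omega_0$ through ample classes, the $\sigma(\beta,\omega)$ converge in $\overline{V}_{>2}(X)\subset\overline{U}(X)$ (Lemma \ref{sigma_extension_lemma}) to a point $\sigma_0$ lying over $W_C$; as $\kl(X)$ avoids the $C$-walls, $\sigma_0\notin U(X)$, so in fact $\sigma_0\in\del U(X)$.

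By the local finiteness of the wall-and-chamber structure \cite[Lem.\ 11.1]{BridgelandK3}, a generic choice of the interior point $\omega_0$ of $F$ and of $\beta$ makes $\sigma_0$ lie on this single wall and on no other $W_C(l',v_0)$ or $W_A$-wall; hence $\sigma_0$ is a general boundary point, and Theorem \ref{U_boundary} together with the alignment computed above forces it to be of type $(C_k)$ for precisely our $C$ and $k$. Consequently $T_{\ko_C(k)}$ is one of the generators of $\WT(X)$, so it respects the distinguished component by Remark \ref{WT_Dagger}; since $k$ was arbitrary (every integer occurs as $\beta$ varies), the proposition follows. The main obstacle is the bookkeeping that pins the integer $k$ exactly to Bridgeland's normalization of the $(C_k)$-strata — i.e. verifying that the integer part of $[C].\beta$ indeed selects which $\ko_C(k)[1]$ becomes the destabilizing stable factor of $\ko_x$ for $x\in C$ — and confirming that this strictly-semistable locus is reached from inside $\kl(X)$ exactly along $W_C$ rather than at the deeper stratum where $[C].\beta\in\IZ$.
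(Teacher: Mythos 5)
Your proposal is correct and follows essentially the same route as the paper: both produce a nonempty general boundary point of $U(X)$ of type $(C_k)$ by degenerating $\omega$ to a general point of the face $C^\perp\cap\overline{Amp}(X)$ with square $>2$ while tuning $\beta$ so that $\beta.C+k\in(-1,0)$, and then conclude via Remark \ref{WT_Dagger} and Bridgeland's Theorem \ref{U_boundary}. The only difference is presentational: the paper carries out explicitly the two points you defer to genericity and ``bookkeeping'' --- it writes down the conditions on $\beta$ guaranteeing avoidance of the $D(\delta)$- and $A$-type walls, and pins down the integer via the destabilizing sequence $0\to\ko_C(k+1)\to\ko_x\to\ko_C(k)[1]\to 0$, whose factors lie in the heart exactly when $-1\le\beta.C+k<0$, confirming your claim that $\lfloor\beta.C\rfloor$ selects the stratum.
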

\begin{proof}
  We will show that every pair $(C,k)$ does define a non-empty boundary component of 
  $U(X)$ of type $(C_k)$. Then \cite[Thm.\ 12.1]{BridgelandK3} shows that 
  ${T_{\ko_C(k)}}_* \overline{U}(X) \cap \overline{U}(X) \neq \emptyset$ 
  and therefore $T_{\ko_C(k)}$ respects the distinguished component, cf.\ Remark \ref{WT_Dagger}.

  Every $(-2)$-curve $C$ defines a boundary component of the ample cone,
  i.e. there is a class $\eta \in \overline{Amp(X)}$ such that $C.\eta = 0$ 
  and $C'.\eta > 0$ for all other $(-2)$-curves $C'$.
  Multiplying with a positive number we can assume that $\eta^2>2$.

  We claim that there is always a  $\beta \in NS(X)_\IR$ such that 
  \begin{enumerate}
  \item $exp(i \eta + \beta).\delta \neq 0$ for all $\delta \in \Delta(X)$, 
    i.e. $exp(i \eta + \beta) \in \kp_0^+(X)$.
  \item $exp(i \eta + \beta).\delta \notin \IR_{\leq 0}$ for all $\delta \in \Delta^{>0}(X)$ and
  \item $\beta.C + k \in (-1,0)$.
  \end{enumerate}     
  Indeed, for $\delta=(r,l,s)$ we have
  \[ Im(exp(i \eta + \beta).\delta)=l.\eta - r \beta.\eta. \]
  This number is non-zero if $r \neq 0$ and $\beta.\eta \neq l.\eta/r$.
  If $r=0$, then $\delta^2=l^2=-2$ and $exp(i \eta + \beta).\delta=l.\eta=0$ implies that
  $l = \pm C$ by construction of $\eta$.
  In this case $Re(exp(i \eta + \beta).\delta)=\pm \beta.C - s$ is nonzero if $(3)$ is 
  fulfilled. Thus it suffices to chose $\beta$ in such a way that the countably many inequalities
  $\beta.\eta \neq l.\eta/r, \, l\in NS(X)$ and the open condition $(3)$ hold. 
  This shows the claim.

  Let $\sigma$ be the unique stability 
  condition in $\overline{U}(X)$ with central charge $\exp(i \eta + \beta)$.
  Note that $\sigma$ does not lie on a boundary component of type $(A^{\pm})$ by $(2)$.
  By construction, if $x \in C$ then $\ko_x$ is destabilized by a sequence
  \[ 0 \lra \ko_C(n+1) \lra \ko_x \lra \ko_C(n)[1] \lra 0. \]
  Thus  $\sigma$ is a general point of a boundary component of $U(X)$ of type $(C_n)$ 
  for some $n$. The number $n$ is uniquely determined by the property that
  \[ Z_{\sigma}(\ko_C(n)[1]), \; Z_{\sigma}(\ko_C(n+1)) \in \IH \cup \IR_{<0}\]
  which has to hold since $\ko_C(n+1)$ and $\ko_C(n)[1]$ lie in the heart of $\sigma$.
  This is equivalent to $-1 \leq \beta.C + n < 0$. Hence $k=n$ by condition $(3)$.
\end{proof}

\begin{rem}
  The general question if for a spherical object $A \in \cd^b(X)$ the equivalence
  $T_A$ respects the distinguished component remains open -- even in the case that 
  $A$ is a vector bundle. 
\end{rem}

%%% Local Variables: 
%%% mode: latex
%%% TeX-master: "Cusps"
%%% End: 

\subsection{Auto-equivalences and the K\"ahler moduli space}

It was shown by \cite{HLOY2004},\cite{PloogPHD} and \cite{HuybrechtsMacriStellariOrientation2009}
(cf.\  Theorem \ref{orientation})  that the image of the map
\[ Aut(\cd^b(X)) \lra O_{Hodge}(\HT(X,\IZ)) \]
is the index two subgroup $O^+_{Hodge}(\HT(X,\IZ))$. 

\begin{prop}\label{surj_of_aut}
  Let $Aut^\dagger(\cd^b(X)) \subset Aut(\cd^b(X))$ be the subgroup of
  auto-equivalences which respect the distinguished component. 
  Then 
  \[ Aut^\dagger(\cd^b(X)) \lra O_{Hodge}^+(\HT(X,\IZ)) \]  
  is surjective.
\end{prop}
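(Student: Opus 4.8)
The plan is to turn the statement into a generation problem and then lift a chosen set of generators by the equivalences already shown to respect the distinguished component. By Theorem~\ref{orientation} the homomorphism $Aut(\cd^b(X))\lra O_{Hodge}^+(\HT(X,\IZ))$ is onto, and since the condition $\Phi_*Stab^\dagger(X)=Stab^\dagger(X)$ is stable under composition and inversion, $Aut^\dagger(\cd^b(X))$ is a subgroup and its image in $O_{Hodge}^+(\HT(X,\IZ))$ is a subgroup. Hence it suffices to produce, for a generating set of $O_{Hodge}^+(\HT(X,\IZ))$, lifts lying in $Aut^\dagger(\cd^b(X))$. The equivalences of Lemma~\ref{easy_respect} (shifts, automorphisms of $X$, line bundle twists), the spherical twists of Propositions~\ref{spherical_respect} and~\ref{curve_respect}, and the Fourier--Mukai transforms of Proposition~\ref{sheaf_respect} all lie in $Aut^\dagger(\cd^b(X))$, so the entire problem reduces to the lattice-theoretic assertion that the isometries induced by these equivalences generate $O_{Hodge}^+(\HT(X,\IZ))$.

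I would organise the generators around the isotropic vector $v_0=(0,0,1)$. Given $g\in O_{Hodge}^+(\HT(X,\IZ))$, put $w=g(v_0)$, again a standard vector; as $g$ maps into $\Gamma_X^+$, $w$ lies in the $\Gamma_X^+$-orbit of $v_0$, so by Ma's Theorem~\ref{Ma_Thm} the cusp $[w]=[v_0]$ is the large volume limit and the Fourier--Mukai partner $X(w)$ is isomorphic to $X$. Using the shift $[1]$ (inducing $-\mathrm{id}$), line bundle twists (inducing $B$-field shifts) and the spherical twist $T_{\ko_X}$ along the Gieseker-stable line bundle $\ko_X$ (Proposition~\ref{spherical_respect})---all in $Aut^\dagger(\cd^b(X))$---one first normalises $w$ to a standard vector $w'$ of positive rank and positive slope. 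For a $w'$-generic polarisation, $M=M_h(w')$ is a fine, compact, two-dimensional moduli space of Gieseker-stable sheaves (fineness because $\mathrm{div}(w')=1$), hence a K3 surface isomorphic to $X(w')\isom X$; by Proposition~\ref{StableFamilies} its members are stable for a condition in $U(X)$, so Proposition~\ref{sheaf_respect} applies and the induced Fourier--Mukai equivalence lies in $Aut^\dagger(\cd^b(X))$, its $\Phi^H$ carrying the point class of $M$ (identified with $v_0$ under $M\isom X$) to $w'$. Composing with the normalising equivalences yields an element of $Aut^\dagger(\cd^b(X))$ inducing an isometry $h$ with $h(v_0)=w$; then $h^{-1}g$ fixes $v_0$, and it remains to lift $h^{-1}g$.

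So it remains to handle the stabiliser of $v_0$ in $O_{Hodge}^+(\HT(X,\IZ))$, which is the maximal parabolic whose unipotent radical is the group of $B$-field shifts $\exp(\beta)$, $\beta\in NS(X)$, and whose Levi quotient is the restriction to $v_0^\perp/\IZ v_0\isom H^2(X,\IZ)$, with image $O_{Hodge}^+(H^2(X,\IZ))$. The $B$-field shifts are line bundle twists, hence liftable by Lemma~\ref{easy_respect}, so it is enough to lift generators of the Levi quotient. By the global Torelli theorem \cite{HuybrechtsFM}, $O_{Hodge}^+(H^2(X,\IZ))$ is generated by the $f^*$, $f\in\mathrm{Aut}(X)$, and by the reflections $s_l$ in $(-2)$-classes $l\in NS(X)$. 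The former are induced by the $f_*$ (Lemma~\ref{easy_respect}); each $s_l$ is the image in $O(NS(X))=O(v_0^\perp/\IZ v_0)$ of a reflection in $W^0(N(X),v_0)$, and by Proposition~\ref{boundary_lemma} the map $\WT(X)\lra W^0(N(X),v_0)$ is surjective with $\WT(X)\subset Aut^\dagger(\cd^b(X))$. As these liftable elements contain the unipotent radical and surject onto the Levi quotient, they generate the whole stabiliser; combined with the previous paragraph this exhibits every $g$ in the image of $Aut^\dagger(\cd^b(X))$.

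The main obstacle is the normalisation step of the second paragraph: realising $v_0\mapsto w$ by a Fourier--Mukai transform genuinely meeting the hypotheses of Proposition~\ref{sheaf_respect}. One must use $[1]$, line bundle twists and $T_{\ko_X}$ to reach a representative $w'$ of positive rank and positive slope so that Proposition~\ref{StableFamilies} applies, invoke the genericity of $h$ and $\mathrm{div}(w')=1$ to ensure $M_h(w')$ is a smooth, fine, two-dimensional moduli space of Gieseker-stable sheaves, and keep track of the sign and of the action on the discriminant group so that the identities $h(v_0)=w$ and ``$h^{-1}g$ fixes $v_0$'' hold exactly rather than up to a further stabiliser element. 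The remaining ingredients---the parabolic decomposition of the stabiliser and the Torelli-theoretic generation of $O_{Hodge}^+(H^2(X,\IZ))$ by automorphisms and $(-2)$-reflections---are classical; the only subtlety there is to check that every such reflection is hit by $W^0(N(X),v_0)$ and therefore by $\WT(X)$.
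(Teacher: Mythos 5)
Your proposal is correct, and its outer frame---$Aut^\dagger(\cd^b(X))$ is a subgroup, so it suffices to lift a generating set of $O_{Hodge}^+(\HT(X,\IZ))$ by equivalences already known to respect $Stab^\dagger(X)$---is exactly the paper's. The genuine difference is how the generating set is obtained. The paper simply cites \cite[Cor.\ 10.13]{HuybrechtsFM}: every element of $O_{Hodge}^+(\HT(X,\IZ))$ is a composite of isometries induced by line bundle twists, pushforwards $f_*$, Fourier--Mukai transforms from fine two-dimensional moduli spaces, $T_{\ko_X}$ and $T_{\ko_C}$; it then applies Lemma \ref{easy_respect} and Propositions \ref{sheaf_respect}, \ref{spherical_respect}, \ref{curve_respect} to these five types. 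You instead re-prove that generation statement: you move $v_0$ to $w=g(v_0)$ by normalizing equivalences plus a moduli-space Fourier--Mukai transform (the same mechanism as the paper's own Proposition \ref{reduction_to_lvl}, which is not invoked in the paper's proof of this proposition), and you split the stabilizer of $v_0$ into the unipotent part of B-field shifts and a Levi part $O^+_{Hodge}(H^2(X,\IZ))$ handled by strong Torelli and the Weyl group. What your route buys is self-containedness: it makes explicit that the only external inputs are Mukai's moduli theory (including fineness for divisibility one, which the paper also uses implicitly), the strong Torelli theorem, and standard hyperbolic reflection-group facts---the same citation level as the paper's single reference. What it costs is length and several compatibilities that you must, and essentially do, verify: that an element of $O^+(\HT(X,\IZ))$ fixing $v_0$ restricts to an isometry of $v_0^\perp/\IZ v_0$ preserving the positive-cone component (this follows from the equivariance of the tube-domain model, Lemma \ref{equivariance}); fineness of $M_h(w')$; and the conjugacy $s_l=w\,s_C\,w^{-1}$, $w\in W_X$, reducing reflections in arbitrary $(-2)$-classes to nodal ones. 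Two smaller remarks: your caveat about ``sign and the action on the discriminant group'' is unnecessary, since $h(v_0)=w$ holds on the nose by construction (each factor's effect on the relevant isotropic vector is pinned down exactly, not just up to the stabilizer); and you need not lean on the remark-level surjectivity of $\WT(X)\ra W^0(N(X),v_0)$---the reflection $\mathrm{id}_U\oplus s_C$ you need is induced directly by $(\ko_X((k{+}1)C)\tensor(-))\circ T_{\ko_C(k)}$, because $(T_{\ko_C(k)})^H=\exp(-(k{+}1)C)\circ(\mathrm{id}_U\oplus s_C)$ and line bundle twists are covered by Lemma \ref{easy_respect} and Proposition \ref{curve_respect}.
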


\begin{proof}
  As explained in \cite[Cor.\ 10.13.]{HuybrechtsFM} every element of $O_{Hodge}^+(\HT(X,\IZ))$
  is induced by the composition of derived equivalences of the following type.
  \begin{enumerate}
    \item Line bundle twists: For $L \in Pic(X)$, the functor $L \tensor \_ \in Aut(\cd^b(X))$.
    \item For isomorphisms $f: X \ra Y$, the functor $f_*: \cd^b(X) \ra \cd^b(Y)$.
    \item For fine, compact, two-dimensional moduli spaces $M$ of Gieseker-stable sheaves with 
      universal family $\ke$, the Fourier--Mukai transform
      \[ FM(\ke): \cd^b(M) \ra \cd^b(X). \]
    \item Spherical twists along $\ko_X$.
    \item Spherical twists along $\ko_C$ for a $(-2)$-curve $C \subset X$.
  \end{enumerate}
  All these equivalences do respect the distinguished component due to our
  Lemma \ref{easy_respect} for (1),(2), Proposition \ref{sheaf_respect} for (3), Proposition \ref{spherical_respect} for (4) and
  Proposition \ref{curve_respect} for (5).
\end{proof}

This result enables us prove the alternative description of the K\"ahler moduli space 
using the stability manifold, alluded to in Remark \ref{KM_stab_constr}.
We use the notation from Section 2 and 3.

\begin{cor}\label{double_quot}
  We have
  \[    Aut^\dagger(\cd^b(X)) \setminus Stab^\dagger(X) / \GLT \isom KM_0(X) \]
  where $KM_0(X)=\Gamma_X \setminus \pd_0(N(X))  \subset KM(X)$.
\end{cor}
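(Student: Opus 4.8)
The plan is to derive the double quotient in two stages, exploiting that the Galois cover $\pi \colon Stab^\dagger(X) \to \kp_0^+(X)$ of Theorem \ref{stability_theorem} is equivariant for both the commuting left $Aut^\dagger$-action and right $\GLT$-action. First I would quotient by the deck group. By Theorem \ref{stability_theorem} the covering $\pi$ is Galois with group $Aut_0^\dagger(\cd^b(X))$, so it induces a homeomorphism $Aut_0^\dagger(\cd^b(X)) \setminus Stab^\dagger(X) \isom \kp_0^+(X)$. The subgroup $Aut_0^\dagger(\cd^b(X))$ is precisely the kernel of $\Phi \mapsto \Phi^H$ on $Aut^\dagger(\cd^b(X))$, hence normal, and $\pi$ is equivariant: the action of $\Phi$ sends the central charge $z$ to $\Phi^H(z)$, while $g \in \GLT$ acts on $z$ through its image $\bar g \in \GL$. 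Since $Aut_0^\dagger(\cd^b(X))$ acts trivially on $N(X)$, passing to the $Aut_0^\dagger$-quotient the residual left action on $\kp_0^+(X)$ is the linear action of the image $G$ of $Aut^\dagger(\cd^b(X))$ in $O(N(X))$, and the right $\GLT$-action descends to the $\GL$-action. Thus
\[ Aut^\dagger(\cd^b(X)) \setminus Stab^\dagger(X) / \GLT \isom G \setminus \big( \kp_0^+(X)/\GL \big). \]

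The decisive point is to identify $G$, and here I would invoke Proposition \ref{surj_of_aut}: the map $Aut^\dagger(\cd^b(X)) \to O^+_{Hodge}(\HT(X,\IZ))$ is surjective. Since the action of $O^+_{Hodge}(\HT(X,\IZ))$ on $\kp_0^+(X) \subset N(X)_\IC$ factors through its image $\Gamma_X^+$ in $O(N(X))$, this gives $G = \Gamma_X^+$. I expect this to be the main obstacle of the corollary: Proposition \ref{surj_of_aut} is exactly what guarantees that no auto-equivalences are missing, and it in turn rests on the orientation theorem \ref{orientation} together with the explicit families of equivalences shown to respect $Stab^\dagger(X)$ in the appendix. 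Without the surjectivity one would only quotient by the \emph{a priori} smaller image of $Aut^\dagger(\cd^b(X))$, producing a cover of $KM_0(X)$ rather than $KM_0(X)$ itself.

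It remains to carry out the $\GL$-quotient and check compatibility. By Lemma \ref{grass_lemma} the map $\pl \colon \kp_0^+(X) \to \pd_0^+(X)$ realizes $\pd_0^+(X)$ as the quotient of $\kp_0^+(X)$ by the free $\GL$-action, so $\kp_0^+(X)/\GL \isom \pd_0^+(X)$; since $\GLT \to \GL$ is surjective, quotienting by $\GLT$ coincides with quotienting by $\GL$. The $\Gamma_X^+$-action (on the lattice factor) and the $\GL$-action (on the $\IR^2$-factor under $N(X)_\IC \isom N(X) \tensor_\IZ \IR^2$) commute, so the two quotients may be taken in either order, yielding
\[ Aut^\dagger(\cd^b(X)) \setminus Stab^\dagger(X) / \GLT \isom \Gamma_X^+ \setminus \pd_0^+(X) = KM_0(X), \]
the last identification being the definition of $KM_0(X)$ recorded in Remark \ref{KM_stab_constr}. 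The only routine verifications left are that the group actions are properly discontinuous, so that the iterated quotient agrees with the quotient by the combined action and defines the asserted orbifold; this follows from the arithmeticity of $\Gamma_X^+$ and the freeness of the $\GLT$-action on the distinguished component.
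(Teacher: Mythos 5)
Your proposal is correct and follows essentially the same route as the paper: quotient by the deck group $Aut_0^\dagger(\cd^b(X))$ via the Galois cover of Theorem \ref{stability_theorem}, use $\GLT$-equivariance to descend to the $\GL$-quotient $\kp_0^+(X)/\GL \isom \pd_0^+(X)$, and invoke Proposition \ref{surj_of_aut} to identify the residual group acting on $\pd_0^+(X)$ with $O^+_{Hodge}(\HT(X,\IZ))$, hence with $\Gamma_X^+$. The only difference is the order in which the $\GL$-quotient and the identification of the residual group are performed, which is immaterial since, as you note, the two actions commute.
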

\begin{proof}
  Recall that $Aut_0^\dagger(\cd^b(X))$ is the subgroup of $Aut^\dagger(\cd^b(X))$ of 
  auto-equivalences acting trivially on $\HT(X,\IZ)$.
  By \cite[Thm.\ 1.1]{BridgelandK3} (cf.\ Theorem \ref{stability_theorem}) the quotient 
  $Aut^\dagger_0(\cd^b(X)) \setminus Stab^\dagger(X)$ is identified with the period 
  domain $\kp_0^+(X) \subset N(X)_\IC$ via $\pi: Stab^\dagger(X) \ra \kp_0^+(X)$. 
  As $\pi$ is $\GLT$-equivariant, we have 
  \[ Aut^\dagger_0(\cd^b(X)) \setminus Stab^\dagger(X) / \GLT \isom \kp_0^+(X)/\GL \isom \pd_0^+(X). \]
  Now Proposition \ref{surj_of_aut} shows that 
  \[ Aut^\dagger(\cd^b(X)) \setminus \pd_0^+(X) \isom O_{Hodge}^+(\HT(X,\IZ))\setminus \pd_0^+(X) \isom KM_0(X).
  \qedhere \]
\end{proof}

%%% Local Variables: 
%%% mode: latex
%%% TeX-master: "Cusps"
%%% End: 

\subsection{Reduction to the large volume limit}\label{sec:reduction_to_lvl}

As another consequence we obtain the following proposition which allows us to reduce 
many statements about objects with a standard Mukai vector $v$ (cf.\ Definition \ref{def_standard_vector})
to the special case $v=v_0=(0,0,1) \in N(X)$, which is the class of a point sheaf.

\begin{prop}\label{reduction_to_lvl}
  Let $v \in N(X)$ be a standard vector. 
  Then there is a K3 surface $Y$ and a derived equivalence $\Phi:\cd^b(X) \ra \cd^b(Y)$ such that
  \[  \Phi^H(v) = v_0 \]
  and $\Phi$ respects the distinguished component.

  Moreover, $Y$ is a fine moduli space of Gieseker-stable sheaves and the Hodge 
  structure $H^2(Y,\IZ)$ isomorphic to the subquotient $v^\perp/v$ of $\HT(X,\IZ)$.
\end{prop}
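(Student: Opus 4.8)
The plan is to produce $\Phi$ as a composite $\Phi=\Phi_M^{-1}\circ\Psi\colon\cd^b(X)\ra\cd^b(Y)$, where $\Psi\in Aut^\dagger(\cd^b(X))$ is an auto-equivalence that puts the standard vector into a convenient shape, and $\Phi_M$ is the Fourier--Mukai equivalence attached to a moduli space of sheaves. More precisely, I would first replace $v$ by an isometric vector $v'=\Psi^H(v)$ of \emph{positive rank} using equivalences that respect the distinguished component, then take $Y=M_h(v')$, a fine two-dimensional moduli space of Gieseker-stable sheaves on $X$ for a $v'$-generic polarization $h$. Its universal family $\ke\in Coh(X\times Y)$ induces $\Phi_M\colon\cd^b(Y)\sra\cd^b(X)$ with $\Phi_M(\ko_{[E]})=E$, hence $\Phi_M^H(v_0)=v(E)=v'$. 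Then $\Phi^H(v)=(\Phi_M^{-1})^H(\Psi^H(v))=(\Phi_M^{-1})^H(v')=v_0$, as required.

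For the normalization step I would run a short case analysis on $v=(r,l,s)$, using $v^2=l^2-2rs=0$ and divisibility one. If $r\neq0$, apply the shift $[1]$ (acting by $w\mapsto -w$) if necessary so that $r>0$. If $r=0$ and $s\neq0$, recall that $T_{\ko_X}^H$ is the reflection in $v(\ko_X)=(1,0,1)$, which acts by $(r,l,s)\mapsto(-s,l,-r)$; applied to $v$ this gives a vector of nonzero rank, reducing to the previous case (this branch already covers $v=\pm v_0$). Finally, if $r=s=0$ then $l\neq0$ by primitivity, and tensoring by a line bundle $L$ sends $(0,l,0)$ to $(0,l,l.c_1(L))$; since the pairing on $NS(X)$ is non-degenerate we may pick $L$ with $l.c_1(L)\neq0$, again reducing to a case with nonzero rank. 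All equivalences used here respect the distinguished component by Lemma \ref{easy_respect} (shifts and line-bundle twists) and Proposition \ref{spherical_respect} (the twist $T_{\ko_X}$ along the spherical bundle $\ko_X$), so the composite $\Psi$ lies in $Aut^\dagger(\cd^b(X))$, and being an isometry it preserves both primitivity and divisibility, whence $v'$ is again a standard vector.

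For the moduli step I would invoke the theory of moduli of sheaves on K3 surfaces. Since $v'$ is primitive with positive rank and $(v')^2=0$, for a $v'$-generic ample class $h$ the space $Y=M_h(v')$ is a non-empty, smooth, projective K3 surface of dimension $(v')^2+2=2$, on which Gieseker-semistability coincides with stability (Mukai, Yoshioka). Because $v'$ has divisibility one there is a $w\in N(X)$ with $v'.w=1$, so a universal family exists and $Y$ is a \emph{fine} moduli space; thus the hypotheses of Proposition \ref{sheaf_respect} hold and $\Phi_M$ respects the distinguished component, and therefore so does $\Phi=\Phi_M^{-1}\circ\Psi$. For the last assertion, Mukai's Hodge isometry identifies $H^2(Y,\IZ)$ with $(v')^\perp/\IZ v'$; since $\Psi^H$ is a Hodge isometry carrying $v$ to $v'$ it induces $v^\perp/\IZ v\isom (v')^\perp/\IZ v'$, and composing yields $H^2(Y,\IZ)\isom v^\perp/\IZ v$, in agreement with Remark \ref{fm_criterion} and Ma's Theorem \ref{Ma_Thm}.

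The genuinely substantial input is the moduli step: one must know that, after the reduction, $v'$ is represented by a \emph{non-empty} fine moduli space of Gieseker-stable sheaves for some $v'$-generic polarization. This rests on the non-emptiness and smoothness results for moduli of sheaves on K3 surfaces rather than on anything elementary, and it is the point I expect to require the most care to cite correctly. By contrast, the reduction to positive rank is purely lattice-theoretic and straightforward, the identification of the Hodge structure is a standard consequence of Mukai's work, and the bookkeeping that each functor respects $Stab^\dagger$ is handled entirely by the appendix results quoted above.
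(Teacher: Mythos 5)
Your proposal is correct and follows essentially the same route as the paper: reduce $v$ to a positive-rank vector using shifts, $T_{\ko_X}$ and line-bundle twists (the paper simply cites the proof of \cite[10.10]{HuybrechtsFM} for this case analysis, which you spell out explicitly), then realize the result as a fine two-dimensional moduli space $M_h(v')$ of Gieseker-stable sheaves via \cite[Sec.\ 10.2]{HuybrechtsFM}, with Lemma \ref{easy_respect}, Proposition \ref{spherical_respect} and Proposition \ref{sheaf_respect} guaranteeing that every functor involved respects $Stab^\dagger$. The only cosmetic difference is that you compose with the inverse Fourier--Mukai equivalence to land the vector on $v_0$ explicitly, whereas the paper states the equivalent fact that the universal-family equivalence maps $v_0$ to $v$.
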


\begin{proof}
  Write $v=(r,l,s)$. 
  Applying $T_{\ko_X}$, shifts and $L \tensor \_ $ for a line bundle $L$ as in the 
  proof of \cite[10.10]{HuybrechtsFM} we reduce to the case that that $r>0$.
  Note that the equivalences $T_{\ko_X}$, $[1]$, $L \tensor \_ $ respect the 
  distinguished component by Lemma \ref{easy_respect} and Proposition \ref{spherical_respect}.

  By \cite[Sec.\ 10.2]{HuybrechtsFM} there is an ample class $h \in
  NS(X)$ such that the moduli space of Gieseker-stable sheaves $Y=M_h(v)$ is a K3 surface
  with Hodge structure $H^2(Y,\IZ) \isom v^\perp/v$ as subquotient of $\HT(X,\IZ)$.
  The derived equivalence induced by the universal bundle $\ke$
  respects the distinguished component by Proposition \ref{sheaf_respect}
  and maps $v_0$ to $v$.
\end{proof}

%%% Local Variables: 
%%% mode: latex
%%% TeX-master: "Cusps"
%%% End: 

\bibliographystyle{alpha}
\bibliography{References}

\end{document}